\documentclass[10pt]{amsart}

\usepackage[T1]{fontenc}  
\usepackage[utf8]{inputenc} 
\usepackage{longtable}
\usepackage{amssymb}
\usepackage{amsthm}
\usepackage{amsmath}
\usepackage{wasysym}
\usepackage{dcpic, pictex}
\usepackage{bbm}
\usepackage{enumerate}
\usepackage{amsfonts}
\usepackage{amsthm}
\usepackage{amsmath}
\usepackage{times}
\usepackage{amscd}
\usepackage[mathscr]{eucal}
\usepackage{indentfirst}
\usepackage{pict2e}
\usepackage{epic}
\usepackage{epstopdf} 
\usepackage{verbatim}
\usepackage{cancel}
\usepackage[foot]{amsaddr}
\usepackage{lipsum}
\usepackage{mathrsfs}
\usepackage{bm}

\usepackage{subcaption}
\usepackage{graphicx}

\usepackage{
amsfonts, 
latexsym, 
enumerate,
}
\usepackage[english]{babel}
\allowdisplaybreaks
\makeindex
\newtheorem{theorem}{Theorem}[section]
\newtheorem{lemma}[theorem]{Lemma}
\newtheorem{proposition}[theorem]{Proposition}

\newtheorem{corollary}[theorem]{Corollary}
\newtheorem{remark}[theorem]{Remark}
\newtheorem{assumption}[theorem]{Assumption}
\usepackage[a4paper,top=3.5cm,bottom=3.5cm,left=3cm,right=3cm]{geometry}
\numberwithin{equation}{section}

\allowdisplaybreaks

\usepackage{ bbold }
\usepackage{xcolor}

\newcommand{\nc}{\normalcolor}
\newcommand{\dif}{\mathrm{d}}
\newcommand{\E}{\mathbf{E}}

\newcommand{\R}{\mathbf{R}}
\newcommand{\C}{\mathbf{C}}
\newcommand{\DD}{\mathbf{D}}

\newcommand{\N}{\mathbf{N}}

\newcommand{\Cov}{\mathrm{Cov}}
\newcommand{\Var}{\mathrm{Var}}

\newcommand{\ii}{\mathrm{i}}
\newcommand{\ee}{\mathrm{e}}

\newcommand{\mt}{\mathfrak{t}}

\usepackage{hyperref}

\title[Minor process]{The non-Hermitian minor process}

\date{\today}

\begin{document}

\maketitle

\vspace{0.25cm}

\renewcommand{\thefootnote}{\fnsymbol{footnote}}

\noindent
\mbox{}%
\hfill%
\begin{minipage}{0.19\textwidth}
	\centering
	{Giorgio Cipolloni}\footnotemark[1]\\
	\footnotesize{\textit{cipolloni@axp.mat.uniroma2.it}}
\end{minipage}
\hfill%
\begin{minipage}{0.19\textwidth}
	\centering
	{L\'aszl\'o Erd\H{o}s}\footnotemark[2]\\
	\footnotesize{\textit{lerdos@ist.ac.at}}
\end{minipage}
\hfill%
\begin{minipage}{0.19\textwidth}
	\centering
	{Oleksii Kolupaiev}\footnotemark[2]\\
	\footnotesize{\textit{okolupaiev@ist.ac.at}}
\end{minipage}
\hfill%
\mbox{}%
\footnotetext[1]{University of Rome Tor Vergata, Via della Ricerca Scientifica 1, 00133 Rome, Italy.}
\footnotetext[2]{Institute of Science and Technology Austria, Am Campus 1, 3400 Klosterneuburg, Austria. 
}

\renewcommand*{\thefootnote}{\arabic{footnote}}
\vspace{0.25cm}

\begin{abstract} 
We show that the log-determinant of leading principal minors of large non-Hermitian random matrices converges in distribution to a 2+1 dimensional Gaussian field, which is logarithmically correlated for the parabolic distance, 
reminiscent to the Edwards-Wilkinson universality class.
\end{abstract}
\vspace{0.15cm}

\footnotesize \textit{Keywords:} Local Law, Zigzag Strategy, Dyson Brownian motion, Girko’s Formula, Linear Statistics.

\footnotesize \textit{2020 Mathematics Subject Classification:} 60B20, 82C10.
\vspace{0.25cm}
\normalsize


\section{Introduction}

We consider an $N\times N$ matrix $X$ with independent, identically distributed (i.i.d.) centered entries and its 
\emph{leading principal  minors}, i.e. square sub-matrices in the lower right corner\footnote{We deviate from the usual terminology where leading principal minors refer to minors in the top left corner.}. We study the joint distribution of their eigenvalues through the correlation structure of their linear eigenvalue statistics (see \eqref{eq:linstatmin} below), in particular, we describe how the different minor levels influence the answer. To motivate this line of study, we first summarize the analogous results for the Hermitian case, where there has been a lot of progress. Then, we will explain our result on minors of non-Hermitian random matrices. In particular, the main novelty of this work is to investigate this natural question in the non-Hermitian setting. Somewhat surprisingly, our result is new even for the \emph{Ginibre ensemble}, i.e. for the Gaussian case, where otherwise many explicit formulas are available.

Let now $W$ be a \emph{Wigner matrix}, i.e. an $N\times N$ Hermitian random matrix with i.i.d. centered entries (up to the Hermitian symmetry) with $\E|W_{ij}|^2=N^{-1}$, and denote its eigenvalues by $\lambda_1,\dots, \lambda_N$. It is well known since \cite{wigner1958distribution} that the (random) empirical measure $\frac{1}{N}\sum_i\delta_{\lambda_i}$ converges to a limiting deterministic measure $\mu_{\mathrm{sc}}(\dif x)$ (i.e. the \emph{semicircular distribution}). It is then natural to ask for the size of the fluctuations around this deterministic limit. In \cite{Anderson06, Bai04, Johansson04, Lytova09, Sinai98} (see also \cite{Shcherbina11, Sosoe13} for later results) it was discovered\footnote{In fact, this unusually small fluctuations were discovered earlier by Dyson and Mehta \cite{dyson1963statistical} when studying the fluctuations of the eigenvalues of Haar unitary random matrices (CUE ensemble).} that the fluctuations around $\mu_{\mathrm{sc}}(\dif x)$ are of size $N^{-1}$,  i.e. unusually small. This is due to the very strong correlation among the eigenvalues of $W$, if the eigenvalues were independent or weakly correlated, the usual CLT would predict a much larger fluctuation of order $N^{-1/2}$. The fluctuation, however,  is still Gaussian on this small scale. More precisely, we have
\[
L_N(f):=  \sum_i f(\lambda_i)-\E \sum_i f(\lambda_i)\stackrel{N\to\infty}{\Longrightarrow} \mathcal{N}(0,V_f),
\]
for any regular test function $f$, with an explicit $V_f>0$. In particular, the limiting field can be identified as a one-dimensional logarithmically correlated (or \emph{log-correlated}, in short) field. Instead of a single matrix $W$ it is then natural to consider the family of its leading principal minors\footnote{Here $W^{(k)}$ denotes the $k\times k$ matrix obtained from $W$ by removing its first $N-k$ rows and columns.} $W^{(k)}$ for different $k$'s to study the joint distribution of their eigenvalues, and show that these fluctuations converge to a two-dimensional Gaussian field. In \cite{Bor} Borodin considered the joint spectral statistics of an $N\times N$ Wigner matrix $W=W^{(N)}$ and its minors $W^{(xN)}$, for $x\in (0,1]$, where we assume that $xN$ is an integer for notational simplicity. In this work, Borodin showed that $\sum_i f(\lambda_i^x,x)-\E \sum_i f(\lambda_i^x,x)$, with $\lambda_i^x\in\mathrm{Spec}(W^{(xN)})$ and $x\in (0,1]$, converges to  the two-dimensional \emph{Gaussian Free Field (GFF)}, which can be realized by $h_N(E, x) =\log|\mathrm{det}(W^{(xN)}-E-\ii 0)|$ in the sense that $L_N(f)=\langle h_N, f\rangle_{H^1}$ after a convenient change of variables. Since this fundamental work of Borodin there has been a great interest in joint fluctuations of analogous two-dimensional processes\footnote{We refer to the introduction of \cite{bourgade2024fluctuations} for a detailed description of all the results connecting random matrices and logarithmically correlated fields. Here we focus only on the results concering minors or analogous processes defined for certain $\beta$-ensembles.}, e.g. for sample covariance matrices \cite{Dumitriu18}, $\beta$-Jacobi ensembles \cite{beta_corners15}, the adjacency matrix of random regular graphs \cite{Johnson14, Ganguly20}, and random tilings \cite{borot2026macroscopic} and references therein. We also mention a related line of work studying the difference of linear eigenvalue statistics of a matrix and its  immediate minor obtained by removing one row and column, which was initiated in \cite{beta_corners18} for the $\beta$-Jacobi ensemble and then, motivated by this work, studied for Wigner matrices \cite{Dominik18} and for sample covariance matrices \cite{SampleCov20}. The outcome can be interpreted as convergence to the \emph{derivative} of a GFF. Furthermore, \cite{BorII} studied the joint correlations of eigenvalues of minors for a dynamical version of Wigner matrices giving rise to a three dimensional log-correlated field (with respect to the Euclidean distance) whose certain two dimensional slices are GFF.

While there has been great interest and success in studying the joint eigenvalue fluctuations of minors of Hermitian random matrices, surprisingly, no results are known in the non-Hermitian case, not even for the \emph{Ginibre ensemble}\footnote{We say that an $N\times N$ matrix $X$ belongs to the real/complex Ginibre ensemble if the entries $\sqrt{N}X_{ij}$ are distributed as i.i.d. standard real/complex Gaussians.}, i.e. when the matrix entries are independent Gaussian. While all our results in this paper hold for general non-Hermitian matrices with either real or complex i.i.d. entry distribution, for simplicity of this introduction, here we present only the complex Gaussian model. For the results in full generality, see Section~\ref{sec:results} below. We thus define the \emph{Ginibre minor process}, i.e. we consider an $N\times N$ complex Ginibre matrix $X$ and its leading principal minors\footnote{We again assume that $xN$ is an integer, for notational simplicity.} $X^{(xN)}$, for $x\in (0,1]$. In particular, for any smooth function $f:\C\to\C$,  we study the joint fluctuations of the (centered) linear statistics 
\begin{equation}
\label{eq:linstatmin}
L^{(xN)}(f):=\sum_{i=1}^{xN}f\big(\sigma_i^{(xN)}\big)-\E\sum_{i=1}^{xN}f\big(\sigma_i^{(xN)}\big),
\end{equation}
where $\sigma_i^{(xN)}\in \mathrm{Spec}(X^{(xN)})$, for different $x\in (0,1]$. 

\begin{figure}[h]
\begin{center}
\begin{subfigure}{0.45\textwidth}
\includegraphics[height=7cm]{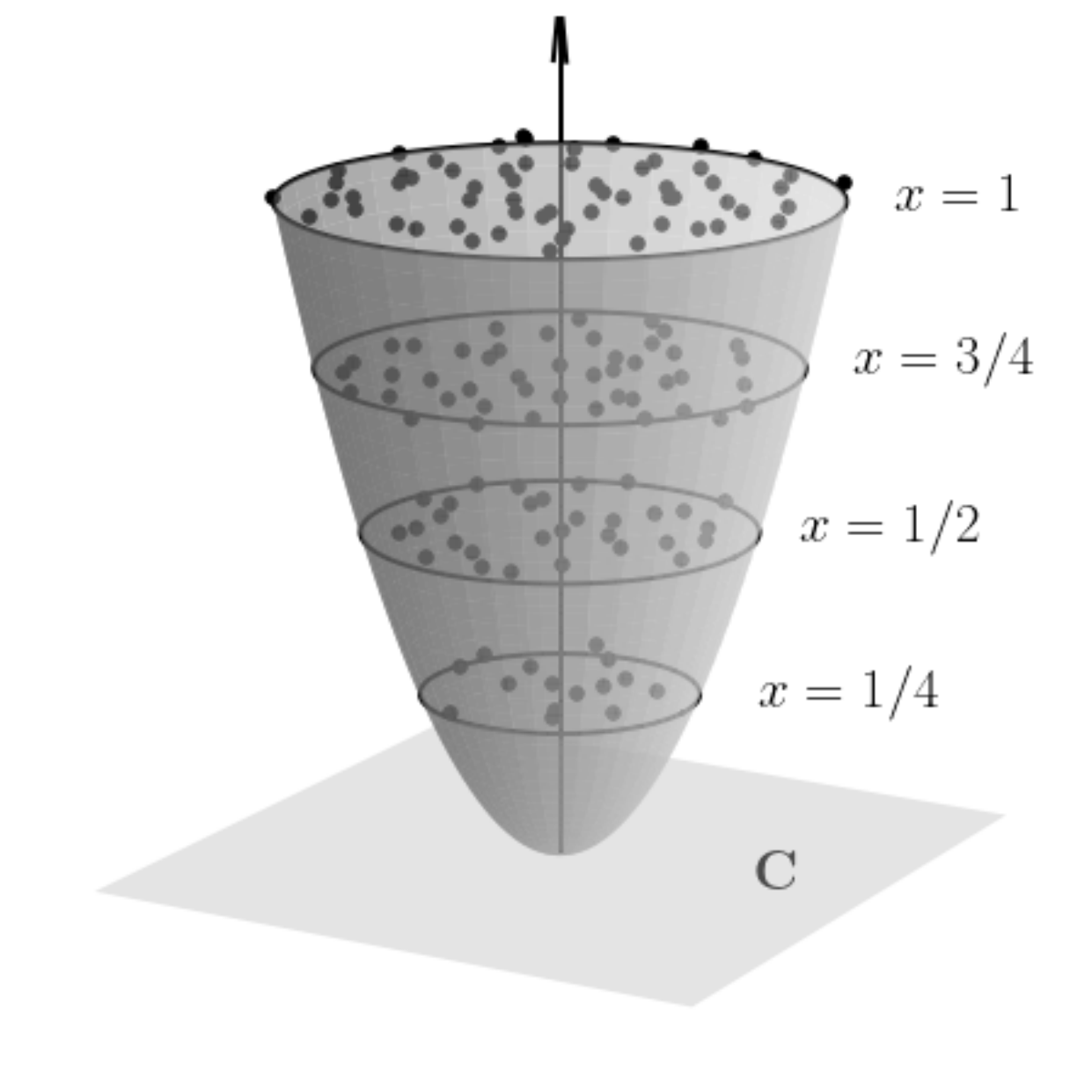}
\caption{}
\end{subfigure}%
\hfill
\begin{subfigure}{0.45\textwidth}
\includegraphics[height=7cm]{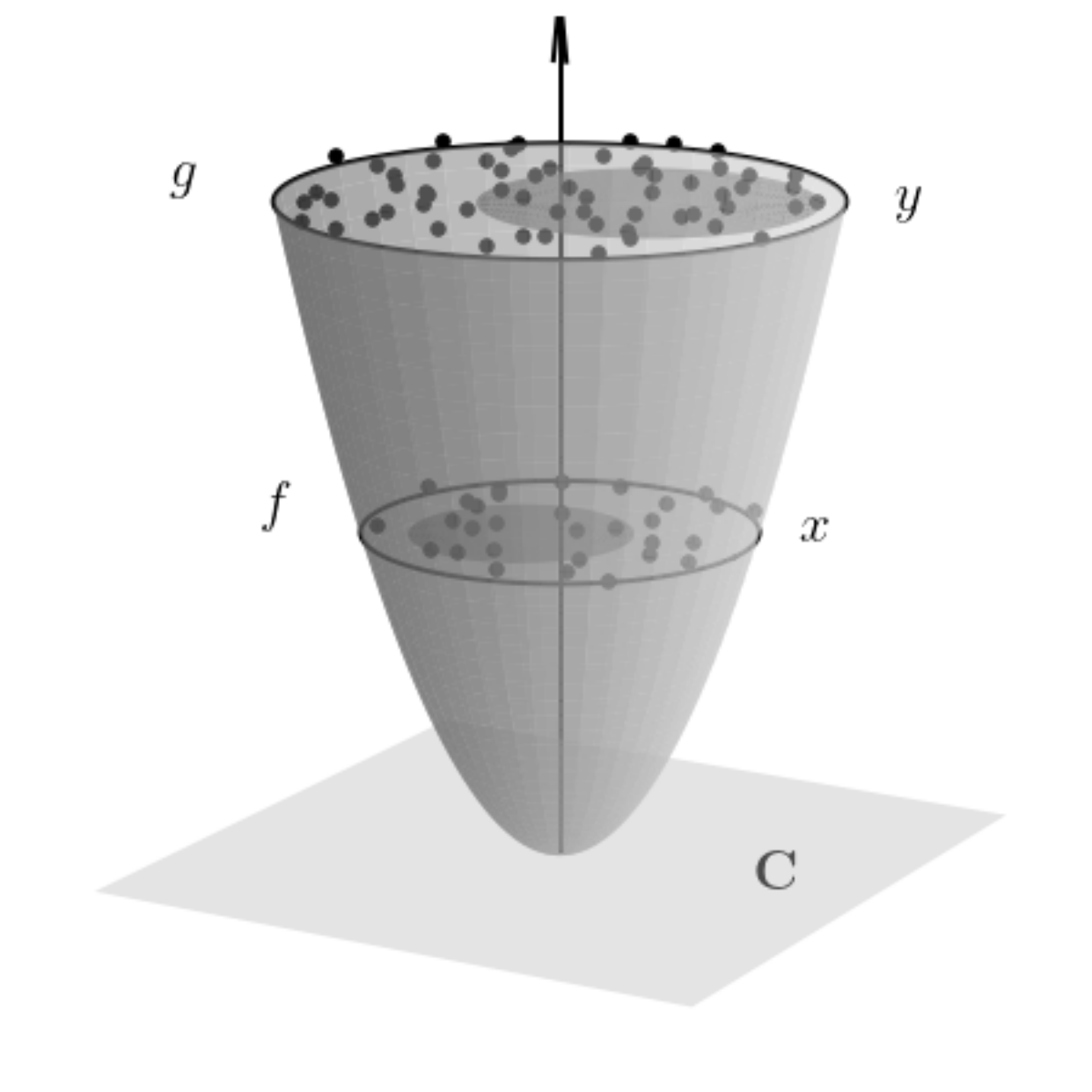}
\caption{}
\end{subfigure}
\end{center}
\caption{In the left panel we depict the eigenvalues of $X^{(xN)}$ at several levels of the non-Hermitian minor process. By Girko's circular law, almost all of these eigenvalues are contained inside of the parabolic surface $|z|^2=x$. The right panel illustrates the set-up of Theorem~\ref{theo:intro}. The dark grey disks chosen at the levels $x$ and $y$ with $x\le y$, represent the supports of the test functions $f$ and $g$, respectively. }
\label{fig:minor_process}
\end{figure}

\begin{theorem}\label{theo:intro}
Consider the complex Ginibre minor process $X^{(xN)}$, and fix any $x,y\in (0,1]$ with $x\le y$. Let $f,g$ be smooth functions supported in the open disks\footnote{Here $\DD_x:=\{z\in\C:|z|\le \sqrt{x}\}$.} $\DD_x$, $\DD_y$, respectively. Then, the linear statistics $L^{(xN)}(f), L^{(yN)}(g)$ converge jointly to centered Gaussian random variables $L(x,f), L(y,g)$, with covariance
\begin{equation}
\E L(x,f) L(y,g)= -\frac{1}{\pi^2} \int_{\DD_x} \!\!\dif^2 z\int_{\DD_y}\!\!\dif^2 w\,\, \overline{\partial_z f(z)}\partial_w g(w) \partial_z\partial_{\overline{w}} \log K\big((x,z),(y,w)\big),
\label{eq:intro_main}
\end{equation}
where $K$ is given by\footnote{Later in \eqref{eq:def_K} we extend the definition of $K$ to general $x,y\in(0,1]$.}
\begin{equation}
K\big((x,z),(y,w)\big):=|z-w|^2+|x-y|\left(1-\frac{|w|^2}{y}\right), \qquad z\in \DD_x, \; w\in \DD_y.
\end{equation}
\end{theorem}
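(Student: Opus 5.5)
We plan to follow the standard route for CLTs of linear statistics of non-Hermitian matrices: Girko's formula, a reduction to resolvents of Hermitizations, and a two-resolvent local law for minors. For each level $x$ and spectral parameter $z$ write
\[
H^{z}_x:=\begin{pmatrix}0 & X^{(xN)}-z\\ (X^{(xN)}-z)^* & 0\end{pmatrix},\qquad G^z_x(\ii\eta):=(H^z_x-\ii\eta)^{-1}.
\]
Girko's formula gives
\[
L^{(xN)}(f)=-\frac{1}{4\pi}\int_{\C}\Delta f(z)\int_0^\infty \big(\mathrm{Tr}\,G^z_x(\ii\eta)-\E\,\mathrm{Tr}\,G^z_x(\ii\eta)\big)\,\dif\eta\,\dif^2z,
\]
and similarly for $g$ at level $y$. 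Since $f$ is supported strictly inside $\DD_x$, $z$ is in the bulk of $X^{(xN)}$; we use that the rescaled matrix $x^{-1/2}X^{(xN)}$ is again i.i.d.\ with variance $1/(xN)$. We split the $\eta$-integral into three regimes. The regime $\eta\le N^{-1-\epsilon}$ is negligible by lower tail bounds on the smallest singular value. The mesoscopic regime $N^{-1-\epsilon}\le\eta\le N^{-1+\epsilon}$ is negligible in variance. The main regime is $\eta\ge N^{-1+\epsilon}$. All of this is handled exactly as in the single-matrix CLT, level by level, using single-resolvent local laws.

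For the main regime we prove a joint Wick theorem for the centered traces $\langle G^{z}_x(\ii\eta)\rangle-\E$ and $\langle G^{w}_y(\ii\eta')\rangle-\E$ by cumulant expansion, i.e.\ the complex Gaussian integration by parts in the Ginibre case. Higher moments factorize into pairings up to $o(1)$ errors, which yields Gaussianity. The covariance is determined by the deterministic limit of $\langle G^z_x A\, G^w_y B\rangle$. The new ingredient is the coupling of two resolvents of \emph{different} minors. We embed $H^z_x$ into $2N\times 2N$ space by padding with zeros, i.e.\ by projecting onto the last $xN$ coordinates with projection $P_x$. When the expectation is computed, the self-energy operator then sees the overlap of the index sets: the minors share the $xN\times xN$ block, and $X^{(yN)}$ has the extra rows and columns.

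Solving the resulting two-body matrix Dyson equation, with $M$-matrices of each level computed at their own variances, should give
\[
\E\langle G^z_x\rangle\langle G^w_y\rangle_{\mathrm{conn}}\approx \frac{1}{N^2}\,\partial_{\eta}\partial_{\eta'}\log\big(1-\mathcal{V}_{xy}(z,w,\eta,\eta')\big),
\]
where $\mathcal{V}_{xy}$ is the stability factor. In it the overlap weight $x/y$ enters through the fraction of shared entries, and the $u$-components of $M^w_y$ enter as $1-|w|^2/y$. Integrating over $\eta,\eta'$ and then performing two integrations by parts in $z$ and $w$ converts $\Delta f\,\Delta g$ into $\overline{\partial_z f}\,\partial_w g\,\partial_z\partial_{\bar w}\log(\cdot)$. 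The $\eta$-integrated logarithm should be evaluated explicitly and shown to equal $\log K$ up to terms harmonic in $z$ or $w$, which vanish after the derivatives. At $x=y$ this must reduce to $\log|z-w|^2$, recovering the known Ginibre covariance; we use this as a check.

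The main obstacle is the two-resolvent local law for $G^z_x$ and $G^w_y$ at different levels with $\eta,\eta'\ge N^{-1+\epsilon}$. We need it with error small compared to $1/(N|\eta\eta'|)$, uniformly as $|z-w|$ and $|x-y|$ become small. In that limit the stability operator nearly degenerates, with $1-\mathcal V\sim |z-w|^2+|x-y|(1-|w|^2/y)+\eta+\eta'$. We would first try a zigzag/characteristic-flow argument: run an Ornstein–Uhlenbeck flow on the full matrix $X$, which induces consistent flows on all minors, together with the characteristic flow for $(z,\eta)$, and propagate the local law from large $\eta$ down to $N^{-1+\epsilon}$. Gaussian comparison is then unnecessary for Ginibre, but it would be the final step for general entries. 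The explicit computation identifying the limit with $K$ is tedious but routine.
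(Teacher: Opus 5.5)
Your proposal is correct and follows essentially the same route as the paper: Girko's formula with the regime $\eta\le N^{-1+\delta_c}$ discarded via the single-matrix results applied to each rescaled minor; then a cumulant (second-order renormalization) expansion on the smaller-level resolvent, which produces chains $G_1B_1P^*G_2PB_2$; then a two-body stability operator built from $M_1$, $P^*M_2P$ and the self-energy of the smaller minor, whose determinant tends to $K/y$ as $\eta,\eta'\to0$; then a zigzag (Ornstein--Uhlenbeck plus characteristic flow) proof of the two-resolvent local law; and finally $\eta$-integration and two integrations by parts. (The paper obtains the stated theorem as the case $\kappa_4=0$ of its general result, and the boundary term drops out because $f,g$ vanish on $\partial\DD_x,\partial\DD_y$.) One small correction: the local law is only needed with error $N^{\xi}/(N\eta\eta')$, which is what the paper proves, not with error $o\big(1/(N\eta\eta')\big)$, because the prefactor $N^{-2}\partial_{\eta'}$ in the covariance formula already supplies the extra small factor $(N\eta')^{-1}$.
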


See Figure~\ref{fig:minor_process} for the illustration of the set-up of Theorem~\ref{theo:intro}.

In the case $x=y$ the derivatives of $\log K$ in the rhs. of \eqref{eq:intro_main} are understood in the sense of distributions. Note that we have $K\big((x,z),(y,w)\big)\sim d(z,w,|x-y|)$, where $d(z,w,|x-y|):=|z-w|^2+|x-y|$ denotes the parabolic distance between two points $(x,z)$ and $(y,w)$ inside a natural three dimensional cone. This shows that the fluctuation structure of the  \emph{Ginibre minor process} is analogous to the Edwards-Wilkinson universality class describing correlated evolution processes if the minor variable $x$ is interpreted as time. Actually, for the Ornstein-Uhlenbeck dynamics on Ginibre matrices the Edwards-Wilkinson fluctuation was very recently proven in \cite{bourgade2024fluctuations}. While the statement of our current result is reminiscent to this genuinely time evolving model, the interrelation between spectra of minors and time evolving matrices is of completely different origin. In particular, our result shows that the eigenvalue of a Ginibre matrix and its minors are strongly correlated (in fact they form a $2+1$-dimensional log-correlated field). While this strong correlation is natural for Hermitian matrices, where the interlacing property of the eigenvalues holds, this is not at all obvious in the non-Hermitian setting.

For the proof we rely on Girko's Hermitization formula (see \cite{Girko84}, \cite{TaoVu15})
\begin{equation}
\begin{split}
\sum_{\sigma\in\mathrm{Spec}(X)}f(\sigma)&=-\frac{1}{4\pi}\int\Delta f(z)\int_0^\infty \Im \mathrm{Tr} G^z(\ii\eta)\dif \eta\dif^2z, \\
H^z:&=\left(\begin{matrix}
0 & X-z \\
(X-z)^* & 0
\end{matrix}\right), \qquad\quad G^z(\ii\eta):=(H^z-\ii\eta)^{-1},
\end{split}
\label{eq:Girko_intro}
\end{equation}
following the approach paved in \cite{macroCLT_complex}; we refer the interested reader to \cite[Section~3]{macroCLT_complex} for a detailed description of this proof strategy. Compared to \cite{macroCLT_complex}, the current set-up features a new challenge, since we also consider correlations between different levels of the minor process. To illustrate the essence of this novelty, let us compute $\Cov\left(L^{(k_1)}(f_1), L^{(k_2)}(f_2)\right)$ using \eqref{eq:Girko_intro}. We encounter the covariance
\begin{equation}
\Cov\left(\mathrm{Tr} G^{(k_1),z_1}(\ii\eta_1),\mathrm{Tr} G^{(k_2),z_2}(\ii\eta_2)\right),
\label{eq:intro_Cov}
\end{equation}
where the resolvent\footnote{Later in \eqref{eq:def_hermitization} we introduce a more detailed notation for this resolvent.} $G^{(k),z}(\ii\eta)$ is defined as in the second line of \eqref{eq:Girko_intro}, but with $X$ replaced by its minor $X^{(k)}$. This covariance in the regime $\eta_1,\eta_2\gg N^{-1}$ is the central object of our study since it determines the limiting correlation in the rhs. of \eqref{eq:intro_main}. For $k_1=k_2$, \eqref{eq:intro_Cov} can be computed (to leading order) in terms of \emph{averaged two-resolvent chains}, $\mathrm{Tr} [G_1B_1G_2B_2]$, where $G_j:=G^{(k),z_j}(\ii\eta_j)\in\C^{(2k)\times (2k)}$ and $B_1,B_2\in\C^{(2k)\times (2k)}$ are deterministic matrices. This type of quantities are well-controlled by the so called \emph{two-resolvent local laws} (see e.g. \cite{Multi_res_llaws}), which show that $G_1B_1G_2$ concentrates around its deterministic counterpart with very high probability. For general $k_1\neq k_2$, a similar calculation still yields averaged two-resolvent  chains, however $B_1,B_2$ are no longer square matrices, since $G_1$ and $G_2$ are of different dimensions. More precisely, for $k_1<k_2$ we have $B_1=\widetilde{B}_1P^*$ and $B_1=P\widetilde{B}_2$. Here $\widetilde{B}_1$, $\widetilde{B}_2$ are square matrices and $P$ is the structural projection from the space of larger matrices to the space of smaller once. By \emph{structural} we mean that $P$ maps the Hermitization of a larger minor of $X$ to the Hermitization of a smaller one, thereby preserving the structure of the minor process.

One way to handle a resolvent chain of the form $G_1\widetilde{B}_1P^*G_2P\widetilde{B}_2$ is to perform a block-inversion in $G_2$ via the Schur complement formula and rewrite $P^*G_2P$ as a resolvent of a smaller matrix with an additional deformation. For instance, this approach was used in \cite[Section~4]{minor} in the context of Wigner minor process. The main drawback of the Schur complement formula approach  is that the resulting deformation is genuinely non-Hermitian, even if the original setting is Hermitian. In \cite{minor} we only needed an upper bound on a two-resolvent chain which was accessible by a fairly simple direct estimate on the non-Hermitian deformation. However, in the current work we need to compute the leading term.  As a consequence, using the Schur formula would require a second Hermitization step,  effectively doubling the dimension once more and leading to a quite complicated system of $4\times 4$ block matrices. Instead of choosing this path, we prove the local law for $G_1\widetilde{B}_1P^*G_2P\widetilde{B}_2$ by working directly with this type of quantities.  This approach provides a flexible way to handle resolvents of different sizes, and may therefore be useful in other models where such resolvents arise.

\subsection*{Acknowledgement} The research of G. C. is supported by the Italian Ministry of University and Research (MUR) - Fondo Italiano per la Scienza (FIS3) - 2024 Call, project UBLOCO, CUP F53C25000940001, and also partially supported by the MUR Excellence Department Project MatMod@TOV awarded to the Department of Mathematics, University of Rome Tor Vergata, CUP E83C23000330006. Additionally, G.C. thanks INdAM (Istituto Nazionale di Alta Matematica “Francesco Severi”) and the group GNFM. The research of L. E. and O. K. is supported by the ERC Advanced Grant ``RMTBeyond'' No.~101020331.

\subsection*{Notations and conventions}
We set $[k] := \{1, ... , k\}$ for a positive integer $k \in \N$. The sets of real and complex numbers are denoted by $\R$ and $\C$, respectively, while $\mathbf{D}\subset \C$ stands for the open unit disk. For positive quantities $f, g$ we write $f \lesssim g$, $f \gtrsim g$, to denote that $f \le C g$ and $f \ge c g$, respectively, for some $N$-independent constants $c, C > 0$ that depend only on the basic control parameters of the model in Assumption~\ref{ass:chi} below. In informal explanations we sometimes use the notation $f\ll g$, which indicates that $f$ is "much smaller" than $g$.

We use the notation $\langle A \rangle := d^{-1} \mathrm{Tr}(A)$, $d \in \N$, for the normalized trace of a $d \times d$ matrix $A$. Additionally, we denote its transpose by $A^\mt$. Matrix entries are indexed by lower case Roman letters $a, b, c , ... ,i,j,k,... $ from the beginning or the middle of the alphabet. We denote vectors by bold-faced lower case Roman letters $\boldsymbol{x}, \boldsymbol{y} \in \C^{d}$, for some $d \in \N$. Moreover, for vectors $\boldsymbol{x}, \boldsymbol{y} \in \C^{d}$ and a matrix $A\in\C^{d\times d}$ we define
 \begin{equation*}
	\langle \boldsymbol{x}, \boldsymbol{y} \rangle := \sum_{i=1}^d \bar{x}_i y_i\,, 
	\qquad\quad A_{\boldsymbol{x} \boldsymbol{y}} := \langle \boldsymbol{x}, A \boldsymbol{y} \rangle\,. 
\end{equation*}

For a complex variable $z=x+\ii y$ we set
\begin{equation*}
\partial_z:=\frac{\partial_x-\ii\partial_y}{2},\quad \partial_{\overline{z}}:=\frac{\partial_x+\ii\partial_y}{2},\quad \dif z:=\dif x+\ii\dif y,\quad \dif\overline{z}:=\dif x-\ii\dif y,\quad \dif^2 z:=\frac{\ii}{2}\dif z\wedge\dif \overline{z}.
\end{equation*}  

The covariance of two complex-valued random variables $\zeta_1$ and $\zeta_2$ is denoted by
\begin{equation*}
\Cov(\zeta_1,\zeta_2):=\E\left[ (\overline{\zeta_1}-\E\overline{\zeta_1})(\zeta_2-\E\zeta_2)\right].
\end{equation*} 
We further denote the variance of a complex-valued random variable $\zeta$ by $\Var[\zeta]:=\Cov(\zeta,\zeta)$. For a pair of real or complex-valued stochastic processes $X=X(t)$ and $Y=Y(t)$, we denote their covariation process by~$[X,Y]_t$.

Finally, we will use the concept  of \emph{with very high probability},  meaning that for any fixed $D > 0$, the probability of an $N$-dependent event is bigger than $1 - N^{-D}$ for all $N \ge N_0(D)$. We will use the convention that $\xi > 0$ denotes an arbitrarily small positive exponent, independent of $N$. Moreover, we introduce the common notion of \emph{stochastic domination} (see, e.g., \cite{loc_sc_gen}): For two families
\begin{equation*}
	X = \left(X^{(N)}(u) \mid N \in \N, u \in U^{(N)}\right) \quad \text{and} \quad Y = \left(Y^{(N)}(u) \mid N \in \N, u \in U^{(N)}\right)
\end{equation*}
of non-negative random variables indexed by $N$, and possibly a parameter $u$, we say that $X$ is stochastically dominated by $Y$, if for all $\epsilon, D >0$ we have 
\begin{equation*}
	\sup_{u \in U^{(N)}} \mathbf{P} \left[X^{(N)}(u) > N^\epsilon Y^{(N)}(u)\right] \le N^{-D}
\end{equation*}
for large enough $N \ge N_0(\epsilon, D)$. In this case we write $X \prec Y$. If for some complex family of random variables we have $\vert X \vert \prec Y$, we also write $X = \mathcal{O}_\prec(Y)$.

\section{Main results}\label{sec:results}

We consider real ($\beta=1$) or complex ($\beta=2$) \emph{i.i.d. matrices} $X$, i.e. $N\times N$ matrices whose entries $x_{ab}\stackrel{d}{=}N^{-1/2}\chi$ are independent and identically distributed. We impose the following assumptions on the (possibly $N$-dependent) random variable $\chi$.

\begin{assumption}\label{ass:chi}
\emph{(i)} The random variable $\chi$ is real-valued for $\beta=1$ and complex-valued for $\beta=2$. We assume that $\chi$ satisfies $\E \chi=0$, $\E |\chi|^2=1$ and additionally in the complex case $\E\chi^2=0$. For $\beta=1,2$ we also assume the existence of high moments, i.e. that there exist constants $C_p>0$, for any $p\in \N$, such that  
\begin{equation}
\E |\chi|^p\le C_p.
\label{eq:momass}
\end{equation}

\noindent\emph{(ii)} There exist a (small) $\mathfrak{a}>0$ and a (large) $\mathfrak{b}>0$ such that the probability density $\rho_\chi$ of $\chi$ satisfies\footnote{This mild regularity assumption is needed only to control the regime $\eta\le N^{-100}$ in the Girko's formula \eqref{eq:Girko_intro}. This assumption can be easily removed by standard methods introduced in \cite{TaoVu15} (see also \cite[Remark~2.2]{rightmost_ev}) and is kept here to make the presentation simpler.\nc}
\begin{equation}
\rho_\chi \in L^{1+\mathfrak{a}}(\R)\,\,\,\text{for}\,\,\, \beta=1,\quad \rho_\chi \in L^{1+\mathfrak{a}}(\C)\,\,\,\text{for}\,\,\, \beta=2\quad\text{and}\quad \|\rho_\chi\|_{1+\mathfrak{a}}\le N^\mathfrak{b}\,\,\, \text{for}\,\,\, \beta=1,2.
\label{eq:reg_ass}
\end{equation}
\end{assumption}

We call the $N$-independent constants $C_p, \mathfrak{a}$ and $\mathfrak{b}$ appearing in Assumption~\ref{ass:chi} the \emph{model parameters}. In the formulations of our main results, as well as in the intermediate technical results and their proofs, implicit constants in $\mathcal{O}(\cdot)$ and implicit exponents may depend on the model parameters, unless stated otherwise.

For an integer $k\in [N]$, we denote by $X^{(k)}$ the principal minor of $X$ formed by the last~$k$ rows and the last $k$ columns. Denote the eigenvalues of $X^{(k)}$ by $\{\sigma_i^{(k)}\}_{i\in [k]}\subset \C$. Collecting the eigenvalues of $X^{(k)}$ simultaneously for all $k\in[N]$, we obtain the point process $\Xi_N:=\{(k/N,\sigma_i^{(k)})\}_{k,i}$ in $[0,1]\times \C$. In the sequel we consistently denote $x:=k/N$, and use parameters $k\in [N]$ and $x\in [0,1]$ interchangeably. In the case of several parameters $k_i\in[N]$ we set $x_i:=k_i/N$.

For a \emph{test function} $F:[0,1]\times\C\to\C$, the \emph{centered linear statistics} of $\Xi_N$ associated to $F$ is defined by  
\begin{equation}
L_N(F):= \sum_{k\in [N]} \sum_{i\in [k]} F\big(k/N, \sigma_i^{(k)}\big)-\E \sum_{k\in [N]} \sum_{i\in [k]} F\big(k/N, \sigma_i^{(k)}\big).
\end{equation}
The correlation structure of $\Xi_N$ can be understood through the correlations between different linear statistics of this process. In this paper we mainly focus on the special case when $F$ is supported on one \emph{level} of the minor process $(k/N)\times \C$ for some $k\in[N]$. In this case $L_N(F)$ becomes the customary \emph{centered linear eigenvalue statistics} of $X^{(k)}$: 
\begin{equation}
L^{(k)}(f):=L_N(F)\quad\text{with}\quad f:=F(k/N,\cdot).
\end{equation}
Note that this choice of test functions is not restrictive: a general test function $F$ with a genuine 3-dimensional support can be represented as a sum of at most $N$ test functions, each supported on one level. Meanwhile, the precision of our results allows us to accommodate such sums, for more details see Corollary~\ref{cor:general_testfn}.

In this section we state our two main results concerning the \emph{macroscopic} and \emph{mesoscopic} scalings of test functions, respectively. In both regimes we show that centered linear statistics asymptotically have joint Gaussian distribution. In particular, their joint moments are determined by covariances of the form   
\begin{equation}
\Cov\left(L^{(k_1)}(f_1),L^{(k_2)}(f_2)\right),
\label{eq:Cov_intro}
\end{equation}
up to the leading order. 

\subsection{Macroscopic CLT}\label{sec:macroCLT}

First, we discuss the macroscopic regime, when the test functions $f_1,f_2$ in \eqref{eq:Cov_intro} do not depend on $N$. Our main result in this regime is as follows. 

\begin{theorem}[Macroscopic CLT]\label{theo:main} Let $X$ be a complex or real $N\times N$ i.i.d. matrix satisfying Assumption~\ref{ass:chi}. Fix (small) $c_0,\delta>0$ and an open bounded domain $\DD\subset \Omega\subset \C$. Then the following holds.

\emph(i) The process of centered linear statistics $L^{(k)}(f)$ converges to a centered Gaussian process $L(x,f)$ in the sense of moments: 
\begin{equation}
\E \prod_{i=1}^n \left(L^{(k_i)}(f_i)\right)^{p_i} \left(\overline{L^{(k_i)}(f_i)}\right)^{q_i} = \E \prod_{i=1}^n \left(L(x_i,f_i)\right)^{p_i}\left(\overline{L(x_i,f_i)}\right)^{q_i} + \mathcal{O}(N^{-c(\bm{p},\bm{q})})\,\,\,\text{with}\,\,\, x_i:=k_i/N,
\label{eq:main_macro}
\end{equation}
for any $N$-independent $n, p_i,q_i\in\N\cup\{0\}$, and for (possibly $N$-dependent) $x_i\in [c_0,1]$, for $i\in [n]$. The exponent $c(\bm{p},\bm{q})>0$ in \eqref{eq:main_macro} depends only on $\sum_{i=1}^n (p_i+q_i)$, and the implicit constant in $\mathcal{O}(\cdot)$ may depend only on $H^{2+\delta}$-norm of $f_i$, $i\in[n]$, and on the constants from Assumption~\ref{ass:chi}.

\emph{(ii)} The covariance structure of $\{L(x,f)\}$ is given by
\begin{equation}
\Cov\left(L(x_1,f_1),(x_2,f_2)\right)=C_\beta\left((x_1,f_1),(x_2,f_2)\right),
\label{eq:Cov_L}
\end{equation}
where the explicit sesquilinear form $C_\beta$ is defined below in \eqref{eq:C_beta_2}--\eqref{eq:C_beta_1}.
\end{theorem}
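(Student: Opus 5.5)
The plan follows the strategy of \cite{macroCLT_complex}, adapted to several minor levels. By Girko's formula \eqref{eq:Girko_intro} applied to each $X^{(k_i)}$, we write
\[
L^{(k_i)}(f_i)=-\frac{1}{4\pi}\int\Delta f_i(z)\int_0^\infty \Im\big[\mathrm{Tr} G^{(k_i),z}(\ii\eta)-\E \mathrm{Tr} G^{(k_i),z}(\ii\eta)\big]\dif\eta\,\dif^2 z .
\]
We split the $\eta$-integral into the regimes $[0,N^{-1-\epsilon}]$, $[N^{-1-\epsilon},N^{-1+\epsilon}]$, $[N^{-1+\epsilon},T]$ and $[T,\infty)$.

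The tail regime $\eta\ge T$ is negligible by a trivial expansion. The regime of tiny $\eta$ is controlled by the smallest singular value bound; this is where the regularity assumption \eqref{eq:reg_ass} enters, together with tail bounds on small singular values of $X^{(k)}-z$. The intermediate regime around $\eta\sim N^{-1}$ is handled via the single-resolvent local law, up to the edge of $\DD_x$, and a rigidity/smoothing argument. Note that $k_i\ge c_0N$, so each minor is itself an i.i.d. matrix of size $\sim N$ with variance $N^{-1}=x_i/k_i$; after rescaling, all one-level inputs from \cite{macroCLT_complex} apply verbatim with circular law on $\DD_{x_i}$. Thus the moments reduce, up to $N^{-c}$ errors, to moments of
\[
\int\Delta f_i\int_{N^{-1+\epsilon}}^T \langle G^{(k_i),z}(\ii\eta)-\E G\rangle .
\]

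For these regularized quantities we prove Wick's theorem by cumulant expansion: computing $\E\prod_i \langle G_i-\E G_i\rangle$ and applying a Gaussian-type integration by parts in the common entries of $X$. Each contraction involving two resolvents of (possibly different) levels $k_i<k_j$ produces averaged chains of the form $\langle G_i\widetilde B_1P^*G_jP\widetilde B_2\rangle$ and their multi-resolvent analogues, where $P$ is the structural projection. The leading term of the pair covariance is then $\widehat\kappa$ times a deterministic expression built from the deterministic approximation $M_{12}$ of $G_iB P^*G_jP$. Higher contractions and fourth-cumulant corrections are shown to be lower order via a priori bounds on longer chains. The independence structure is preserved across levels because the entries of $X^{(k_i)}$ are a subset of those of $X^{(k_j)}$. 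This yields asymptotic Gaussianity with the error $N^{-c(\bm p,\bm q)}$.

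The main obstacle is the two-resolvent local law for $G_1\widetilde B_1P^*G_2P\widetilde B_2$ with $G_1,G_2$ of different sizes, down to $\eta_1,\eta_2\gg N^{-1}$ and with optimal error in $|z_1-z_2|^2+|x_1-x_2|+\eta$. I would prove it by the zigzag strategy: run an Ornstein--Uhlenbeck flow on the full $X$ (hence consistently on all minors), with characteristic flows of $(z_i,\eta_i)$. I would derive the deterministic system for $M_{12}$ by solving the stability equation in which $P^*M_2P$ appears, and close the bounds on chains of length up to four using master inequalities. Removing the Gaussian component is then done by a Green function comparison.

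Finally, integrating $\langle M_{12}\rangle$ against $\Delta f_1\Delta f_2$ over $\eta_1,\eta_2$ gives $C_\beta$. Solving for $M$ explicitly, the $\eta$-integrals produce $\log K$ as in \eqref{eq:intro_main}, plus the real-symmetry and fourth-cumulant terms for $\beta=1$. This is a lengthy but direct computation.
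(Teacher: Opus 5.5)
There is a genuine gap in how you treat the fourth cumulant. Most of your architecture matches the paper:
\begin{itemize}
\item the small-$\eta$ regime is removed level by level using the one-level theory;
\item the covariance is computed by a cumulant/underline expansion in the entries of the smallest minor;
\item the chains $\langle G_1B_1P^*G_2PB_2\rangle$ are controlled by a zigzag two-resolvent law whose stability bound improves with $|x_1-x_2|$;
\item the $\eta$-integration of $\partial_{\eta_1}\partial_{\eta_2}\log\det$ of the $2\times 2$ stability matrix gives $\log K$;
\item higher moments follow by an inductive Wick step.
\end{itemize}

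\textbf{The gap.} You assert that the leading pair covariance is built from $M_{1\widetilde 2}$ alone and that fourth-cumulant corrections are lower order. In the macroscopic regime this is false. It would make you miss the term $\kappa_4C^{(4)}$ in $C_\beta$, which is present for $\beta=2$ as well (there $\kappa_4=\E|\chi|^4-2$ need not vanish), not only for $\beta=1$. Since $M_{1\widetilde2}$ is defined through $\mathcal{S}$, it sees only second moments, so no integration of $\langle M_{1\widetilde2}\rangle$ can produce $\kappa_4$.

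In the paper this term comes from the fully underlined part $\underline{\langle W_1G_1A\rangle(\langle G_2\rangle-\E\langle G_2\rangle)}$. In its cumulant expansion, consider the $\ell=4$ terms with indices a permutation of $(ab,ba,ba)$, where one $\partial_{ba}$ hits $(G_1A)_{ba}$ and two hit $\langle G_2\rangle$. These contain only \emph{diagonal} resolvent entries, so the isotropic law gives no smallness. Each summand is of order $N^{-4}$, and there are $|\mathcal{J}_1|=2x_1^2N^2$ of them. Replacing the entries by $m_j$ gives $\frac{\kappa_4x_1^2}{2x_1x_2N^2}U_1U_2$ with $U_j=\frac{\ii}{\sqrt2}\partial_{\eta_j}m_j^2$, exactly the order of the Gaussian term. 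Passed through Girko's formula, this becomes $\kappa_4C^{(4)}$. Its prefactor $\min\{x_1,x_2\}/\max\{x_1,x_2\}$ records that only the entries of the smaller minor are summed.

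Only the $\ell=3$ terms, the $\ell\ge5$ terms and the remaining $\ell=4$ terms are negligible, by a factor $N^{-1/2}$. The same $\kappa_4$ pairing must also be kept in your inductive Wick step, where it produces $U_1U_p$ alongside $V_{1p}$. It is only in the mesoscopic regime of Theorem~\ref{theo:main_meso} that the $\kappa_4$ contribution becomes subleading.

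\textbf{Smaller points.}
\begin{itemize}
\item The regime $\eta\sim N^{-1}$ is not made negligible by the single-resolvent local law plus rigidity; that only gives an $\mathcal{O}(N^\xi)$ bound there. The one-level theory you import relies on the asymptotic independence of the small singular values of $X^{(k)}-z_1$ and $X^{(k)}-z_2$ for $|z_1-z_2|\ge N^{-\omega}$. Importing that per level (the cross-level reduction is then Cauchy--Schwarz) is exactly what the paper does.
\item You do not need an optimal error in the two-resolvent law; the bound $N^\xi/(N\eta_1\eta_2)$ suffices. After the $\eta_2$-derivative (taken by a Cauchy integral, which is why the law is stated in a cone around the imaginary axis) and the prefactor $N^{-2}$, the error already carries an extra factor $1/(N\eta_2)$.
\item Your final computation omits a term. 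The $z$-integration by parts over $\DD_{x_j}$ and $\DD_{x_j}^c$ leaves the boundary term on $\partial\DD_{x_1}\times\partial\DD_{x_2}$, which is the second line of $C^{(G)}$. This term diverges at $x_1=x_2$, so there $C^{(G)}$ must be defined by continuity in $(x_1,x_2)$.
\end{itemize}
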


In the special case when all $k_i$'s are the same in \eqref{eq:main_macro}, we recover the results from \cite[Theorem~2.2]{macroCLT_real} and \cite[Theorem~2.2]{macroCLT_complex}. Additionally, Theorem~\ref{theo:main} implies the following result for test-functions defined on $[0,1]\times\C$ instead of just on one level of the minor process. The proof is presented in Appendix~\ref{sec:C_properties}.

\begin{corollary}[Test functions with 3-dimensional support]\label{cor:general_testfn} Let $X$ and $\Omega$ be as in Theorem~\ref{theo:main}. For test functions $F_1,F_2:[c_0,1]\times\C\to \C$ assume that $F_j(x,\cdot)$, $j=1,2$, satisfy the assumptions of Theorem~\ref{theo:main} uniformly in $x\in[c_0,1]$. Assume additionally that $F_1,F_2$ are $\alpha$-H{\"o}lder continuous in the first variable for some fixed $\alpha>0$. Then we have
\begin{equation}
N^{-2}\Cov\left(L_N(F_1),L_N(F_2)\right)=\int_{c_0}^1\dif x_1\int_{c_0}^1\dif x_2 C_\beta\Big(\big(x_1,F_1(x_1,\cdot)\big),\big(x_2,F_2(x_2,\cdot)\big)\Big)+\mathcal{O}(N^{-c}),
\label{eq:cor_general_testfn}
\end{equation} 
for some $N$-independent $c>0$. The implicit constant in $\mathcal{O}(\cdot)$ may depend only on $c_0$, $\sup_x \|F(x,\cdot)\|_{H^{2+\delta}(\Omega)}$, on the constant in the H{\"o}lder regularity condition and on model parameters from Assumption~\ref{ass:chi}. We also have that $N^{-1}L_N(F_1)$ and $N^{-1}L_N(F_2)$ are asymptotically jointly Gaussian distributed in the sense of \eqref{eq:main_macro}.
\end{corollary}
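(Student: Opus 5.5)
We write $L_N(F_j)=\sum_{k\in[N]} L^{(k)}(F_j(k/N,\cdot))$; only levels with $k\ge c_0N$ contribute, since $F_j$ is defined on $[c_0,1]\times\C$. By bilinearity,
\begin{equation*}
N^{-2}\Cov\big(L_N(F_1),L_N(F_2)\big)=N^{-2}\sum_{k_1,k_2}\Cov\Big(L^{(k_1)}\big(F_1(x_1,\cdot)\big),L^{(k_2)}\big(F_2(x_2,\cdot)\big)\Big),\qquad x_i=k_i/N.
\end{equation*}
We apply Theorem~\ref{theo:main}(i) with $n=2$ and $p_1=q_2=1$, the other exponents being zero (after adjusting for complex conjugation in the covariance convention). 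Together with (ii), this gives for every pair $(k_1,k_2)$
\begin{equation*}
\Cov\big(L^{(k_1)}(f_1),L^{(k_2)}(f_2)\big)=C_\beta\big((x_1,f_1),(x_2,f_2)\big)+\mathcal{O}(N^{-c}).
\end{equation*}
The error is uniform in $k_1,k_2$. The key point is that Theorem~\ref{theo:main} allows $N$-dependent $x_i\in[c_0,1]$, and its implicit constant depends only on the $H^{2+\delta}$-norms, which are bounded uniformly in $x$ by assumption. Summing over the $N^2$ pairs and dividing by $N^2$, the errors contribute $\mathcal{O}(N^{-c})$.

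Next we replace the Riemann sum $N^{-2}\sum_{k_1,k_2}C_\beta\big((x_1,F_1(x_1,\cdot)),(x_2,F_2(x_2,\cdot))\big)$ by the double integral in \eqref{eq:cor_general_testfn}. For this we need a quantitative regularity estimate: there should exist $\gamma>0$ such that, whenever $|x_i-x_i'|\le N^{-1}$,
\begin{equation*}
\Big|C_\beta\big((x_1,F_1(x_1)),(x_2,F_2(x_2))\big)-C_\beta\big((x_1',F_1(x_1')),(x_2',F_2(x_2'))\big)\Big|\lesssim N^{-\gamma},
\end{equation*}
except on a set of pairs of measure $\mathcal{O}(N^{-\gamma})$ near the diagonal $x_1=x_2$. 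We split this into two pieces.

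\emph{Changing the test functions.} The form $C_\beta$ is sesquilinear in the test functions and bounded by their $H^{2+\delta}$-norms (in fact by $H^1$-type norms, as the representation \eqref{eq:intro_main} suggests). The H\"older continuity of $F_j$ in $x$ therefore gives an error of order $N^{-\alpha}$. Here I expect to need H\"older continuity in a norm controlling $C_\beta$, such as a sup or $H^1$ norm, possibly after interpolating against the uniform $H^{2+\delta}$ bound.

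\emph{Changing the levels.} Here we use the explicit formula \eqref{eq:C_beta_2}--\eqref{eq:C_beta_1}, which involves the kernel $\log K$. Off the diagonal, $|x_1-x_2|\ge N^{-\gamma'}$, the kernel $\partial_z\partial_{\bar w}\log K$ is smooth in $(x_1,x_2)$ with derivatives bounded by a power of $|x_1-x_2|^{-1}$, so shifting the levels by $1/N$ costs $N^{-1+C\gamma'}$. Near the diagonal, the strip of pairs has measure $N^{-\gamma'}$, and on it $C_\beta$ is uniformly bounded. This boundedness holds because the log-singularity of $K\sim |z-w|^2+|x_1-x_2|$ is integrable after integrating by parts, so that only $\log K$ (and not its second derivatives) is paired with $\Delta f$-type terms.

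I expect this near-diagonal step to be the main obstacle: proving the uniform bound $\sup_{x_1,x_2}|C_\beta|\lesssim \|f_1\|_{H^{2+\delta}}\|f_2\|_{H^{2+\delta}}$ together with the quantitative continuity in $x$ away from the diagonal. Both would be checked directly from the explicit form of $C_\beta$, including the real-case extra terms in \eqref{eq:C_beta_1}. If Theorem~\ref{theo:main} already furnishes the uniform bound (it does, since covariances are bounded by variances), only the off-diagonal continuity remains, and that is a routine computation.

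Finally, for joint Gaussianity we expand the mixed moments of $N^{-1}L_N(F_j)$ multilinearly as $N^{-m}$ times sums over $N^m$ level tuples. We apply \eqref{eq:main_macro} to each tuple, with an error uniform in the tuple, and use Wick's formula for the limiting Gaussian process $L(x,f)$. Each Wick pairing then reduces to the products of covariance Riemann sums already handled above.
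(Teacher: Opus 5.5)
Your proof is correct and has the same skeleton as the paper's. You apply Theorem~\ref{theo:main} to each pair of levels, using that its error is uniform in $N$-dependent $x_i\in[c_0,1]$. You then convert the double Riemann sum into the double integral by moving the levels with frozen test functions, and separately moving the test functions via H\"older continuity in $x$. Gaussianity comes from the same multilinear expansion.

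Where you differ is the level-shift step. The paper proves one uniform modulus of continuity, Lemma~\ref{lem:C_reg}: $|C^{(G)}((x_1,f_1),(x_2,f_2))-C^{(G)}((x_1',f_1),(x_2',f_2))|\lesssim(|x_1-x_1'|+|x_2-x_2'|)^{1/2-\varepsilon}\prod_j\|\Delta f_j\|_{L^2(\Omega)}$, valid across the diagonal. It follows from the $\Theta$-representation \eqref{eq:C_Gauss_app} and the pointwise bound $|\log K-\log K'|\lesssim((|x_1-x_1'|+|x_2-x_2'|)/|z_1-z_2|^2)^{1/2-\varepsilon}$. Each cell of the Riemann sum then costs $N^{-2}\cdot N^{-1/2+\varepsilon}$, and no cutoff is needed.

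You instead discard a strip $|x_1-x_2|\le N^{-\gamma'}$ using only a uniform bound on $C_\beta$. Off the strip you use derivative bounds that blow up like $|x_1-x_2|^{-C}$. These are indeed available there, since $K\gtrsim|x_1-x_2|^2$ on $\DD_{x_1}\times\DD_{x_2}$ and $|x_1-z_1\overline{z}_2|\gtrsim|x_1-x_2|$ on the boundary circles.

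Your route is softer: the uniform bound can even be extracted from Theorem~\ref{theo:main} itself, by Cauchy--Schwarz on covariances together with the explicit diagonal value \eqref{eq:C_limit}. The price is a non-explicit exponent. The paper's lemma gives a clean rate and also shows that \eqref{eq:C_limit} is the limit of the off-diagonal values.

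You also note that the H\"older continuity must hold in a norm controlling $C_\beta$, which you obtain by interpolating a sup-norm H\"older bound against the uniform $H^{2+\delta}$ bound. This makes explicit a point the paper leaves implicit; there $C^{(G)}$ is controlled by $\|\Delta f_j\|_{L^2}$.
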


To justify the $N^{-2}$ scaling in the lhs. of \eqref{eq:cor_general_testfn} one needs to show that the integral in the rhs. of \eqref{eq:cor_general_testfn} does not vanish for $F_1=F_2$. This is done in the following remark.

\begin{remark}[Positivity of $C_\beta$] Let $F$ be any test function satisfying the assumptions of Corollary~\ref{cor:general_testfn}. Owing to $\Var[L_N(F)]\ge 0$, Corollary~\ref{cor:general_testfn} applied for $F_1=F_2=F$ yields that the sesquilinear form $C_\beta$ is non-negative:
\begin{equation}
\int_{c_0}^1\dif x_1\int_{c_0}^1\dif x_2 C_\beta\Big(\big(x_1,F(x_1,\cdot)\big),\big(x_2,F(x_2,\cdot)\big)\Big)\ge 0.
\label{eq:rem_C_positivity}
\end{equation}
The following argument shows that $C_\beta$ is strictly positive at least when the support of $F$ is mesoscopic or sufficiently small on the macroscopic scale. For $\ell>0$, $x_0\in (c_0,1)$ and $|z_0|<\sqrt{x_0}$ define 
\begin{equation}
F_\ell(x,z):=\ell^{-2} F\left(\ell^{-2}(x-x_0), \ell^{-1}(z-z_0)\right).
\end{equation}
We view $C_\beta$ as a perturbation around its mesoscopic version (see \eqref{eq:C_beta_meso} below), which is known to be strictly positive from \cite[Section~5.5]{bourgade2024fluctuations}. Thus, an elementary perturbative calculation gives that the lhs. of \eqref{eq:rem_C_positivity} evaluated at $F_\ell$ is a positive quantity of order 1 for sufficiently small $\ell>0$ (which may be chosen independently of $N$).
\end{remark}

Now we present an explicit formula for $C_\beta$ emerging in \eqref{eq:Cov_L}. First, for any $x_1,x_2\in (0,1]$ define 
\begin{equation}
\begin{split}
C^{(G)}\big((x_1,f_1), (x_2,f_2)\big):= &-\frac{1}{\pi^2}\int_{\DD_{x_1}}\!\!\!\!\dif^2 z_1\int_{\DD_{x_2}}\!\!\!\!\dif^2 z_2 \, \overline{\partial_{z_1} f_1(z_1)}\partial_{z_2} f_2(z_2) \partial_{z_1}\partial_{\overline{z}_2} \log K\big((x_1,z_1),(x_2,z_2)\big)\\
&+\frac{\min\{x_1,x_2\}}{2\pi^2}\int_{\partial \DD_{x_1}\times \partial \DD_{x_2}} \frac{\overline{f_1(z_1)}f_2(z_2)}{\left(\min\{x_1,x_2\}-z_1\overline{z}_2\right)^2}\dif z_1\wedge\dif \overline{z}_2.
\end{split}
\label{eq:C_Gauss}
\end{equation}
Here $\DD_r:=\sqrt{r}\, \DD$ for $r>0$, and
 $K$ is given by
\begin{equation}
K\big((x_1,z_1),(x_2,z_2)\big):= |z_1-z_2|^2 + |x_1-x_2|B,\quad B=B\big((x_1,z_1),(x_2,z_2)\big):=\begin{cases}
1-\frac{|z_1|^2}{x_1},& x_1\ge x_2,\\
1-\frac{|z_2|^2}{x_2},&x_1<x_2.
\end{cases}
\label{eq:def_K}
\end{equation}
Next, we set
\begin{equation}
\begin{split}
&C^{(4)}\big((x_1,f_1), (x_2,f_2)\big):=\\
&\frac{\min\{x_1,x_2\}}{\max\{x_1,x_2\}}\!\!\left(\!\frac{1}{\pi x_1}\int_{\DD_{x_1}}\!\!\!\!\overline{f_1(z)}\dif^2 z-\frac{1}{2\pi}\int_0^{2\pi}\!\!\overline{f_1(\sqrt{x_1}\ee^{\ii\theta})}\dif\theta\!\right)\!\!\!\left(\!\frac{1}{\pi x_2}\int_{\DD_{x_2}}\!\!\!\!f_2(z)\dif^2 z-\frac{1}{2\pi}\int_0^{2\pi}\!\!f_2(\sqrt{x_2}\ee^{\ii\theta})\dif\theta\!\right)\!.
\end{split}
\label{eq:C_4}
\end{equation}
Finally, we define the bilinear forms $C_\beta$, $\beta=1,2$, determining the behavior of \eqref{eq:Cov_intro} in the real ($\beta=1$) and complex ($\beta=2$) case. Denote
\begin{equation}
\kappa_4:=\E|\chi|^4-1-2/\beta.
\label{eq:def_kappa_4}
\end{equation}
For $\beta=2$ we set 
\begin{equation}
C_{\beta=2}\big((x_1,f_1),(x_2,f_2)\big) := C^{(G)}\big((x_1,f_1),(x_2,f_2)\big) + \kappa_4 C^{(4)}\big((x_1,f_1),(x_2,f_2)\big).
\label{eq:C_beta_2}
\end{equation}
For $\beta=1$ we further define
\begin{equation}
C_{\beta=1}\big((x_1,f_1),(x_2,f_2)\big) := 2C^{(G)}\big((x_1,P_\mathrm{sym}f_1),(x_2,P_\mathrm{sym}f_2)\big) + \kappa_4 C^{(4)}\big((x_1,f_1),(x_2,f_2)\big),
\label{eq:C_beta_1}
\end{equation}
where the symmetrization operator is given by
\begin{equation}
\left(P_{\mathrm{sym}}f\right)(z):=\frac{f(z)+f(\overline{z})}{2},\quad \forall z\in\C.
\label{eq:P_sym}
\end{equation}

The superscript in $C^{(G)}$ refers to the fact that the corresponding terms in \eqref{eq:C_beta_2} and \eqref{eq:C_beta_2} are the only remaining ones when $X$ is a Ginibre matrix, i.e. when the entries of $X$ have Gaussian distribution. The quadratic form $C^{(4)}$ is a correction to the Gaussian result and appears in \eqref{eq:C_beta_2}, \eqref{eq:C_beta_1} multiplied by the fourth cumulant of the single-entry distribution. The appearance of symmetrization $P_{\mathrm{sym}}$ in the real case is due to the fact that the spectrum of $X$ (and of its minors) is symmetric with respect to the real axis.

All integrals in \eqref{eq:C_Gauss} are well-defined, we postpone a detailed discussion of this topic to Appendix~\ref{sec:C_properties}.

\subsection{Mesoscopic CLT}\label{sec:mesoCLT}

In the mesoscopic regime, we fix $a\in (0,1/2)$, $x_0\in(0,1)$, $|z_0|<\sqrt{x_0}$, all $N$-independent, and set
\begin{equation}
f_{j,z_0,a}(z):=f_j\left(N^a(z-z_0)\right),\quad z\in\C,\qquad x_j:=x_0+N^{-2a}y_j,\quad y_j\in\R,
\label{eq:meso_scaling}
\end{equation} 
for $j\in[n]$. As Theorem~\ref{theo:main_meso} below shows, the limiting behavior of covariance \eqref{eq:Cov_intro} in this regime is simpler compared to the macroscopic regime. The contribution from $\kappa_4$ is dominated by the Gaussian contribution, and the limit becomes universal; confirming the natural intuition that spectral statistics on smaller scales are "more universal". Moreover, since we rescale test functions around some point in the bulk, the edge contribution in the second line of \eqref{eq:C_Gauss} disappears. In such a way, we are left only with the analogue of the first term in the rhs. of \eqref{eq:C_Gauss}. Define the local version of the kernel $K$ by
\begin{equation}
K_{x_0,z_0}\big((y_1,z_1),(y_2,z_2)\big):= |z_1-z_2|^2 + |y_1-y_2| \left(1-\frac{|z_0|^2}{x_0}\right).
\end{equation}
The mesoscopic analogue of $C^{(G)}$ is given by
\begin{equation}
C^{(G)}_{x_0,z_0}\big((y_1,f_1),\!(y_2,f_2)\big)\!:=\! -\frac{1}{\pi^2}\!\int_{\DD_{x_0}}\!\!\!\!\!\dif^2 z_1\!\!\int_{\DD_{x_0}}\!\!\!\!\!\dif^2 z_2 \, \overline{\partial_{z_1} f_1(z_1)}\partial_{z_2} f_2(z_2) \partial_{z_1}\partial_{\overline{z}_2} \log K_{x_0,z_0}\big((y_1,z_1),\!(y_2,z_2)\big)\!.
\end{equation}
We also define 
\begin{equation}
C_{x_0,z_0,\beta=2}:=C_{x_0,z_0}^{(G)},\quad C_{x_0,z_0,\beta=1}\big((y_1,f_1),(y_2,f_2)\big):=2C_{x_0,z_0}^{(G)}\big((y_1,P_{\mathrm{sym}}f_1), (y_2,P_{\mathrm{sym}}f_2)\big).
\label{eq:C_beta_meso}
\end{equation}
We point out that an analogous kernel previously appeared in the context of dynamics on non-Hermitian matrices in \cite[Eq.(2.7)]{bourgade2024fluctuations}. Our second main result is as follows.

\begin{theorem}[Mesoscopic CLT]\label{theo:main_meso} Let $X$ be a complex or real $N\times N$ i.i.d. matrix satisfying Assumption~\ref{ass:chi}. Fix (small) $c_0, \tau>0$, an exponent $a\in (0,1/2)$ and an open bounded domain $\Omega\subset \C$. For $x_0\in [c_0,1)$ and $z_0\in\C$ with $|z_0|\le (1-\tau)\sqrt{x_0}$, there exists the centered Gaussian process $L_{x_0,z_0}(y,f)$ indexed by $\R\times H^2_0(\Omega)$ with the covariance structure given by $C_{x_0,z_0,\beta}$ defined in \eqref{eq:C_beta_meso}. The process of mesoscopically scaled centered linear statistics $L^{\lceil xN\rceil}(f_{z_0,a})$ with $x:=x_0+N^{-2a}y$ converges to $L_{x_0,z_0}(y,f)$ in the sense of moments \eqref{eq:main_macro}. The implicit constant in the error term may depend only on the moment degree, $c_0,\tau,a$, model parameters from Assumption~\ref{ass:chi} and $H^2(\Omega)$ norms of test functions.
\end{theorem}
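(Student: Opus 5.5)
The plan is to run the same Girko--Hermitization machinery as for Theorem~\ref{theo:main}, now with the mesoscopically rescaled test functions $f_{j,z_0,a}$ and levels $x_j=x_0+N^{-2a}y_j$, and to keep track of which terms survive the scaling. By Girko's formula \eqref{eq:Girko_intro}, $L^{(k_j)}(f_{j,z_0,a})$ is written as an $\eta$-integral of $\Delta f_{j,z_0,a}(z)\,\Im\langle G^{(k_j),z}(\ii\eta)-\E G^{(k_j),z}(\ii\eta)\rangle$. Since $\|\Delta f_{j,z_0,a}\|_{L^1}\sim N^{2a}\cdot N^{-2a}\|\Delta f_j\|_{L^1}$ stays bounded, the integral is split into the usual three regimes. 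For $\eta\le N^{-1-\epsilon}$ we use the smallest singular value tail bound; this is where Assumption~\ref{ass:chi}(ii) enters. For $N^{-1-\epsilon}\le \eta\le N^{-1+\epsilon}$ the contribution is negligible by the single-resolvent local law together with rigidity-type bounds. For $\eta\ge N^{-1+\epsilon}$ we need the main analysis. All of these estimates are uniform in $z$ in the bulk $|z|\le(1-\tau/2)\sqrt{x_j}$, which contains $\mathrm{supp}\, f_{j,z_0,a}$ for large $N$.

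In the main regime we show Wick's theorem for the moments via cumulant expansion, exactly as in the macroscopic proof. The key input is the covariance \eqref{eq:intro_Cov} between resolvent traces at different levels $k_1\ne k_2$, expressed through the averaged two-resolvent chains $\langle G_1\widetilde B_1P^*G_2P\widetilde B_2\rangle$ and their local law, which we assume is established in the course of proving Theorem~\ref{theo:main}. The higher cumulants are shown to be subleading, giving asymptotic Gaussianity.

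The remaining work is to identify the limiting covariance. I would not redo the computation. Instead, I would take the explicit covariance kernel obtained before the $(z,\eta)$-integration in the macroscopic case, i.e.\ the integrand whose integral gives $C_\beta$, and substitute $z_j=z_0+N^{-a}u_j$, $x_j=x_0+N^{-2a}y_j$. Expanding $K\big((x_1,z_1),(x_2,z_2)\big)$ gives
\begin{equation*}
N^{-2a}\Big(|u_1-u_2|^2+|y_1-y_2|\big(1-|z_0|^2/x_0\big)\Big)+\mathcal{O}(N^{-3a}).
\end{equation*}
Since $\partial_{z_1}\partial_{\overline z_2}\log K$ is scale invariant up to the Jacobian, which cancels against $\partial f_{j,z_0,a}=N^a(\partial f_j)(\cdot)$ and $\dif^2 z_j=N^{-2a}\dif^2u_j$, the first term of \eqref{eq:C_Gauss} converges to $C^{(G)}_{x_0,z_0}$. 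Two terms drop out. The boundary term in \eqref{eq:C_Gauss} vanishes because $f_{j,z_0,a}$ is zero near $\partial\DD_{x_j}$. The $\kappa_4C^{(4)}$ term is $\mathcal{O}(N^{-2a})$, since $\int f_{j,z_0,a}\,\dif^2 z=\mathcal{O}(N^{-2a})$. In the real case, the extra term coming from $\overline{z}$-symmetry produces $P_{\mathrm{sym}}$, and the cross term pairing $z_1$ with $\overline{z}_2$ is negligible unless $z_0$ is near the real axis; this must be handled uniformly, exactly as in \cite{macroCLT_real}.

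The main obstacle is that the mesoscopic errors must beat the shrinking scale. For $|x_1-x_2|\sim N^{-2a}$ and $|z_1-z_2|\sim N^{-a}$ the relevant $\eta$ are down to order $N^{-2a}$, and the error in the two-resolvent local law for different minors has to be of size $N^{\xi}/(N\eta_1\eta_2)$ times the deterministic bound, with the correct dependence on $|z_1-z_2|^2+|x_1-x_2|+\eta_1+\eta_2$. I would first check that the local law for $G_1\widetilde B_1P^*G_2P\widetilde B_2$ was proven with this optimal decorrelation factor in the joint parameter. If so, the error integrated against mesoscopic test functions is $N^{-c}$ for $a<1/2$. If not, I would rerun the zigzag argument tracking $\widehat\gamma:=|z_1-z_2|^2+|x_1-x_2|$.
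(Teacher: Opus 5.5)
Your overall architecture is the same as the paper's: Girko's formula, Wick's theorem by cumulant expansion with the covariance input of Proposition~\ref{prop:underline_comp}, and identification of the limit by rescaling $C_\beta$. Your expansion of $K$, the scale invariance of the first term of \eqref{eq:C_Gauss}, and the vanishing of the boundary and $\kappa_4$ terms are all correct.

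The genuine gap is the regime $N^{-1-\epsilon}\le\eta\le N^{-1+\epsilon}$, which you dismiss ``by the single-resolvent local law together with rigidity-type bounds''. This cannot work. Write $\eta_0:=N^{-1-\epsilon}$ and $\eta_c:=N^{-1+\epsilon}$. For fixed $z$ one has $\int_{\eta_0}^{\eta_c}\Im\mathrm{Tr}\,G^{z}(\ii\eta)\,\dif\eta=\sum_i\log\big((\lambda_i^2+\eta_c^2)/(\lambda_i^2+\eta_0^2)\big)$, with $\lambda_i$ the singular values of $X^{(k)}-z$. Each of the roughly $N\eta_c=N^{\epsilon}$ singular values below $\eta_c$ contributes an order-one random amount. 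Rigidity only locates them up to $N^{-1+\xi}$, i.e.\ up to an order-one relative error. Hence local laws and rigidity bound this piece of $L^{(k)}(f)$ only by $N^{\epsilon}\|\Delta f\|_{L^1}$, not by $o(1)$.

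This piece is negligible only in $L^2$, after the $z$-integration. The reason is that the small singular values of $X^{(k)}-z_1$ and $X^{(k)}-z_2$ are asymptotically independent when $z_1,z_2$ are not too close. This decorrelation is a separate input, proved by coupling Dyson Brownian motions; it is one of the two main ingredients of \cite{macroCLT_complex}, and no local law supplies it.

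Moreover, the macroscopic reduction of Lemma~\ref{lem:Girko_reduction} does not transfer to mesoscopic test functions. In the near-diagonal region $|z_1-z_2|\le N^{-\omega}$ only a crude $N^{C\epsilon}$ covariance bound is available, and that region costs $\|\Delta f_{z_0,a}\|_{L^2}^2N^{-2\omega+C\epsilon}=N^{2a-2\omega+C\epsilon}\|\Delta f\|_{L^2}^2$. So independence is needed down to separations $|z_1-z_2|\ll N^{-a}$, below the scale of the test function. This is exactly \cite[Proposition~3.5]{Cipolloni_meso}, which the paper imports as the one additional ingredient for Theorem~\ref{theo:main_meso}, and it is missing from your plan.

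Conversely, the point you single out as the main obstacle is not one. Note that $\|\Delta f_{j,z_0,a}\|_{L^1}=\|\Delta f_j\|_{L^1}$. Integrating the error term of \eqref{eq:Cov_UV} against $N^2|\Delta f_{1,z_0,a}(z_1)||\Delta f_{2,z_0,a}(z_2)|$ over $\eta_j\in[\eta_c,T]$ therefore gives $\mathcal{O}(N^{-\epsilon+\xi})$ for every $a$. This holds even though Proposition~\ref{prop:2G_randomD} deliberately carries no decay in $|z_1-z_2|$ or $|x_1-x_2|$, and the same is true for the error in Proposition~\ref{prop:Wick}. So there is no need to rerun the zigzag argument tracking $\widehat\gamma$.

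Besides the small-$\eta$ regime above, the mesoscopic scale enters only in the deterministic $\eta$-tails. There the $L^2$-based bound \eqref{eq:abs_V_int2} would lose $N^{2a}$. It should be replaced by integrating $\log(1+\eta_c/|z_1-z_2|^2)$ directly in the rescaled variables, which gives $\mathcal{O}(N^{2a}\eta_c\log N)$, small for $a<1/2$.
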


Similarly to Corollary~\ref{cor:general_testfn}, Theorem~\ref{theo:main_meso} can be also extended to test functions with 3-dimensional support.

\section{Proof strategy}\label{sec:strategy}

The proof of Theorem~\ref{theo:main} closely follows the strategy introduced in \cite{macroCLT_real, macroCLT_complex}, so for brevity we focus on the proof of \eqref{eq:main_macro} for the covariance of $L^{(k_1)}(f_1)$ and $L^{(k_2)}(f_2)$. Specifically, we show that for any fixed $f_1,f_2\in H^{2+\delta}_0(\Omega)$ and for any (possibly, $N$-dependent) $x_j:=k_j/N\in [c_0,1]$, $j=1,2$, it holds that
\begin{equation}
\Cov\left(L^{(k_1)}(f_1), L^{(k_2)}(f_2)\right) = C_\beta \left((x_1,f_1),(x_2,f_2)\right) + \mathcal{O}(N^{-c}),
\label{eq:Cov_aim}
\end{equation}
for some $N$-independent $c>0$. The derivation of \eqref{eq:main_macro} for higher order moments from the inputs presented below is standard and thus is postponed to Appendix~\ref{sec:Wick}. The proof of Theorem~\ref{theo:main_meso} additionally requires only the inputs already available from \cite{Cipolloni_meso}, for more details see Section~\ref{sec:CLTproofs}.

The proof of \eqref{eq:Cov_aim} consists of two steps outlined in Sections~\ref{sec:strategy_Girko} and~\ref{sec:strategy_Cov}. First, we apply the Girko's Hermitization formula and remove the singular regimes from it. This is the content of Lemma~\ref{lem:Girko_reduction}, which is borrowed from \cite{macroCLT_complex}. In the second step we compute the covariance of two resolvent traces in Proposition~\ref{prop:underline_comp}, which is a new technical input.

\subsection{The Girko's Hermitization formula}\label{sec:strategy_Girko} As usual,  for $k\in[N]$ we set $x:=k/N$. For $z\in \C$ and $w\in\C\setminus\R$ denote
\begin{equation}
W^{(k)}:=\begin{pmatrix}
0&X^{(k)}\\\big(X^{(k)}\big)^*&0
\end{pmatrix},\,\,
Z^{(k)}:=\begin{pmatrix}
0&z\\
\overline{z}&0
\end{pmatrix},\,\,H^{(k),z}:=W^{(k)}-Z^{(k)},\,\, G^{(k),z}_x(w):=\big( H^{(k),z}-w\big)^{-1}. 
\label{eq:def_hermitization}
\end{equation}
All matrices defined in \eqref{eq:def_hermitization} have size $(2k)\times (2k)$. In $G^{(k),z}_x$ the subscript indicates that the entries of $H^{(k),z}$ have variance $N^{-1/2} = x^{1/2}k^{-1/2}$ instead of the standard normalization $k^{-1/2}$. The same subscript could be included into the notations $W^{(k)}$ and $H^{(k),z}$ for the same reason, however we avoid this for brevity. We call $W^{(k)}$ the \emph{Hermitization} of $X^{(k)}$, $z$ the \emph{Hermitization parameter}, and $w$ the \emph{spectral parameter}. The Girko's Hermitization formula \cite{Girko84} in the regularized form given in \cite{TaoVu15} states that
\begin{equation}
\sum_{i=1}^k f(\sigma_i^{(k)})=\frac{1}{4\pi} \int_\C \Delta f(z) \log\left\vert \det (H^{(k),z}-\ii T)\right\vert \dif^2 z - \frac{k}{2\pi\ii} \int_\C \Delta f(z)\dif^2 z \int_0^T \big\langle G^{(k),z}_x(\ii\eta)\big\rangle\dif \eta,
\label{eq:Girko}
\end{equation}
where we choose $T:=N^{100}$. For a small fixed $\delta_c>0$ which will be chosen later we set
\begin{equation}
I_{c}^{(k)}(f):=- \frac{k}{2\pi\ii} \int_{\C} \Delta f(z)\dif^2 z \int_{\eta_c}^{T} \big\langle G^{(k),z}_x(\ii\eta)\big\rangle\dif \eta,\quad \eta_c:=N^{-1+\delta_c}.
\label{eq:def_I_c}
\end{equation}
Here we removed the small $\eta$ regime from consideration that is shown to be negligible. Indeed, the following statement shows that the leading contribution to the lhs. of \eqref{eq:Cov_aim} comes from $I_c^{(k_j)}(f_j)$.

\begin{lemma}\label{lem:Girko_reduction} Assume the set-up and conditions of Theorem~\ref{theo:main}. There exists a (small) fixed $\delta_c>0$ such that
\begin{equation}
\mathrm{Cov}\left(L^{(k_1)}(f_1), L^{(k_2)}(f_2)\right) = \mathrm{Cov}\left(I_{c}^{(k_1)}(f_1), I_{c}^{(k_2)}(f_2)\right) + \mathcal{O}(N^{-c}),
\label{eq:Girko_reduction}
\end{equation}
for any $f_j\in H^{2+\delta}_0(\Omega)$ and $k_j\in[N]$ with $x_j:=k_j/N\in [c_0,1]$, $j=1,2$. The constant $c>0$ does not depend on $N$, and the implicit constant in $\mathcal{O}(\cdot)$ may depend on $\|\Delta f_j\|_{L^2(\Omega)}$ and $|\Omega|$. 
\end{lemma}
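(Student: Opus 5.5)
Using \eqref{eq:Girko}, I split $\sum_i f_j(\sigma_i^{(k_j)})=J_T^{(k_j)}(f_j)+I_0^{(k_j)}(f_j)+I_c^{(k_j)}(f_j)$. Here $J_T$ is the $\log|\det(H^{(k),z}-\ii T)|$ term, $I_0$ is the $\eta$-integral over $[0,N^{-1-\delta_0}]$, and I further split $[N^{-1-\delta_0},\eta_c]$ off as an intermediate term $I_{m}$. The covariance is sesquilinear, and Cauchy--Schwarz gives $|\Cov(A,B)|\le \Var[A]^{1/2}\Var[B]^{1/2}$. So it suffices to prove $\Var[J_T^{(k)}]+\Var[I_0^{(k)}]+\Var[I_m^{(k)}]\lesssim N^{-c}$ uniformly in $k\ge c_0N$, together with $\Var[I_c^{(k)}(f)]\lesssim 1$ to control the cross terms. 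Since each term concerns a single minor $X^{(k)}$, I treat $X^{(k)}$ as a $k\times k$ i.i.d. matrix with entry variance $N^{-1}=x k^{-1}$. After rescaling by $\sqrt{x}$ this is the standard setting of \cite{macroCLT_complex, macroCLT_real}: the Hermitization parameter becomes $z/\sqrt{x}$, $\Omega$ is replaced by $\Omega/\sqrt{x}$, and the $H^{2+\delta}$ norms change only by factors depending on $c_0$. The regularity assumption \eqref{eq:reg_ass} is preserved under this rescaling. So I would essentially invoke the single-level reduction from \cite{macroCLT_complex} (and its real analogue), checking that its constants are uniform in $x\in[c_0,1]$.

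For the concrete bounds, I proceed term by term. For $J_T$: with $T=N^{100}$ one has $\log|\det(H-\ii T)|=2k\log T+\mathcal{O}(\|H\|^2T^{-2}k)$, so its fluctuation is tiny once I use $\|X\|\prec 1$ on a very high probability event and the moment bound \eqref{eq:momass} off it. For $I_0$: I use the smallest singular value tail bound from the regularity assumption, $\mathbf{P}(\lambda_1^z\le u/N)\lesssim u^{c}N^{C}$ in the spirit of \cite{TaoVu15}, which controls $\int_0^{N^{-1-\delta_0}}\Im\langle G\rangle\,\dif\eta = \frac{1}{2k}\sum_i \log(1+\eta_0^2/\lambda_i^2)$. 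Combined with the eigenvalue rigidity of $H^{(k),z}$ near zero, this gives a contribution $\prec N^{-\delta_0/2}$ in $L^2$ after integrating against $\Delta f\in L^2$.

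The intermediate regime $[N^{-1-\delta_0},N^{-1+\delta_c}]$ is the main obstacle. A crude local-law bound only gives $k|\langle G-M\rangle|\prec 1/(k\eta)$, which is not small there. Following \cite{macroCLT_complex}, I would use the weak correlation of $\lambda_1^{z_1}$ and $\lambda_1^{z_2}$ for $|z_1-z_2|\ge N^{-\omega}$, proved there via Dyson Brownian motion coupling with independent Ginibre-type processes. This gives $|\Cov(k\Im\langle G^{z_1}(\ii\eta_1)\rangle,k\Im\langle G^{z_2}(\ii\eta_2)\rangle)|\lesssim N^{-c}/(\eta_1\eta_2 N^2)$ up to logarithms in this regime. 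I then integrate over $\eta_1,\eta_2$ and over $z_1,z_2$. The near-diagonal set $|z_1-z_2|<N^{-\omega}$ is handled by the trivial bound together with the small volume and $\Delta f\in L^2$ via Hölder, which is where $\delta>0$ in $H^{2+\delta}$ enters. Choosing $\delta_0,\delta_c$ small relative to the resulting exponent gives $\Var[I_m]\lesssim N^{-c}$. Finally, $\Var[I_c^{(k)}]\lesssim 1$ follows from the single-level local law for $G^{(k),z}$ and the same covariance estimate, which completes the Cauchy--Schwarz argument and yields \eqref{eq:Girko_reduction}.
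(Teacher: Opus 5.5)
Your plan is the paper's: the paper proves this lemma only by citing the single-level reduction \cite[Lemma~4.1]{macroCLT_complex} and \cite[Proposition~3.4]{macroCLT_real}, and your per-level $L^2$ bounds on the discarded pieces $J_T, I_0, I_m$, combined with Cauchy--Schwarz and the $\sqrt{x}$-rescaling for uniformity in $x\in[c_0,1]$, are exactly what justify applying that single-matrix result to two different minors. One caveat on your optional term-by-term sketch: a tail bound $\mathbf{P}(\lambda_1^z\le u/N)\lesssim u^cN^C$ only helps at extremely small scales, and rigidity gives no lower bound on $\lambda_1^z$, so for $\eta$ up to $N^{-1-\delta_0}$ you need a lower-tail estimate effective at that scale (e.g.\ $\mathbf{P}(\lambda_1^z\le u/N)\lesssim u^c+N^{-c}$); also, the intermediate-regime covariance of $k\Im\langle G\rangle$ should read $N^{-c}/(\eta_1\eta_2)$, not $N^{-c}/(N^2\eta_1\eta_2)$.
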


The proof of Lemma~\ref{lem:Girko_reduction} immediately follows from \cite[Lemma~4.1]{macroCLT_complex} for $\beta=2$ and \cite[Proposition~3.4]{macroCLT_real} for $\beta=1$, and thus is omitted.

\subsection{Covariance of two resolvent traces}\label{sec:strategy_Cov} Lemma~\ref{lem:Girko_reduction} reduces the calculation of the lhs. of \eqref{eq:Girko_reduction} to the calculation of covariance of two resolvent traces. The result of this calculation is presented in Proposition~\ref{prop:underline_comp} below. Let us introduce the framework for this result. The \emph{self-energy operator} $\mathcal{S}^{(k)}_x$ is given by
\begin{equation}
\begin{split}
&\mathcal{S}^{(k)}_x[R] := x\left(\big\langle RE_+^{(k)}\big\rangle E_+^{(k)} -\big\langle RE_-^{(k)}\big\rangle E_-^{(k)}\right),\quad E_\pm^{(k)}:=\begin{pmatrix}
1&0\\0&\pm 1
\end{pmatrix},\quad \forall R\in \C^{(2k)\times (2k)}.
\end{split}
\label{eq:def_S}
\end{equation}
The blocks of $E^{(k)}_\pm$ are of size $k\times k$. For $\beta=2$ the rhs. of \eqref{eq:def_S} actually coincides with $\E\big[ W^{(k)}RW^{(k)}\big]$. For $\beta=1$ this expectation slightly differs from the rhs. of \eqref{eq:def_S}, however we still use \eqref{eq:def_S} as the definition of $\mathcal{S}_x^{(k)}$ for technical convenience. For $z\in\C$ and $w\in\C\setminus\R$, the \emph{matrix Dyson equation} (MDE) is defined by
\begin{equation}
-M^{-1}=w-Z^{(k)}+\mathcal{S}^{(k)}_x[M],\quad \Im w\Im M>0,\quad M\in\C^{(2k)\times (2k)}.
\label{eq:MDE}
\end{equation}
It is known from \cite[Lemma~2.2]{loc_circ_law18} that the solution to \eqref{eq:MDE} exists and is unique (see also the abstract result in \cite{Helton07}). We denote this solution by $M:=M^{(k),z}_x(w)$. Typically, the equation \eqref{eq:MDE} is considered in the literature for $x=1$, while the general case $x\in (0,1]$ can be reduced to $x=1$ by rescaling:
\begin{equation}
M_x^{(k),z}(w) =x^{-1/2} M_1^{(k),x^{-1/2}z}\left(x^{-1/2}w\right),\quad G_x^{(k),z}(w) =x^{-1/2} G_1^{(k),x^{-1/2}z}\left(x^{-1/2}w\right).
\label{eq:M_rescaling}
\end{equation}
These relations allow us to transfer results stated for $x=1$ to the general case $x\in(0,1]$, which we will implicitly use from now on. In particular, \eqref{eq:M_rescaling}, in combination with \cite[Eq.(3.5)-(3.6)]{loc_circ_law18}, implies that $M$ has the following $2\times 2$ block-constant structure with the blocks of size $k\times k$:
\begin{equation}
M=\begin{pmatrix}
m&-zu\\ -\overline{z}u&m
\end{pmatrix},\quad u=u^{z}_x(w):=\frac{m}{w+xm},
\label{eq:def_u}
\end{equation}
where $m=m^{z}_x(w)$ solves the cubic equation
\begin{equation}
-\frac{1}{m} = w+xm -\frac{|z|^2}{w+xm},\quad \Im w\Im m>0.
\label{eq:m_cubic}
\end{equation}
By \cite[Eq.(3.3)]{macroCLT_complex} it holds that
\begin{equation}
\|M\|+|m|+|u|\lesssim x^{-1}.
\label{eq:Mmu_bound}
\end{equation}
Finally, we denote
\begin{equation}
\rho=\rho^z_x(w):=\frac{1}{\pi}|\Im m^z_x(w)|.
\label{eq:def_rho}
\end{equation}

The solution to \eqref{eq:MDE} is called the \emph{deterministic approximation} to $G^{(k),z}_x$, since this resolvent concentrates around $M^{(k),z}_x$ in the sense of the following \emph{single-resolvent local laws} proven earlier.
\begin{theorem} \label{theo:1Gllaw}
Let $X$ satisfy Assumption~\ref{ass:chi}(i). Fix a (large) $L>0$ and a (small) $c_0>0$. There exists an $N$-independent constant $\tau>0$ such that
\begin{align}
\left\vert\left\langle \big(G^{(k),z}_x(w)-M^{(k),z}_x(w)\big)B\right\rangle\right\vert &\prec\frac{\|B\|}{N\eta},\label{eq:1G_av}\\
\left\vert\left\langle \bm{x}, \big(G^{(k),z}_x(w)-M^{(k),z}_x(w)\big)\bm{y}\right\rangle\right\vert &\prec\|\bm{x}\|\|\bm{y}\|\left(\sqrt{\frac{\rho_x^{z}(w)}{N\eta}}+\frac{1}{N\eta}\right),\label{eq:1G_iso}
\end{align}
uniformly in deterministic matrix $B\in\C^{(2k)\times (2k)}$, deterministic vectors $\bm{x},\bm{y}\in\C^{2k}$, $x:=k/N\in [c_0,1]$, $|\Re w|\le \tau$, $\eta:=|\Im w|\in (N^{-L}, N^L)$ and $|z|\le L$.
\end{theorem}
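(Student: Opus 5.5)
The plan is to deduce Theorem~\ref{theo:1Gllaw} from the known single-resolvent local laws for the standard normalization $x=1$, using the scaling relation \eqref{eq:M_rescaling}. The matrix $X^{(k)}$ is a $k\times k$ i.i.d. matrix whose entries have variance $N^{-1}=x k^{-1}$. Hence $x^{-1/2}X^{(k)}$ is a standard $k\times k$ i.i.d. matrix with variance $k^{-1}$, and it satisfies Assumption~\ref{ass:chi}(i) with the same model parameters. Its Hermitization is $x^{-1/2}H^{(k),z}=H_1^{(k),x^{-1/2}z}$. Therefore
\[
G_x^{(k),z}(w)=x^{-1/2}G_1^{(k),z'}(w'),\qquad M_x^{(k),z}(w)=x^{-1/2}M_1^{(k),z'}(w'),
\]
with $z':=x^{-1/2}z$ and $w':=x^{-1/2}w$. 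It then suffices to apply the averaged and isotropic local laws for $G_1^{(k),z'}(w')$ at dimension $k$. These are available from \cite{loc_circ_law18} for the isotropic law and from \cite{macroCLT_complex}, \cite{macroCLT_real} for the averaged law with general $B$, in both the real and complex cases. They are stated uniformly in $|z'|\le L'$, $|\Re w'|\le\tau'$ and $|\Im w'|\in(N^{-L'},N^{L'})$, and they hold both in the bulk $|z'|<1$ and outside it, $|z'|>1$, including near the edge $|z'|\approx 1$.

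The next step is to check that the parameters transfer correctly. Since $x\in[c_0,1]$, we have $|z'|\le c_0^{-1/2}L$ and $|\Re w'|\le c_0^{-1/2}\tau$. The constant $\tau$ is chosen small enough, depending on $c_0$, so that $|\Re w'|$ stays within the admissible window $\tau'$ of the $x=1$ result. We also have $\eta'=x^{-1/2}\eta$, which lies in $(N^{-L},c_0^{-1/2}N^{L})$, a range covered after enlarging $L$. Since $k=xN\ge c_0N$, the error terms $1/(k\eta')$ and $\sqrt{\rho_1^{z'}(w')/(k\eta')}$ are comparable to $1/(N\eta)$ and $\sqrt{\rho/(N\eta)}$ up to $c_0$-dependent constants, which are absorbed in $\prec$.

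The densities also match. From $m_x^z(w)=x^{-1/2}m_1^{z'}(w')$ we get $\rho_x^z(w)=x^{-1/2}\rho_1^{z'}(w')$, which is comparable to $\rho_1^{z'}(w')$. Multiplying the $x=1$ bounds by the prefactor $x^{-1/2}\le c_0^{-1/2}$ then yields \eqref{eq:1G_av} and \eqref{eq:1G_iso}. The bounds are uniform in $x$ because the constants in the $x=1$ theorem are uniform over the compact parameter set. Uniformity in $k$ is fine as well, since the $x=1$ results hold for every dimension $k\ge c_0N$ with $N^{\epsilon}$ and $N^{-D}$ replaced by $k^{\epsilon}$ and $k^{-D}$, which are equivalent.

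The main obstacle is making sure that the cited $x=1$ results really cover the full parameter range stated here. In particular, they must cover very large $\eta$, which is trivial via the expansion $G\approx -1/w$ and gives a better bound, and $|z'|$ up to $c_0^{-1/2}L$ outside the unit disk. They must also cover small $\eta$ down to $N^{-L}$ with $\rho$ possibly tiny, which for $|z'|>1$ relies on the spectral gap of $H^{z'}$ in \cite{loc_circ_law18}. For the averaged law with general $B$, and not only $B=I$, one may need to go beyond isotropic bounds. I would invoke the averaged local law of \cite[Theorem~3.1]{macroCLT_complex} (respectively \cite{macroCLT_real}) directly; otherwise I would derive it by a standard cumulant expansion of $\langle (G-M)B\rangle$.
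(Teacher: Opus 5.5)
Your strategy is the paper's: reduce to $x=1$ via the rescaling \eqref{eq:M_rescaling} and quote the known single-resolvent local laws for a $k\times k$ matrix with standard variance. Your parameter bookkeeping is correct: $z'=x^{-1/2}z$, $w'=x^{-1/2}w$, $k\eta'=x^{1/2}N\eta$, $\rho^z_x(w)=x^{-1/2}\rho^{z'}_1(w')$, and shrinking $\tau$ and enlarging $L$ depending on $c_0$.

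The gap is in the inputs you quote. You claim that \cite{loc_circ_law18}, \cite{macroCLT_complex} and \cite{macroCLT_real} hold uniformly for $|z'|\le c_0^{-1/2}L$, ``including near the edge $|z'|\approx 1$''. You then rely on their constants being uniform over this compact set. The bulk/outside local laws you cite are established for $\bigl||z'|-1\bigr|\ge\theta$ with a fixed $\theta>0$, and their constants deteriorate as $\theta\to 0$. The reason is that the stability of the MDE degenerates at $|z'|=1$ (compare $\|A\|\lesssim |x-|z|^2|^{-1}$ in \eqref{eq:A_bound}), and there $\rho\sim\eta^{1/3}$ by \eqref{eq:rho_asymp}. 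This is exactly the regime where the $\rho$-dependent isotropic error $\sqrt{\rho/(N\eta)}$ needs a dedicated edge analysis.

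The paper therefore splits the parameter range into two regimes. For $|\sqrt{x}-|z||\ge\theta$ it uses \cite[Theorem~3.1]{macroCLT_real}, which already contains the averaged law with a general observable $B$, so the fallback cumulant expansion you mention is not needed. For the edge regime $|\sqrt{x}-|z||\le\theta$ it uses the edge local law \cite[Theorem~3.1]{spectral_radius}; after rescaling this corresponds to $\bigl||z'|-1\bigr|\le c_0^{-1/2}\theta$. Adding this second input for the edge regime closes your argument.
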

For a sufficiently small fixed $\theta>0$, the regime $|\sqrt{x}-|z||\ge \theta$ in Theorem~\ref{theo:1Gllaw} is covered by \cite[Theorem~3.1]{macroCLT_real} and the complementary edge regime $|\sqrt{x}-|z||\le \theta$ by \cite[Theorem~3.1]{spectral_radius}.

Now we are ready to state the main technical result of Section~\ref{sec:strategy_Cov}.

\begin{proposition}\label{prop:underline_comp} Let $X$ satisfy Assumption~\ref{ass:chi}(i). Fix a (large) $L>0$ and (small) $c_0,\epsilon,\xi>0$. Let $\theta>0$ be a (possibly, $N$-dependent) small parameter. Consider $k_j\in [N]$, $z_j\in\C$, and $w_j=\ii\eta_j$ with $\eta_j>0$, for $j=1,2$. Denote $x_j:=k_j/N$, $G_j:=G^{(k_j),z_j}_{x_j}(w_j)$ and assume that $x_1\le x_2$. Recall the definition of $\kappa_4$ from \eqref{eq:def_kappa_4}. It holds that
\begin{equation}
\E \langle G_1-\E G_1\rangle \langle G_2-\E G_2\rangle = \frac{1}{2x_1x_2N^2}\left(V_{12}+\bm{1}_{\beta=1}V_{1\overline{2}}+\kappa_4 x_1^2 U_1U_2\right) + \mathcal{O}_\theta\left(\left(\frac{1}{N\eta_*}+\frac{1}{\sqrt{N}}\right)\frac{N^\xi}{N^2\eta_1\eta_2}\right),
\label{eq:Cov_UV}
\end{equation}
uniformly in $x_j\in [c_0,1]$, $z_j\in\C$ with $\theta\le |\sqrt{x_j}-|z_j||\le L$, and $N^{-1+\epsilon}\le \eta_j\le N^L$, for $j=1,2$. In \eqref{eq:Cov_UV} we denoted $\eta_*:=\min\{\eta_1,\eta_2\}$ and
\begin{equation}
\begin{split}
U_j=U_j(x_j,\eta_j,z_j):=&\frac{\ii}{\sqrt{2}}\partial_{\eta_j}m_j^2,\quad j=1,2,\\
V_{12}:=V\big((x_1,\eta_1,z_1),(x_2,\eta_2,&z_2)\big),\quad V_{1\overline{2}}:=V\big((x_1,\eta_1,z_1),(x_2,\eta_2,\overline{z}_2)\big),\\ 
V\big((x_1,\eta_1,z_1),(x_2,\eta_2,z_2)\big):=&\frac{1}{2}\partial_{\eta_1}\partial_{\eta_2}\log \left[1+x_1^2\left(u_1u_2|z_1z_2|\right)^2 -x_1^2m_1^2m_2^2 -2x_1u_1u_2\Re [z_1\overline{z}_2]\right],\\
\end{split}
\label{eq:def_UV}
\end{equation}
where $m_j:=m^{z_j}_{x_j}(w_j)$ and $u_j:=u^{z_j}_{x_j}(w_j)$.
The implicit constant in $\mathcal{O}_\theta(\cdot)$ is bounded from above by $\theta^{-D}$ for some (large) fixed $D>0$. 
\end{proposition}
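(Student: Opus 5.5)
We follow the cumulant-expansion scheme of \cite{macroCLT_complex, macroCLT_real}. The one new input is a two-resolvent local law for chains mixing two minor levels. The starting point is the identity $\langle G_2-\E G_2\rangle$ combined with $G_2 = M_2 - M_2\underline{W^{(k_2)}G_2}+M_2\mathcal{S}_{x_2}^{(k_2)}[G_2-M_2]G_2$. Here $\underline{W G}:=WG+\mathcal{S}[G]G$ denotes the self-renormalised product. We use this to write
\[
\E\langle G_1-\E G_1\rangle\langle G_2-\E G_2\rangle=-\E\langle G_1-\E G_1\rangle\big\langle \underline{W^{(k_2)}G_2}A_2\big\rangle+\text{(error)},
\]
with $A_2:=(1-\mathcal{S}^{(k_2)}_{x_2}[M_2\cdot]M_2)^{-*}$ applied to the identity. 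This is a deterministic $2\times2$ block-constant matrix, bounded by $\theta^{-D}$ since $|\sqrt{x_2}-|z_2||\ge\theta$ keeps the stability operator away from its singular direction.

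Next we perform the cumulant expansion in the entries of $X^{(k_2)}$, which are also entries of $X^{(k_1)}$, since $X^{(k_1)}$ is a submatrix of $X^{(k_2)}$ when $k_1\le k_2$.
\begin{itemize}
\item The second-order cumulant that acts on $G_2$ is removed by the underline.
\item The second-order cumulant that acts on $G_1$ produces, via $\partial_{w_{ab}}G_1=-G_1\Delta^{ab}G_1$ (nonzero only for indices inside the smaller block), terms of the form $\frac{1}{N^2}\langle G_1 E P^* G_2 A_2 P E' G_1\rangle$. Here $P$ is the structural $2k_1\times 2k_2$ projection and $E,E'\in\{E_\pm\}$.
\item For $\beta=1$ there are additional transposed terms $\langle G_1E P^* G_2^{\mt}\ldots\rangle$, which after the symmetry $G^{z}{}^{\mt}\sim G^{\overline z}$ produce $V_{1\overline2}$.
\item The fourth-cumulant terms factorise, to leading order, into products of single-resolvent diagonal entries. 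Using the isotropic law \eqref{eq:1G_iso}, they give $\kappa_4 x_1^2U_1U_2$. The factor $x_1^2$ arises because only $k_1^2$ entries are shared.
\item Higher cumulants and the remaining terms are estimated by \eqref{eq:1G_av}--\eqref{eq:1G_iso} and standard power counting, giving the error $(N\eta_*)^{-1}+N^{-1/2}$.
\end{itemize}
We rewrite $\langle G_1BG_1\rangle=\partial_{w_1}\langle G_1B\rangle$-type identities so that the leading term becomes $\partial_{\eta_1}$ of a two-resolvent chain $\langle G_1 B_1 P^*G_2 P B_2\rangle$.

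The key new step is a two-resolvent averaged local law: $\langle G_1 B_1P^*G_2PB_2\rangle$ concentrates around an explicit $\langle M_{12}^{B_1}B_2\rangle$ with error $\prec (N\eta_1\eta_2)^{-1}\cdot(\ldots)$ small relative to the leading size $(\eta_1\eta_2)^{-1}$ corrections. We would derive the deterministic approximation from the equation
\[
G_1B_1P^*G_2P=M_1B_1P^*M_2P+M_1\mathcal{S}^{(k_1)}_{x_1}[G_1B_1P^*G_2P]\,\cdot\,\text{(restricted)}\,M_2+\ldots
\]
obtained by expanding $G_1$. The key structural fact is that $\E[W^{(k_1)}RPW^{(k_2)}P^*]$ equals $x_1$ times a block-trace operator on the small block, since the variance of shared entries is $1/N$. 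This yields a $2\times2$ linear stability operator $\mathcal{B}_{12}=1-x_1 M_1\mathcal{S}[\cdot](P^*M_2P)$ acting on block-constant matrices. Its determinant equals precisely the bracket
\[
1+x_1^2(u_1u_2|z_1z_2|)^2-x_1^2m_1^2m_2^2-2x_1u_1u_2\Re[z_1\overline z_2]
\]
in \eqref{eq:def_UV}. Explicitly inverting the $4\times4$ system on $E_\pm$ and off-diagonal blocks, and using $\partial_{\eta}$ identities for $m,u$ from \eqref{eq:m_cubic}, should identify the leading term as $\frac{1}{2}\partial_{\eta_1}\partial_{\eta_2}\log\det\mathcal{B}_{12}$, that is $V_{12}$.

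To prove the fluctuation bound of this local law, we would run the zigzag strategy (characteristic flow plus Green function comparison) as in \cite{Multi_res_llaws}. The flow evolves $X$ by an Ornstein–Uhlenbeck process, which preserves the minor structure, and moves $z_j,w_j$ along characteristics. The main obstacle is this step: controlling the non-square chain $G_1B_1P^*G_2PB_2$ along the flow. The size mismatch breaks the usual symmetry between the two resolvents, and the relevant stability factor $\det\mathcal{B}_{12}$ must be shown to be bounded below by $\gtrsim \theta^{C}(\eta_1+\eta_2+|z_1-z_2|^2+|x_1-x_2|)$ uniformly. We would first attempt to establish this lower bound directly from the explicit formula, and then reuse the averaged/isotropic chain bounds of the square case with $P$ inserted.
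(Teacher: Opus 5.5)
Your proposal is correct and follows essentially the same route as the paper. The paper likewise reduces the covariance, via second-order renormalisation and a cumulant expansion, to an $\eta$-derivative of the mixed-size chain $\langle G_1B_1P^*G_2PB_2\rangle$ plus a $\kappa_4 x_1^2U_1U_2$ term, proves a two-resolvent local law for that chain by the zigzag strategy (removing $P$ by Schwarz inequalities and bounding the stability factor $\beta_+\beta_-=\det\mathcal{P}$ from below, including the new $|x_1-x_2|$ term of Lemma~\ref{lem:beta}), and identifies the leading term as $\frac12\partial_{\eta_1}\partial_{\eta_2}\log\det\mathcal{P}$. The differences are cosmetic: you renormalise $G_2$ rather than the smaller resolvent $G_1$, so the derivative falls on $\eta_1$ instead of $\eta_2$, and you leave implicit two inputs the paper states explicitly, namely the local law in a narrow cone around the imaginary axis (needed to differentiate its error via Cauchy's formula) and the bounds of Lemma~\ref{lem:M2_bound} on $M_{1\widetilde 2}$ used in the zigzag step.
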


In the special case $x_1=x_2=1$, the leading term in the rhs. of \eqref{eq:Cov_UV} was first obtained in \cite[Theorem~3.3]{macroCLT_complex} for $\beta=2$ and in \cite[Theorem~3.3]{macroCLT_real} for $\beta=1$. The bound on the error term in the rhs. of \eqref{eq:Cov_UV} was then subsequently improved in \cite[Proposition~3.4]{Cipolloni_meso} and \cite[Proposition~3.4]{hyperuniformity}. In the current paper we do not aim to get an optimal bound on the error term, since this is not needed for the proof of Theorem~\ref{theo:main}. Observe that the local law \eqref{eq:1G_av} implies the bound of order $(N^2\eta_1\eta_2)^{-1}$ on the lhs. of \eqref{eq:Cov_UV} (up to the factor $N^\xi$), which is indeed the size\footnote{It also holds that the leading term in the rhs. of \eqref{eq:Cov_UV} has an upper bound of order $C_\theta(N^2\eta_1\eta_2)^{-1}$ in absolute value, with some constant $C_\theta\lesssim \theta^{-D}$ for sufficiently large $D>0$. See e.g. \eqref{eq:abs_V_int1} and Lemma~\ref{lem:beta} for the bound on $|V_{12}|$, and  \eqref{eq:A_bound}, \eqref{eq:MA} for the bound on $|U_j|$.}  of the leading term in the rhs. of \eqref{eq:Cov_UV} in the case when $x_1=x_2$, $z_1=z_2$ and $w_1=w_2$. Using the chaos expansion technique from \cite[Section~7]{hyperuniformity} we could also show that the first term is the dominating one for general $x_j,z_j,w_j$. This could be done by incorporating $|z_1-z_2|$ dependence into the bound on the error term and getting that this bound improves as $|z_1-z_2|$ increases. However we do not pursue this line of research here.

The key ingredient in the proof of Proposition~\ref{prop:underline_comp} is the concentration bound for the product of $G_1$ and $G_2$. Results of this type are commonly referred to as two-resolvent local laws. Since the sizes of $G_1$ and $G_2$ do not match, we need to project $G_2$ on the space of $(2k_1)\times (2k_1)$ matrices. To this end, denote
\begin{equation}
\label{eq:defP}
\widetilde{G}_2:=P^*G_2P,\quad\text{where}\quad P:=\begin{pmatrix}
0&0\\1&0\\0&0\\0&1
\end{pmatrix}\in \C^{(2k_2)\times (2k_1)}.
\end{equation}
The matrix $P$ is split into columns of width $k_1,k_1$ and the rows of height $k_2-k_1, k_1,k_2-k_1,k_1$. It is defined in such a way that
\begin{equation}
W_1=P^*W_2P,\quad \text{where}\quad W_j:=W^{(k_j)},\,\,j=1,2. 
\end{equation}
We show that the product of $G_1$ and $\widetilde{G}_2$ concentrates around its deterministic approximation, which is the content of Proposition~\ref{prop:2G_randomD} below.

Let $B\in\C^{(2k_1)\times (2k_1)}$ be a deterministic matrix, which we customarily call an \emph{observable}. The deterministic approximation to $G_1B\widetilde{G}_2$ is constructed from the deterministic approximations to $G_1$ and $\widetilde{G}_2$, as we now explain. First, denote
\begin{equation}
M_j:=M^{(k_j),z_j}_{x_j}(w_j),\, \, j=1,2,\quad\text{and}\quad \widetilde{M}_2:=P^*M_2P.
\label{eq:M_tilde}
\end{equation}
Note that $\widetilde{M}_2$ differs from $M_2$ only by shrinking the size of blocks from $k_2\times k_2$ to $k_1\times k_1$, while the constants inside the blocks remain intact. The \emph{two-body stability operator} associated to $G_1$, $\widetilde{G}_2$ is given by
\begin{equation}
\mathcal{B}_{1\widetilde{2}}[R]:=R - M_1\mathcal{S}^{(k_1)}_{x_1}[R]\widetilde{M}_2,\quad \forall R\in\C^{(2k_1)\times (2k_1)}.
\label{eq:def_B_12}
\end{equation}
Finally, the deterministic approximation to $G_1B\widetilde{G}_2$ is defined by
\begin{equation}
\label{eq:M2def}
M_{1\widetilde{2}}^B:=\mathcal{B}_{1\widetilde{2}}^{-1}[M_1B\widetilde{M}_2]\in\C^{(2k_1)\times (2k_1)}.
\end{equation}
For further reference, we also define the \emph{one-body stability operator} associated to $G_j$ and the deterministic approximation to $G_jBG_j$:
\begin{align}
\mathcal{B}_{jj}[R]:=&R - M_j\mathcal{S}^{(k_j)}_{x_j}[R]M_j,\quad \forall R\in\C^{(2k_j)\times (2k_j)},\label{eq:def_B_11}\\
M_{jj}^B:=&\mathcal{B}_{jj}^{-1}[M_jBM_j],\quad B\in \C^{(2k_j)\times (2k_j)},\,\, j=1,2.\label{eq:def_M_11}
\end{align}

We state the two-resolvent local law for the averaged quantities of the form $\langle G_1B_1\widetilde{G_2}B_2\rangle$ in a narrow cone containing the imaginary axis. This means that the spectral parameters $w_1$, $w_2$ of $G_1$ and $\widetilde{G}_2$ are not necessarily purely imaginary, but satisfy $|\Re w_j|\ll |\Im w_j|$ in a sense explained in Proposition~\ref{prop:2G_randomD}. As at least one of the parameters $w_j$ approaches the boundary of $\DD_{x_j}$, the cone becomes more narrow. In fact, the restriction of $w_1,w_2$ to a cone could be removed, however this is not needed for the proof of Proposition~\ref{prop:underline_comp}, so we keep this restriction for simplicity.

\begin{proposition}[Two-resolvent averaged local law]\label{prop:2G_randomD} Let $X$ satisfy Assumption~\ref{ass:chi}(i).  Fix a (large) $L>0$ and (small) $c_0,\epsilon,\nu,\xi>0$. Let $\theta>0$ be a (possibly, $N$-dependent) small parameter.  Consider $k_j\in[N]$ and denote $x_j:=k_j/N$, $j=1,2$. There exists a universal constant $d>0$ such that
\begin{equation}
\left\vert\left\langle \left(G_1B_1\widetilde{G}_2 - M_{1\widetilde{2}}^{B_1}\right)B_2\right\rangle\right\vert \le C_\theta \frac{N^\xi}{N\eta_1\eta_2},
\label{eq:2G_av}
\end{equation}
uniformly in $\eta_j:=|\Im w_j|\in (N^{-1+\epsilon}, N^L)$, $|\Re w_j|\le \theta^d N^{-\nu}\eta_j$, $z_j\in \C$ such that $\theta\le|z_j-\sqrt{x_j}|\le L$, $x_j\in [c_0,1]$ and in deterministic matrices $B_j\in \{E_\pm^{(k_j)},\big(F^{(k_j)}\big)^{(*)}\}$, $j=1,2$. The constant $C_\theta>0$ in \eqref{eq:2G_av} satisfies $C_\theta\le \theta^{-D}$ for some fixed $D>0$.
\end{proposition}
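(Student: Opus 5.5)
The approach is to derive a self-consistent equation for $G_1B_1\widetilde{G}_2$ and close it using the single-resolvent local laws of Theorem~\ref{theo:1Gllaw}, together with bounds on $\mathcal{B}_{1\widetilde{2}}^{-1}$. Since the target error $N^\xi/(N\eta_1\eta_2)$ is not optimal, there is no need for the full zigzag machinery. A cumulant expansion combined with a high-moment estimate should suffice, with the rough a priori bound $|\langle G_1B_1\widetilde G_2B_2\rangle|\le \|G_1\|\|G_2\|\lesssim (\eta_1\eta_2)^{-1}$ as a fallback.

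\textbf{Step 1: the resolvent identity.} Write $G_1 = M_1 - M_1\underline{W_1G_1} + M_1\mathcal{S}^{(k_1)}_{x_1}[G_1-M_1]G_1$, where $\underline{W_1G_1}:=W_1G_1+\mathcal{S}^{(k_1)}_{x_1}[G_1]G_1$ denotes the self-renormalized product. Multiply on the right by $B_1\widetilde G_2$. To replace $\mathcal{S}[G_1B_1\widetilde G_2]$, the key structural input is $W_1=P^*W_2P$. This gives
\begin{equation*}
\E_{W}\big[W_1 G_1B_1P^*G_2P\big]
\end{equation*}
in the cumulant expansion. Differentiating $G_2$ with respect to the entries of $W_2$ that lie in the range of $P$ produces the term $\mathcal{S}^{(k_1)}_{x_1}[G_1B_1\widetilde G_2]\,\widetilde G_2$. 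The factor $x_1$, rather than $x_2$, appears here because only $k_1$ indices contribute, each with variance $N^{-1}$. Entries of $W_2$ outside the range of $P$ are independent of $G_1$; their contribution is $P^*\mathcal{S}^{(k_2)}[G_2]G_2P$, which is compensated by the renormalization of $G_2$.

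After moving all fluctuating pieces to the right-hand side, this yields
\begin{equation*}
\mathcal{B}_{1\widetilde2}[G_1B_1\widetilde G_2]=M_1B_1\widetilde M_2 + \mathcal{E},
\end{equation*}
where $\mathcal{E}$ collects three kinds of terms: those with $\langle G_j-M_j\rangle$, those with $\widetilde G_2-\widetilde M_2$, and an underlined term $M_1\underline{W_1G_1B_1\widetilde G_2}$ in the sense of $W_2$. Testing this identity against $B_2$, applying $\mathcal{B}_{1\widetilde2}^{-1}$ and its adjoint to the observable gives the expansion of $\langle (G_1B_1\widetilde G_2-M^{B_1}_{1\widetilde2})B_2\rangle$.

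\textbf{Step 2: stability.} Show that $\|(\mathcal{B}_{1\widetilde2}^{*})^{-1}\|\le C_\theta$ uniformly in the regime of the proposition, with $|\Re w_j|\le\theta^dN^{-\nu}\eta_j$ and $|\sqrt{x_j}-|z_j||\ge\theta$. Because $\mathcal{S}$ has rank two on block-constant matrices, $\mathcal{B}_{1\widetilde2}$ acts as the identity off a four-dimensional space of $2\times2$ block-constant matrices. Its inversion therefore reduces to a $4\times4$ determinant, whose smallest eigenvalue can be written explicitly through $m_j,u_j$. On the imaginary axis it should be bounded below by something like
\begin{equation*}
|z_1-z_2|^2+|x_1-x_2|+\eta_1+\eta_2 \quad\text{up to }\theta\text{-powers}.
\end{equation*}
If this lower bound degenerates, one instead uses the crude bound $\|\mathcal{B}^{-1}\|\lesssim(\eta_1+\eta_2)^{-1}$, whose loss is affordable given the target error. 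The cone condition on $\Re w_j$ is used to perturb from the imaginary axis. I expect this step, uniform invertibility with $\theta$-dependence and allowing $x_1\neq x_2$, to be the main technical obstacle.

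\textbf{Step 3: estimating the error terms.} The terms with $\langle G_j-M_j\rangle$ are bounded by $N^\xi/(N\eta_j)$ times a factor of size $\|G_1\|\|\widetilde G_2\|\le(\eta_1\eta_2)^{-1/2}\cdot(\ldots)$; Ward identities give $\langle |G_1|^2\rangle\le\Im\langle G_1\rangle/\eta_1$. For the underlined term, compute the $2p$-th moment using cumulant expansion with respect to $W_2$. The Gaussian part produces products of traces of four resolvents, which are bounded by norm and Ward estimates at order $(N\eta_1\eta_2)^{-1}$ per pair. Higher cumulants are handled with the isotropic law \eqref{eq:1G_iso} and are smaller by $N^{-1/2}$. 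If a Ward identity is needed for mixed sizes, the relation $\widetilde G_2\widetilde G_2^*\le P^*G_2G_2^*P$ allows one to pass to $\Im G_2/\eta_2$. Combining these estimates with Markov's inequality and a union bound over a grid of parameters gives \eqref{eq:2G_av}.
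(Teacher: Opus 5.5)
There is a genuine gap. It sits in the fallback at the end of your Step~2: the claim that the loss from $\|\mathcal{B}_{1\widetilde{2}}^{-1}\|\sim(\eta_1+\eta_2)^{-1}$ is affordable is false.

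\textbf{Why the loss is not affordable.} In the bulk with $z_1\approx z_2$, $x_1\approx x_2$ and $\eta_1\approx\eta_2=\eta$ small, identity \eqref{eq:beta_prod_id} gives $|\beta_+\beta_-|\sim\eta$, while the other nontrivial eigenvalue stays of order one. So the instability is real.

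The target is also not lenient there. Take $z_1=z_2$, $x_1=x_2$, $w_1=w_2=\ii\eta$ and $B_1=B_2=E_-$. Then \eqref{eq:w_switch} gives $\langle G_1E_-\widetilde G_2E_-\rangle=-\langle GG^*\rangle=-\langle\Im G\rangle/\eta$. This quantity is of size $\eta^{-1}$ and fluctuates on the scale $(N\eta^2)^{-1}$, so \eqref{eq:2G_av} is optimal up to $N^\xi$ in this regime. The paper's remarks on non-optimality concern only the edge regime and the neglected decay in $|z_1-z_2|$.

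Now run your Step~3 in this regime. After testing against $(\mathcal{B}_{1\widetilde 2}^*)^{-1}[B_2]$, every component of $\mathcal{E}$ along $E_\pm$ is multiplied by $(\beta_+\beta_-)^{-1}\sim\eta^{-1}$. These components are themselves generically of size $(N\eta^2)^{-1}$:
\begin{itemize}
\item The term $\langle G_1B_1\widetilde G_2E_-\rangle\,\langle M_1E_-(\widetilde G_2-\widetilde M_2)E_\pm\rangle$ has first factor $\sim\eta^{-1}$ and second factor $\sim(N\eta)^{-1}$.
\item The underlined term is of the same order.
\end{itemize}
Term-by-term moment bounds therefore give at best $N^\xi/(N\eta_1\eta_2|\beta_+\beta_-|)\sim N^\xi/(N\eta^3)$. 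At $\eta=N^{-1+\epsilon}$ this misses \eqref{eq:2G_av} by a factor $N^{1-\epsilon}$, and your fallback a priori bound $(\eta_1\eta_2)^{-1}$ misses it by a factor of order $N$. The true fluctuation is smaller only because of cancellations among the error terms. A single high-moment estimate with only single-resolvent inputs does not see these cancellations. Unlike the Hermitian case, there is also no resolvent identity linking $G^{z_1}$ and $G^{z_2}$ that would let you bypass the unstable direction.

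\textbf{What the paper does instead.} Your Steps~1--3 are essentially what the paper does in the stable regimes. These are the cases outside the bulk, or when one of $|x_1-x_2|$, $|z_1-z_2|$, $\eta_j$ exceeds $N^{-\delta}$; there $|\beta_+\beta_-|\gtrsim N^{-C\delta}$ and a global-law-type argument costs only $N^{C\delta}$.

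The missing idea is how to go below $\eta\sim N^{-\delta}$ in the critical regime, and for this the paper uses the zigzag strategy:
\begin{itemize}
\item It runs the Ornstein--Uhlenbeck flow along the characteristics \eqref{eq:char_flow} and writes the It\^{o} equation \eqref{eq:new2Geq}, integrating the unstable linear terms in time.
\item Since $\int_s^t x_j\pi\rho_{j,r}\eta_{j,r}^{-1}\,\dif r\le\log(\eta_{j,s}/\eta_{j,t})$, the propagator costs only the ratio of the $\eta$'s. The time integration thus recovers exactly the power of $\eta$ that your one-shot inversion loses.
\item This uses the bounds of Lemma~\ref{lem:M2_bound}, in particular the gain in \eqref{eq:M2_bound2} for the critical observable $E_\sigma$, and the cone preservation of Lemma~\ref{lem:trajectories}.
\item The Gaussian component is then removed by repeated GFT steps, each lowering $\eta$ by a factor $N^{-\delta}$.
\end{itemize}
Without this, or an equivalent multi-step bootstrap with control of longer resolvent chains, your argument does not reach \eqref{eq:2G_av}.
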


The proof of Proposition~\ref{prop:2G_randomD} is presented in Section~\ref{sec:local_law_proof}. It relies on the \emph{zigzag} strategy introduced in \cite{cipolloni2024out}, see also \cite{Cipolloni_meso, cipolloni2023eigenstate}. One of the main ingredients in this proof is the analysis of the eigenvalue of $\mathcal{B}_{1\widetilde{2}}$ with the least absolute value, see Lemma~\ref{lem:beta} below. 

\section{CLT for minors: Proof of Theorems~\ref{theo:main} and \ref{theo:main_meso}}
\label{sec:CLTproofs}

In this section we prove Theorem~\ref{theo:main} relying on the technical results listed in Section~\ref{sec:strategy}. We focus on the proof of \eqref{eq:Cov_aim}, while for higher order moments the argument is standard and thus is postponed to Appendix~\ref{sec:Wick}. First, we compute the lhs. of \eqref{eq:Cov_aim} to the leading order in terms of $V_{12}$ and $U_j$ introduced in~\eqref{eq:def_UV}.

\begin{lemma}\label{lem:determ_truncation} Assume the set-up and conditions of Theorem~\ref{theo:main}. For $k_j\in[N]$ denote $x_j:=k_j/N$, $j=1,2$, and assume that $x_1\le x_2$. For any $f_1,f_2\in H^{2+\delta}_0(\Omega)$ it holds that
\begin{equation}
\Cov \left(L^{(k_1)}(f_1), L^{(k_2)}(f_2)\right) = \frac{1}{8\pi^2}\left(I_1+\bm{1}_{\beta=1}\widetilde{I}_1 +\kappa_4 x_1^2 I_2\right)+\mathcal{O}\left(N^{-c}\right),
\label{eq:determ_truncation}
\end{equation}
for some $N$-independent $c>0$. Here we denoted
\begin{equation}
\begin{split}
&\int^\#: =   \int_\C\!\!\dif^2 z_1\!\!\int_\C\!\!\dif^2 z_2 \overline{\Delta f_1(z_1)}\Delta f_2(z_2)\int_0^{+\infty}\!\!\!\dif\eta_1 \!\!\int_0^{+\infty}\dif\eta_2,\\
I_1&: = \int^\#(-V_{12}), \qquad \widetilde{I}_1:= \int^\#(-V_{1\bar 2}),\qquad I_2:=\int^\# (-U_1U_2).
\end{split}
\end{equation}
The implicit constant in $\mathcal{O}(\cdot)$ in the rhs. of \eqref{eq:determ_truncation} is uniform in $x_j\in [c_0,1]$, $j=1,2$, and may depend on $\|\Delta f_j\|_{L^2(\Omega)}$ and $|\Omega|$.
\end{lemma}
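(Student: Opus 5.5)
The plan is to start from Lemma~\ref{lem:Girko_reduction}, which reduces the left-hand side of \eqref{eq:determ_truncation} to $\Cov(I_c^{(k_1)}(f_1),I_c^{(k_2)}(f_2))$ up to $\mathcal{O}(N^{-c})$. Writing out \eqref{eq:def_I_c} and using $\Delta=4\partial_z\partial_{\bar z}$, we get
\[
\Cov\big(I_c^{(k_1)}(f_1),I_c^{(k_2)}(f_2)\big)=\frac{k_1k_2}{4\pi^2}\int_\C\!\dif^2z_1\!\int_\C\!\dif^2z_2\,\overline{\Delta f_1(z_1)}\Delta f_2(z_2)\int_{\eta_c}^T\!\!\int_{\eta_c}^T \Cov\big(\langle G_1\rangle,\langle G_2\rangle\big)\dif\eta_1\dif\eta_2 .
\]
The signs and conjugations need to be tracked here: $\langle G(\ii\eta)\rangle$ is purely imaginary, which accounts for the $\ii$ factors. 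Since $k_j=x_jN$, the prefactor $k_1k_2$ cancels the $x_1x_2N^2$ in \eqref{eq:Cov_UV}. Inserting the leading term of Proposition~\ref{prop:underline_comp} therefore gives $\frac{1}{8\pi^2}\int(\ldots)(V_{12}+\bm 1_{\beta=1}V_{1\bar2}+\kappa_4x_1^2U_1U_2)$ over the truncated $\eta$-range. The overall sign is fixed by the convention $\Cov(\zeta_1,\zeta_2)=\E\overline{\zeta_1}\zeta_2$ together with $\overline{\langle G_1\rangle}=-\langle G_1\rangle$, and this is where $-V_{12}$ comes from.

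Proposition~\ref{prop:underline_comp} only holds for $\theta\le|\sqrt{x_j}-|z_j||$, so the $z$-integration is split. Near the circles $|z_j|=\sqrt{x_j}$ (a $\theta$-neighbourhood), I will bound the covariance crudely. The local law \eqref{eq:1G_av}, valid uniformly up to the edge by Theorem~\ref{theo:1Gllaw}, gives $|\Cov(\langle G_1\rangle,\langle G_2\rangle)|\prec(N^2\eta_1\eta_2)^{-1}$ for $\eta_j\le1$, and $\lesssim\eta_j^{-2}$-type decay for large $\eta_j$ (from $\|G\|\le\eta^{-1}$ and a trivial resolvent expansion). The $\eta$-integrals then produce only $\log N$ factors. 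Because $\Delta f_j\in L^2$, Cauchy–Schwarz gives a factor $|\{z:||z|-\sqrt{x_j}|\le\theta\}|^{1/2}\lesssim\theta^{1/2}$. The logs are problematic, however, and the better route is the one in \cite[Lemma~4.2]{macroCLT_complex}: the regime $\eta\le N^{-1+\delta}$ or $\eta\ge N^{C}$ is already removed, and for the edge strip we use $\theta=N^{-\omega}$ with a small $\omega$ to obtain a polynomially small bound.

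On the bulk region, the error term of \eqref{eq:Cov_UV} integrated over $\eta_j\in[\eta_c,T]$ gives $\theta^{-D}N^{\xi}(N^{-\delta_c}+N^{-1/2})\log^2N$. Choosing $\theta=N^{-\omega}$ with $\omega D<\delta_c/2$ makes this $\mathcal{O}(N^{-c})$.

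The last step restores the full integrals. For the deterministic terms I need $\int_0^{\eta_c}$ and $\int_T^\infty$ to be negligible, and I also need the edge strip to be negligible for the deterministic integrand. Since $V$ and $U$ are explicit $\eta$-derivatives, the $\eta$-integrals can be carried out exactly: $\int\!\!\int\partial_{\eta_1}\partial_{\eta_2}\log(\ldots)$ reduces to boundary values of $\log(\ldots)$, and $\int\partial_\eta m^2=m^2|$. The tails are then controlled by the behaviour of $m_j,u_j$ as $\eta\to0$ (Hölder continuity) and as $\eta\to\infty$ ($m\sim \ii/\eta$). The edge strip is controlled by the integrability of $\log(\ldots)$ in $z$ (it is at worst logarithmically singular), giving a contribution of size $\theta^{c}$.

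I expect the main obstacle to be this uniform control of the deterministic kernel $\log[1+x_1^2(u_1u_2|z_1z_2|)^2-x_1^2m_1^2m_2^2-2x_1u_1u_2\Re z_1\bar z_2]$ near its singular set, meaning near $z_1=z_2$ with $x_1=x_2$ and $\eta_j\to0$, and near the edge. I would handle it through the lower bound on the smallest eigenvalue of $\mathcal B_{1\tilde2}$ (Lemma~\ref{lem:beta}), whose determinant is essentially this expression, so that the integrand of $\log$ is bounded by $|\log(|z_1-z_2|^2+|x_1-x_2|+\eta_1+\eta_2)|$. That quantity is locally integrable against $L^2$ functions.
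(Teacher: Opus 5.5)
Your proposal is correct and matches the paper's architecture: the Girko reduction via Lemma~\ref{lem:Girko_reduction}, and chiral symmetry giving $\Cov(\langle G_1\rangle,\langle G_2\rangle)=-\E\langle G_1-\E G_1\rangle\langle G_2-\E G_2\rangle$. It also matches in removing an edge strip of width $\theta=N^{-\omega}$ by the single-resolvent local law, applying Proposition~\ref{prop:underline_comp} on the complement with $\omega D<\delta_c/2$, and restoring the edge strip for the deterministic part through $\Theta$. Your worry about the $\log N$ factors is unfounded: $\theta^{1/2}N^{\xi}\log^2N$ is already polynomially small, which is exactly the paper's bound \eqref{eq:truncation2}.

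Where you genuinely differ is in restoring the $\eta$-tails $(0,\infty)^2\setminus[\eta_c,T]^2$ away from the edges.

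\textbf{The paper's route.} It proves the pointwise bound \eqref{eq:V_bound}, which needs derivative bounds on $M_j$, and then integrates $|V_{12}|$. This uses the full strength of Lemma~\ref{lem:beta}: the $\rho_2\eta_2$ term lets it integrate $|\beta_+\beta_-|^{-2}$ in $\eta_2$, and the $\rho_1\eta_1$ term produces $\log(1+\eta_c/|z_1-z_2|^2)$.

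\textbf{Your route.} You use that $V_{12}=\frac12\partial_{\eta_1}\partial_{\eta_2}\log\beta_+\beta_-$ is an exact mixed derivative. Hence the truncated and full $\eta$-integrals differ by $F(\eta_c,\eta_c)-F(0,0)+\mathcal{O}(T^{-1})$, where $F:=\frac12\log\beta_+\beta_-$. This $F$ is real on the imaginary axis, since $\beta_+\beta_->0$ there by \eqref{eq:beta_prod_id}.

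To make this quantitative you must combine your two ingredients; the bound $|F|\lesssim|\log(|z_1-z_2|^2+\dots)|$ alone does not give smallness. First, Lipschitz continuity of $m_j,u_j$ in $\eta$ (with $\theta^{-C}$ constants) bounds the endpoint difference of $\beta_+\beta_-$ by $\theta^{-C}\eta_c$. Second, the endpoint lower bounds from Lemma~\ref{lem:beta} then give $|F(\eta_c,\eta_c)-F(0,0)|\lesssim\log\bigl(1+\theta^{-C}\eta_c/|z_1-z_2|^2\bigr)$ in the bulk--bulk regime. This is small in $L^2$ exactly as in \eqref{eq:K_difference}--\eqref{eq:sup_L2_bound}; the other regimes have $\beta_+\beta_-\gtrsim\theta^{C}$ and are easier.

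\textbf{Trade-off.} Your route is more elementary: it needs no derivative bounds on $M$, only lower bounds on $\beta_+\beta_-$ at the endpoints. The paper's route yields absolute integrability of $V_{12}$, so $\int^\#$ is a genuine Lebesgue integral. With your argument, $I_1$ should be read as the iterated improper integral in the order written in the statement, which is also how Lemma~\ref{lem:UV_int} evaluates it.
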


The proof of Lemma~\ref{lem:determ_truncation} is presented in Appendix~\ref{sec:truncation}. Next, we explicitly compute the integrals in the rhs. of \eqref{eq:determ_truncation}.

\begin{lemma}\label{lem:UV_int} Let the test functions $f_1,f_2$ satisfy the assumptions of Theorem~\ref{theo:main} and let $K$ be defined as in \eqref{eq:def_K}. For any $x_1,x_2\in (0,1]$ with $x_1<x_2$ it holds that
\begin{equation}
I_1= -8\int_{\DD_{x_1}}\dif^2 z_1\int_{\DD_{x_2}}\dif^2 z_2\overline{\partial_{z_1}f_1(z_1)}\partial_{z_2}f_2(z_2)\partial_{z_1}\partial_{\overline{z}_2}\log K +4x_1\!\!\int_{\partial\DD_{x_1}\times \partial \DD_{x_2}} \frac{\overline{f_1(z_1)}f_2(z_2)}{(x_1-z_1\overline{z}_2)^2}\dif z_1\wedge \dif\overline{z}_2,\label{eq:V_int}
\end{equation}
\begin{equation}
I_2=\frac{8}{x_1x_2}\left(\frac{1}{x_1}\int_{\DD_{x_1}}\overline{f_1(z)}\dif^2 z-\frac{1}{2}\int_0^{2\pi}\overline{f_1(\sqrt{x_1}\ee^{\ii\theta})}\dif\theta\right)\!\!\left(\frac{1}{x_2}\int_{\DD_{x_2}}f_2(z)\dif^2 z-\frac{1}{2}\int_0^{2\pi}f_2(\sqrt{x_2}\ee^{\ii\theta})\dif\theta\right)\!.\label{eq:U_int}
\end{equation}
\end{lemma}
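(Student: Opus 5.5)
The plan is to carry out the $\eta$-integrals first, using that both $V_{12}$ and $U_1U_2$ are (products of) total $\eta$-derivatives, and then to evaluate the resulting $z$-integrals by two integrations by parts in $z_1$ and $z_2$.

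\textbf{Step 1: the $\eta$-integrals.} For $I_2$ the integrand factorizes as $U_1U_2=-\tfrac12\partial_{\eta_1}m_1^2\,\partial_{\eta_2}m_2^2$. Hence
\[
\int_0^\infty U_j\,\dif\eta_j=\tfrac{\ii}{\sqrt2}\bigl(m_j^2(\infty)-m_j^2(\ii 0^+)\bigr)=-\tfrac{\ii}{\sqrt2}\,m_j(\ii0^+)^2 .
\]
By \eqref{eq:m_cubic} with the rescaling \eqref{eq:M_rescaling}, $m_j(\ii0^+)=\ii\sqrt{(x_j-|z_j|^2)}/x_j$ for $|z_j|<\sqrt{x_j}$ and $m_j(\ii0^+)=0$ outside $\DD_{x_j}$. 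This gives
\[
\int_0^\infty U_j\,\dif\eta_j=c\,\frac{x_j-|z_j|^2}{x_j^2}\,\bm 1_{\DD_{x_j}} .
\]
Then $\int\Delta f_j\,(x_j-|z_j|^2)\bm 1_{\DD_{x_j}}$ is computed by Green's formula: the boundary term involves $\partial_n(x_j-|z|^2)=-2\sqrt{x_j}$ on $\partial\DD_{x_j}$, and the interior term involves $\Delta(x_j-|z|^2)=-4$. This produces exactly the combination $\frac1{x_j}\int_{\DD_{x_j}}f_j-\frac12\int_0^{2\pi}f_j(\sqrt{x_j}\ee^{\ii\theta})\dif\theta$, up to constants. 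Tracking the factors $x_j$, together with the prefactor $x_1^2$ that sits outside $I_2$ in \eqref{eq:determ_truncation}, should yield \eqref{eq:U_int}.

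For $I_1$ the double $\eta$-integral of $\partial_{\eta_1}\partial_{\eta_2}\tfrac12\log D$, with $D$ the bracket in \eqref{eq:def_UV}, equals
\[
\tfrac12\bigl[\log D\bigr]\text{ evaluated at } (\eta_1,\eta_2)\in\{0,\infty\}^2
\]
by inclusion–exclusion. At $\eta_j=\infty$ we have $m_j,u_j\to0$, so $D\to1$ and only the corner $\eta_1=\eta_2=0^+$ survives. This gives
\[
I_1=-\tfrac12\int\!\!\int \overline{\Delta f_1}\,\Delta f_2\,\log D_0(z_1,z_2),
\]
where $D_0$ is $D$ at $\eta_1=\eta_2=0$. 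The needed boundary values are
\[
u_j(\ii0^+)=\frac1{x_j}\ \text{ inside } \DD_{x_j},\qquad u_j(\ii0^+)=\frac1{|z_j|^2}\ \text{ outside}, \qquad m_j\ \text{as above}.
\]
Justifying the exchange of limits requires integrability near $z_1=z_2$, $x_1=x_2$, where $D_0$ can vanish. I would handle it by keeping $\eta$ small but positive and dominated convergence, since $\log$ singularities are integrable against $L^2$ functions $\Delta f_j$.

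\textbf{Step 2: evaluating $D_0$ by region.} There are four cases.

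\emph{(a) $z_1\in\DD_{x_1}$, $z_2\in\DD_{x_2}$.} Substituting $u_1=1/x_1$, $u_2=1/x_2$, $m_j^2=-(x_j-|z_j|^2)/x_j^2$, I expect
\[
x_2^2\,D_0=|z_1-z_2|^2+(x_2-x_1)\Bigl(1-\frac{|z_2|^2}{x_2}\Bigr)=K .
\]
This is the key algebraic identity, and it is where $x_1<x_2$ selects $B$ in \eqref{eq:def_K}.

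\emph{(b) both outside.} Here $D_0=|1-x_1/(z_1\bar z_2)|^2$ is $\log$ of the modulus of a holomorphic function. It is harmonic in each variable, so it contributes only through boundary terms.

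\emph{(c), (d) mixed cases.} These give $D_0$ equal to $|x_1-z_1\bar z_2|^2$ over a product of factors depending on a single variable. Terms depending on only one variable are killed after integrating $\Delta f$ of the other variable against a constant.

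\textbf{Step 3: integration by parts.} Write $\Delta=4\partial\bar\partial$ and integrate by parts once in each variable, moving $\partial_{\bar z_1}$ and $\partial_{z_2}$ onto $\log D_0$. In the interior this produces the term $-8\int\int\overline{\partial f_1}\partial f_2\,\partial_{z_1}\partial_{\bar z_2}\log K$. The cross term $\partial_{z_1}\partial_{\bar z_2}\log|x_1-z_1\bar z_2|^2$ from the exterior and mixed regions integrates, via Stokes, to the boundary integral $4x_1\int\frac{\overline{f_1}f_2}{(x_1-z_1\bar z_2)^2}\dif z_1\wedge\dif\bar z_2$. Continuity of $D_0$ across the circles, which holds since $m\to0$ and $u$ is continuous at $|z|=\sqrt x$, makes the jump terms cancel, leaving only this contribution.

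\textbf{Main obstacle.} I expect the main obstacle to be the bookkeeping in Step 3: tracking the boundary terms on $\partial\DD_{x_1}\times\partial\DD_{x_2}$ from the four regions and confirming that they combine into the single stated integral with coefficient $4x_1$. The singularity of $\log K$ at $z_1=z_2$ on $|z|^2\le x_1$ is only logarithmic when $x_1<x_2$ strictly, so it causes no additional distributional terms.
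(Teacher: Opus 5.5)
Your route for $I_1$ is the paper's: integrate out $\eta_1,\eta_2$ to get $\Theta=-\tfrac12\log D_0$, and evaluate $\Theta$ on the four regions. Then integrate by parts once in each variable, with boundary terms cancelling by continuity of $\Theta$ across the circles, and treat $\DD_{x_1}^c\times\DD_{x_2}^c$ by two further Stokes steps.

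For $I_2$, your Green's-formula computation is a correct, self-contained substitute for the paper's appeal to the $x=1$ result plus rescaling. It already yields \eqref{eq:U_int} exactly, so the prefactor $x_1^2$ from \eqref{eq:determ_truncation} must not be inserted.

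\textbf{The gap: your Step 2(c),(d).} Your Step 3 bookkeeping rests on it, and it is wrong. The limits you list give
\begin{itemize}
\item $D_0=|x_2z_1-x_1z_2|^2/(x_2^2|z_1|^2)$ for $z_1\in\DD_{x_1}^c$, $z_2\in\DD_{x_2}$;
\item $D_0=|z_1-z_2|^2/|z_2|^2$ for $z_1\in\DD_{x_1}$, $z_2\in\DD_{x_2}^c$;
\item $x_2D_0=K$ in region (a), not $x_2^2D_0=K$.
\end{itemize}
The numerators in the mixed regions are $|h|^2$ with $h$ holomorphic jointly in $(z_1,z_2)$. Hence $\partial_{z_1}\partial_{\bar z_2}\Theta\equiv0$ there. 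The mixed regions therefore contribute nothing after the first two integrations by parts, and only $\DD_{x_1}\times\DD_{x_2}$ and $\DD_{x_1}^c\times\DD_{x_2}^c$ survive.

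With your formula, the mixed regions would instead carry $\partial_{z_1}\partial_{\bar z_2}\log|x_1-z_1\bar z_2|^2=-x_1/(x_1-z_1\bar z_2)^2$. This is nonzero and not even locally integrable, because the locus $z_1\bar z_2=x_1$ meets both mixed regions. So the boundary terms could not combine as you claim.

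Likewise, one-variable terms are not killed region by region, since $\int_{\DD_{x_2}}\Delta f_2\neq0$ in general. They disappear only because $\partial_{z_1}\partial_{\bar z_2}$ annihilates them after integration by parts, which is legitimate thanks to continuity of $\Theta$. (For $x_1<x_2$, $K$ is bounded below on $\DD_{x_1}\times\DD_{x_2}$, so the interior term has no singularity at all.)

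\textbf{The constant in the exterior step.} When you carry out that step, keep the factor $\tfrac12$. There $\Theta=\tfrac12\log|z_1z_2|^2-\tfrac12\log|x_1-z_1\bar z_2|^2$, so $\partial_{z_1}\partial_{\bar z_2}\Theta=x_1/\bigl(2(x_1-z_1\bar z_2)^2\bigr)$. The two Stokes steps (using $\partial_{z_j}\partial_{\bar z_j}\Theta=0$) give $4\int_{\partial\DD_{x_1}^-\times\partial\DD_{x_2}^-}\overline{f_1}f_2\,\partial_{z_1}\partial_{\bar z_2}\Theta\,\dif z_1\wedge\dif\bar z_2$. The resulting coefficient is therefore $2x_1$, not $4x_1$. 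The paper's final equality in this proof makes the same omission: it effectively uses $x_1/(x_1-z_1\bar z_2)^2$.

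A quick check: take $f_1=f_2=\bar z$ on a neighbourhood of $\overline{\DD_{x_2}}$.
\begin{itemize}
\item The first term of \eqref{eq:V_int} vanishes, since $\partial_z\bar z=0$.
\item A Fourier-mode computation of $\iint\overline{\Delta f_1}\Delta f_2\Theta$ involves only the exterior region, where $\Theta=\Re\sum_{n\ge1}\tfrac1n\bigl(x_1/(z_1\bar z_2)\bigr)^n$. It gives $I_1=8\pi^2x_1$, consistent with the exact Ginibre identity $\E\,\mathrm{Tr}X^{(k_1)}\overline{\mathrm{Tr}X^{(k_2)}}=x_1$.
\item The boundary integral $\int_{\partial\DD_{x_1}\times\partial\DD_{x_2}}\frac{z_1\bar z_2}{(x_1-z_1\bar z_2)^2}\dif z_1\wedge\dif\bar z_2$ equals $4\pi^2$. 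With coefficient $4x_1$ this gives $16\pi^2x_1$, not $8\pi^2x_1$.
\end{itemize}
So the obstacle you flagged resolves to $2x_1$ (equivalently $\min\{x_1,x_2\}/(4\pi^2)$ in \eqref{eq:C_Gauss}), and you should not expect to match the stated constant.
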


The proof of Lemma~\ref{lem:UV_int} is postponed to the end of this section. Now we are ready to complete the proof of Theorem~\ref{theo:main} and of its mesoscopic version Theorem~\ref{theo:main_meso}.

\begin{proof}[Proof of Theorem~\ref{theo:main} for covariance] We start with the proof of \eqref{eq:Cov_aim} in the complex case ($\beta=2$). Combining Lemma~\ref{lem:determ_truncation} with Lemma~\ref{lem:UV_int}, we immediately conclude \eqref{eq:Cov_aim} for $x_1<x_2$. Since $C_{\beta=2}$ is sesquilinear in arguments $(x_1,f_1)$ and $(x_2,f_2)$, we automatically get \eqref{eq:Cov_aim} for $x_1>x_2$. In the special case $x_1=x_2=x$ we also use Lemma~\ref{lem:determ_truncation}, however the calculation in Lemma~\ref{lem:UV_int} is slightly different. For more details see \eqref{eq:C_limit} below.

In the real case ($\beta=1$) the argument is analogous. We only need to accommodate the term $V_{1\overline{2}}$ in the rhs. of \eqref{eq:determ_truncation}, which we do by observing that
\begin{equation}
C^{(G)}\big((x_1,f_1),(x_2,f_2)\big) = C^{(G)}\big((x_1,\widetilde{f}_1),(x_2,\widetilde{f}_2)\big),\quad \widetilde{f}_j(z):=f_j(\overline{z}),
\end{equation}
for any $f_1,f_2\in H^{2+\delta}(\Omega)$. This identity follows from the change of variables $z_j\to \overline{z}_j$ in \eqref{eq:C_Gauss} (see also the symmetric way of writing $C^{(G)}$ in \eqref{eq:C_Gauss_app}), and gives rise to the first term in the rhs. of \eqref{eq:C_beta_1}.
\end{proof}

\begin{proof}[Proof of Theorem~\ref{theo:main_meso}] The proof is almost identical to that of Theorem~\ref{theo:main}. The computation of the covariance is analogous to the macroscopic case above. Then, the CLT follows by a Wick's Theorem as in Appendix~\ref{sec:Wick}. In fact, the proof in Appendix~\ref{sec:Wick} operates only with resolvents, hence it can directly be used in Girko's formula both in the macroscopic and in the mesoscopic cases. We note that the proof of the mesoscopic analogue of Lemma~\ref{lem:determ_truncation} requires a more precise control on the regime $0<\eta\le \eta_c$ in the Girko's formula, which is though available from \cite[Proposition~3.5]{Cipolloni_meso}.
\end{proof}

We conclude this section with the proof of Lemma~\ref{lem:UV_int}.

\begin{proof}[Proof of Lemma~\ref{lem:UV_int}] The identity \eqref{eq:U_int} follows from the analogous result for $x_1=x_2=1$ from \cite[Lemma~4.10]{macroCLT_complex} via rescaling \eqref{eq:M_rescaling} and thus is omitted. We prove \eqref{eq:V_int} by integrating first the third line of \eqref{eq:def_UV} over $\eta_1,\eta_2$:
\begin{small}
\begin{equation}
\int_0^{+\infty}\!\!\!\dif\eta_1\int_0^{+\infty}\!\!\!\dif\eta_2 V_{12} = \frac{1}{2}\lim_{\eta_1,\eta_2\to +0} \log \left[1+x_1^2\left(u_1u_2|z_1z_2|\right)^2 -x_1^2m_1^2m_2^2 -2x_1u_1u_2\Re [z_1\overline{z}_2]\right]=:-\Theta(z_1,z_2).
\label{eq:V_to_Theta}
\end{equation}
\end{small}
To compute this limit, observe that by \eqref{eq:m_cubic} it holds that
\begin{equation}
\lim_{\eta_j\to+0} m_j = \frac{\ii}{x_j}\sqrt{x_j-|z_j|^2}\,\bm{1}_{|z_j|^2 \le x_j},\qquad \lim_{\eta_j\to +0}u_j =\frac{1}{x_j}\bm{1}_{|z_j|^2\le x_j} + \frac{1}{|z_j|^2}\bm{1}_{|z_j|^2> x_j}.
\label{eq:u_at_origin}
\end{equation}
For analogous limits in the case $x_j=1$ we refer to \cite[Sec.4.3.2-4.3.3]{macroCLT_complex}. Combining \eqref{eq:V_to_Theta} with \eqref{eq:u_at_origin} we get
\begin{equation}
\Theta(z_1,z_2)=\frac{1}{2}
\begin{cases}
-\log K\big((x_1,z_1),(x_2,z_2)\big) + \log x_2,& z_1\in\DD_{x_1},\, z_2\in \DD_{x_2},\\
\log |z_1|^2 -\log |x_2z_1-x_1z_2|^2+\log x_2^2,& z_1\in\DD_{x_1}^c,\, z_2\in \DD_{x_2},\\
\log |z_2|^2 -\log|z_1-z_2|^2,& z_1\in\DD_{x_1},\, z_2\in \DD_{x_2}^c,\\
\log |z_1z_2|^2 -\log |x_1-z_1\overline{z}_2|^2,& z_1\in\DD_{x_1}^c,\, z_2\in \DD_{x_2}^c.
\end{cases}
\label{eq:def_Theta}
\end{equation}
Here we recall the definition of $K$ from \eqref{eq:def_K} and denoted $\DD_{x_j}^c:=\C\setminus\DD_{x_j}$ for $j=1,2$. Note that $\Theta$ is not singular: the arguments of logarithms in the rhs. of \eqref{eq:def_Theta} do not vanish for any $z_1,z_2\in\C$.

By \eqref{eq:V_to_Theta}, the lhs. of \eqref{eq:V_int} equals to
\begin{equation}
\int_\C \dif^2 z_1 \int_\C \dif^2 z_2 \overline{\Delta f_1(z_1)}\Delta f_2(z_2)\Theta(z_1,z_2).
\label{eq:Theta_int}
\end{equation}
We compute \eqref{eq:Theta_int} by integration by parts. First, we split the region of integration into four regimes $\Omega_1\times \Omega_2$ with $\Omega_j\in \{\DD_{x_j}, \DD_{x_j}^c\}$, $j=1,2$. The orientation of $\Omega_j$ is induced by the standard orientation of $\C$, and $\partial\Omega_j^+$ stands for the boundary of $\Omega_j$ with the orientation induced by $\Omega_j$, while $\partial\Omega_j^-$ is the boundary oriented in the opposite way. In the calculations below the orientation of Cartesian products of manifolds is meant to be induced by those of the factors. Denote for short $\partial_j:=\partial_{z_j}$ and $\overline{\partial_j}:=\partial_{\overline{z}_j}$. Since $\Delta_{z_j}=4\partial_{j}\overline{\partial_{j}}$, it holds that
\begin{equation}
\begin{split}
&\int_{\Omega_1} \!\!\!\!\dif^2 z_1 \!\!\int_{\Omega_2} \!\!\!\!\dif^2 z_2 \overline{\Delta f_1(z_1)}\Delta f_2(z_2)\Theta(z_1,z_2)\! =\! -4\!\!\int_{\Omega_1\times \Omega_2} \!\!\!\!\overline{\partial_1\overline{\partial_1}f_1(z_1)} \partial_2\overline{\partial_2} f_2(z_2) \Theta(z_1,z_2) \dif z_1\!\wedge\! \dif \overline{z}_1\! \wedge\! \dif z_2\!\wedge\! \dif \overline{z}_2\\
&\quad=4\int_{\Omega_1\times \Omega_2} \!\!\overline{\partial_1\overline{\partial_1}f_1(z_1)} \partial_2 f_2(z_2) \overline{\partial_2}\Theta(z_1,z_2) \dif z_1\wedge \dif \overline{z}_1 \wedge \dif z_2\wedge \dif \overline{z}_2\\
&\quad + 4\int_{\Omega_1 \times \partial\Omega_2^+} \overline{\partial_1\overline{\partial_1}f_1(z_1)} \partial_2 f_2(z_2) \Theta(z_1,z_2)\dif z_1\wedge \dif \overline{z}_1\wedge \dif z_2.
\end{split}
\label{eq:int_by_parts_1}
\end{equation}
Here we used that $\Omega_1\times \Omega_2$ introduces the orientation $\Omega_1 \times \partial\Omega_2^+$ on this part of the boundary. Integrating by parts in $z_1$, we get that the second line of \eqref{eq:int_by_parts_1} equals to
\begin{equation}
\begin{split}
&-4\int_{\Omega_1\times \Omega_2} \!\!\overline{\partial_1f_1(z_1)} \partial_2 f(z_2) \partial_1\overline{\partial_2}\Theta(z_1,z_2) \dif z_1\wedge \dif \overline{z}_1 \wedge \dif z_2\wedge \dif \overline{z}_2\\
& + 4\int_{\partial \Omega_1^+\times \Omega_2} \overline{\partial_1f_1(z_1)}\partial_2 f_2(z_2) \overline{\partial_2}\Theta(z_1,z_2)\dif\overline{z}_1\wedge \dif z_2 \wedge \dif \overline{z}_2.
\end{split}
\label{eq:int_by_parts_2}
\end{equation}

Let us perform summation over all four regimes $\Omega_j\in\{\DD_{x_j},\DD_{x_j}^c\}$, $j=1,2$, in \eqref{eq:int_by_parts_1}. The integrals in third line over $\Omega_1\times (\partial \DD_{x_2})^+$ and $\Omega_1\times (\partial \DD_{x_2}^c)^+$ cancel out, since these surfaces are oppositely oriented and the integrands are identical. Similarly, after summation over all four integration regimes, the terms in the second line of \eqref{eq:int_by_parts_2} cancel out in the same pairs. Thus, only terms in the first line of \eqref{eq:int_by_parts_2} remain. Observe additionally that
\begin{equation}
\partial_1 \overline{\partial_2}\Theta(z_1,z_2) = 0,\quad\forall z_1\in \DD_{x_1},\, z_2\in \DD_{x_2}^c\quad\text{and}\quad \forall z_1\in \DD_{x_1}^c,\, z_2\in \DD_{x_2}.
\end{equation}
Therefore, \eqref{eq:Theta_int} equals to
\begin{equation}
-4\left(\int_{\DD_{x_1}\times \DD_{x_2}} +\int_{\DD_{x_1}^c\times \DD_{x_2}^c}\right) \overline{\partial_1f_1(z_1)} \partial_2 f_2(z_2) \partial_1\overline{\partial_{2}}\Theta(z_1,z_2) \dif z_1\wedge \dif \overline{z}_1 \wedge \dif z_2\wedge \dif \overline{z}_2.
\label{eq:int_two_regimes}
\end{equation}
The integral over $\DD_{x_1}\times \DD_{x_2}$ gives rise to the first term in the second line of \eqref{eq:V_int}, while in the integral over $\DD_{x_1}^c\times \DD_{x_2}^c$ we perform two more integrations by parts. Since in the regime $z_j\in\DD_{x_j}^c$, $j=1,2$, it holds that $\partial_1\overline{\partial_1}\Theta=\partial_2\overline{\partial_2}\Theta=0$, the integral over the second regime in \eqref{eq:int_two_regimes} (together with the factor $-4$) equals to
\begin{equation}
\begin{split}
&-4\int_{\DD_{x_1}^c\times \partial\DD_{x_2}^-} \overline{\partial_1f_1(z_1)} f_2(z_2) \partial_1\overline{\partial_2}\Theta(z_1,z_2) \dif z_1\wedge \dif \overline{z}_1 \wedge \dif \overline{z}_2\\
&= 4\int_{\partial\DD_{x_1}^-\times \partial \DD_{x_2}^-} \overline{f_1(z_1)} f_2(z_2) \partial_1\overline{\partial_2}\Theta(z_1,z_2) \dif z_1\wedge \dif \overline{z}_2= 4x_1\int_{\partial\DD_{x_1}}\int_{\partial \DD_{x_2}} \frac{\overline{f_1(z_1)}f_2(z_2)}{(x_1-z_1\overline{z}_2)^2}\dif z_1\wedge \dif \overline{z}_2.
\end{split}
\end{equation}
This finishes the proof of Lemma~\ref{lem:UV_int}.
\end{proof}

\section{Proof of Proposition~\ref{prop:underline_comp}}
\label{sec:compcov}

Throughout the proof we use the short-hand notations introduced in Proposition~\ref{prop:underline_comp} and in \eqref{eq:defP}--\eqref{eq:M2def}. We also use $\xi>0$ to denote an $N$-independent exponent which can be taken arbitrarily small. The precise value of $\xi$ may change from line to line. We remind the reader that $x_1,x_2$ are not interchangeable and that $x_1\le x_2$.

Since the proof in the complex and real cases is almost identical, we focus on the complex case and in Section~\ref{sec:Cov_real} explain how to adjust this argument in the real case. The proof of Proposition~\ref{prop:underline_comp} for $\beta=2$ consists of two steps. First, in Section~\ref{sec:Cov_underline} we show that
\begin{equation}
\begin{split}
\E \langle G_1-\E G_1\rangle\langle G_2-\E G_2\rangle  = &\frac{-1}{2x_1x_2N^2} \left(\frac{\ii x_1}{2}\sum_{\sigma\in\{\pm\}}\sigma\partial_{\eta_2}\E\left[\langle G_1AE_\sigma \widetilde{G}_2 E_\sigma\rangle\right] + x_1^2\kappa_4 U_1U_2\right)\\
 + &\mathcal{O}\left(\left(\frac{1}{N\eta_1}+\frac{1}{\sqrt{N}}\right)\frac{N^\xi}{N^2\eta_1\eta_2}\right),
\end{split}
\label{eq:Cov_main}
\end{equation}
where $E_\sigma=E_\sigma^{(k_1)}$ and $A$ is given by
\begin{equation}
A:=\left(1-x_1\langle M_1^2\rangle\right)^{-1}M_1\in\C^{(2k_1)\times (2k_1)}.
\label{eq:def_A}
\end{equation}
Next, in Section~\ref{sec:Cov_determ} we apply the two-resolvent local law from Proposition~\ref{prop:2G_randomD}, and compute the deterministic approximation to the sum over $\sigma$ in the rhs. of \eqref{eq:Cov_main} in terms of the entries of $M_1,M_2$.

\subsection{Proof of \eqref{eq:Cov_main}}\label{sec:Cov_underline}
The main tool used in the proof of \eqref{eq:Cov_main} is the \emph{second order (Gaussian) renormalization} (also known as \emph{underline}), originally introduced in \cite[Eq.(4.2)]{ETH_Wigner} (for earlier works, which did not operate with this formalism but followed a similar approach see e.g. \cite[Eq.(2.5)]{Knowles20}). For any differentiable function~$g(W)$ we denote
\begin{equation}
\underline{Wg(W)}:=Wg(W) - \widetilde{\E}\left[ \widetilde{W}\left(\partial_{\widetilde{W}}g\right)(W)\right],
\label{eq:def_under_1}
\end{equation} 
where $\widetilde{W}$ is an independent copy of $W$, the expectation $\widetilde{\E}$ is taken with respect to $\widetilde{W}$ and $\partial_{\widetilde{W}}$ is the derivative in the direction of $\widetilde{W}$. The name second order renormalization arises from the fact that the underline removes the second order terms in the cumulant expansion for $Wg(W)$. In particular, $\E\underline{Wg(W)}=0$ for $W$ with Gaussian entries.

We start the proof of \eqref{eq:Cov_main} with the basic identity
\begin{equation}
\langle G_1-M_1\rangle = \langle \mathcal{S}^{(k_1)}_x[G_1-M_1](G_1-M_1)A\rangle - \langle \underline{W_1G_1A}\rangle,\quad\text{where}\quad A:=\left(\big(\mathcal{B}_{11}^*\big)^{-1}\big[E_+^{(k_1)}\big]\right)^*M_1.
\label{eq:G_underline}
\end{equation}
For the proof see \cite[Eq.(6.9)]{macroCLT_complex} together with the rescaling \eqref{eq:M_rescaling}. The one-body stability operator $\mathcal{B}_{11}$ is defined in \eqref{eq:def_B_11} and its adjoint is taken with respect to the scalar product $\langle R,S\rangle:=\mathrm{Tr}[R^*S]$ on $\C^{(2k_1)\times (2k_1)}$. An explicit calculation based on~\eqref{eq:def_B_11} shows that $A$ defined in \eqref{eq:G_underline} coincides with \eqref{eq:def_A}, cf. the formula below \cite[Eq.(3.39)]{hyperuniformity} stated for $x_1=1$. To compute the lhs. of \eqref{eq:Cov_main}, we use \eqref{eq:G_underline} and apply \eqref{eq:1G_av} both for $G_1$ and $G_2$ in the term containing three resolvents, arriving at
\begin{equation}
\E \langle G_1-\E G_1\rangle \langle G_2-\E G_2\rangle\!=\! \E \langle G_1-M_1\rangle \langle G_2-\E G_2\rangle\!=\! -\E\left[\langle\underline{W_1G_1A}\rangle \left(\langle G_2\rangle -\E \langle G_2\rangle\right)\right] + \mathcal{O} \left(\frac{N^\xi\|A\|}{N^3\eta_1^2\eta_2}\right)\!. 
\label{eq:underline_comp1}
\end{equation}
In Appendix~\ref{sec:beta} we show that
\begin{equation}
\|A\|\lesssim \left\vert x_1-|z_1|^2\right\vert^{-1},
\label{eq:A_bound}
\end{equation}
so the error term in the rhs. of \eqref{eq:underline_comp1} can be incorporated into the error term in the rhs. of \eqref{eq:Cov_main} (recall that the latter one may deteriorate polynomially in $|x_1-|z_1|^2|$).

Next, we expand the underline on the second factor in the rhs. of \eqref{eq:underline_comp1} by the means of \eqref{eq:def_under_1}. To this end, let $\widetilde{W}_1$ be an independent copy of $W_1$, and let $\widetilde{\E}_1$ be the expectation wrt. $\widetilde{W}_1$. We have
\begin{equation}
\partial_{\widetilde{W}_1}\left(\langle G_2\rangle-\E\langle G_2\rangle\right) = -\langle G_2\partial_{\widetilde{W}_1}[W_2] G_2\rangle = \frac{\ii}{2x_2 N} \partial_{\eta_2} \mathrm{Tr}\left[ \partial_{\widetilde{W}_1}[W_2]G_2\right]= \frac{\ii}{2x_2 N} \partial_{\eta_2} \mathrm{Tr}\left[\widetilde{W}_1 \widetilde{G}_2\right],
\label{eq:underline_comp2}
\end{equation}
where in the last identity we used \eqref{eq:defP} and the fact that $W_1$ is a minor of $W_2$. Observe that for any $B_1,B_2\in \C^{(2k_1)\times (2k_1)}$ independent of $\widetilde{W}_1$ it holds that
\begin{equation}
\widetilde{\E}_1 \left[ \mathrm{Tr}[\widetilde{W}_1B_1]\mathrm{Tr}[\widetilde{W}_2B_2]\right] = \frac{1}{2N}\sum_{\sigma\in\{\pm\}}\sigma \mathrm{Tr}\left[ B_1E_\sigma^{(k_1)}B_2E_\sigma^{(k_1)}\right].
\label{eq:quadr_form}
\end{equation}
Combining \eqref{eq:underline_comp1}--\eqref{eq:quadr_form} with \eqref{eq:def_under_1}, we obtain
\begin{equation}
\begin{split}
\label{eq:usrel}
\E \langle G_1-\E G_1\rangle \langle G_2-\E G_2\rangle = -\frac{\ii}{4x_2 N^2} \sum_{\sigma\in\{\pm\}}\sigma\partial_{\eta_2}\left\langle G_1A E_\sigma^{(k_1)} \widetilde{G}_2 E_\sigma^{(k_1)}\right\rangle-\Upsilon,
\end{split}
\end{equation}
where $\Upsilon$ is the fully underlined term:
\begin{equation}
\Upsilon=\Upsilon\big((x_1,\eta_1,z_1),(x_2,\eta_2,z_2)\big):=\underline{\langle W_1G_1A\rangle \left(\langle G_2\rangle -\E \langle G_2\rangle\right)}.
\end{equation}

To finish the proof of \eqref{eq:Cov_main} it remains to compute $\E \Upsilon$. Note that in the Gaussian case $\E\Upsilon=0$. We will perform a cumulant expansion and see that only cumulants of order at least three contribute to $\E\Upsilon$. Introduce the index set 
\begin{equation}
\begin{split}
\mathcal{I}_1=& \left([(1-x_1)N+1,N] \cup [(2-x_1)N+1,2N]\right)^2,\\
\mathcal{J}_1=& [(1-x_1)N+1,N] \times [(2-x_1)N+1,2N] \cup [(2-x_1)N+1,2N]\times [(1-x_1)N+1,N].
\end{split}
\label{eq:def_IJ}
\end{equation}
We label the entries of $W_1$ and $G_1$ by $\mathcal{I}_1$. Then non-zero entries of $W_1$ are enumerated by $\mathcal{J}_1$. For $l\in \N$ and a collection of index pairs
\begin{equation}
\bm{\alpha}=\left(\alpha_1,\ldots,\alpha_l\right)\in \mathcal{I}_1^l
\label{eq:def_index_set}
\end{equation}
denote the normalized cumulant of the corresponding elements of $W_1$ by
\begin{equation}
\kappa(\bm{\alpha})=\kappa(\alpha_1,\ldots,\alpha_k):=\kappa(\sqrt{N}w_{\alpha_1},\ldots,\sqrt{N}w_{\alpha_k}).
\label{eq:def_cum}
\end{equation}
Since $X$ has i.i.d. entries, $\kappa(\bm{\alpha})$ may not vanish only when $\alpha_1,\ldots,\alpha_k\in\{ab,ba\}$ for some $(a,b)\in\mathcal{J}_1$. We further denote $\partial_{\bm{\alpha}}:=\partial_{\alpha_1}\cdots\partial_{\alpha_k}$, where $\partial_{\alpha}$ denotes the directional derivative in the direction $w_{\alpha}$, for an index pair~$\alpha$.

Performing the cumulant expansion \cite[Lemma~3.2]{Knowles20} (see also \cite[Eq.(7.14)]{hyperuniformity}, \cite[Lemma~3.1]{HeKnowles} and \cite[Section~II]{Khorunzhy96}) in $\E \Upsilon$, we obtain
\begin{equation}
\E\Upsilon = \frac{1}{2x_1N}\sum_{(a,b)\in\mathcal{J}_1}\sum_{\ell=3}^L\sum_{\bm{\alpha}\in\{ab,ba\}^{\ell-1}}\frac{\kappa(ab,\bm{\alpha})}{N^{\ell/2}(\ell-1)!}\E\partial_{\bm{\alpha}}\left[ \left(G_1A\right)_{ba}\left(\langle G_2\rangle - \E\langle G_2\rangle\right)\right] + \mathcal{O}(N^{-D}),
\label{eq:Cov_cum_exp}
\end{equation}
for any fixed $D>0$ and for some $L\in\N$ which depends on $D$, but does not depend on $N$. Here we used that due to the definition of the underline \eqref{eq:def_under_1}, the second order cumulants are absent in the expansion. The analysis of \eqref{eq:Cov_cum_exp} closely follows the one in \cite[Eq.(7.14)-(7.24)]{hyperuniformity}, so we only explain required adjustments. First, \cite[Eq.(7.14)-(7.24)]{hyperuniformity} concerns only the regime $\eta_1,\eta_2\lesssim 1$, however these estimates hold also for $\eta_1\lesssim 1$ and $\eta_2\gtrsim 1$. In the regime $\eta_1\gtrsim 1$ we need to use additionally that
\begin{equation}
\|M_1\|\lesssim \frac{1}{\eta_1}\quad\text{and}\quad \left\vert(G_1-M_1)_{ab}\right\vert\prec\frac{1}{\sqrt{N}\eta_1^2},
\label{eq:large_eta_bounds}
\end{equation}
uniformly in $(a,b)\in\mathcal{I}_x$. The first bound in \eqref{eq:large_eta_bounds} directly follows from the MDE \eqref{eq:MDE} for $M_1$, while the second one is the isotropic local law from \cite[Eq.(5.2)]{loc_circ_law18}. Using these inputs, we get that the terms with $\ell\ge 5$ and $\ell=3$ contribute at most 
\begin{equation}
\mathcal{E}:=\frac{1}{\sqrt{N}}\frac{N^\xi}{N^2\eta_1\eta_2}
\label{eq:cum_exp_bound}
\end{equation}
to the rhs. of \eqref{eq:Cov_cum_exp}. We further have that the $\ell=4$ terms contributing to the leading order in the rhs. of \eqref{eq:Cov_cum_exp} correspond to $\bm{\alpha}$ which is a permutation of $(ab,ba,ba)$, and exactly one derivative $\partial_{ba}$ hits the term $(G_1A)_{ba}$, while the remaining two hit $\langle G_2\rangle-\E\langle G_2\rangle$. The rest of the $\ell=4$ terms can be incorporated into the error term~\eqref{eq:cum_exp_bound}. Thus, it holds that
\begin{equation}
\E\Upsilon =\frac{1}{2x_1N}\sum_{(a,b)\in\mathcal{J}_1} \frac{\kappa_4}{N^2} \E \partial_{ba} [(G_1A)_{ba}]\partial_{ab}\partial_{ba} \left[ \langle G_2\rangle -\E\langle G_2\rangle\right] +\mathcal{O}\left(\mathcal{E}\right).
\label{eq:cum_exp_kappa4}
\end{equation}
Here we took into account that there are 3 permutations of $(ab,ba,ba)$ and 2 ways to distribute derivatives over the two factors (due to the fact that there are two derivatives in $w_{ba}$). This canceled the factor $(\ell-1)!$ for $\ell=4$ in denominator in the rhs. of \eqref{eq:Cov_cum_exp}.

Next, we perform differentiation in \eqref{eq:cum_exp_kappa4} and replace each of the four obtained factors by its deterministic approximation using the isotropic single-resolvent local law \eqref{eq:1G_iso}. This yields an error term which can be again incorporated into the second term in the rhs. of \eqref{eq:cum_exp_kappa4}, see \cite[Eq.(7.18)-(7.19)]{hyperuniformity} with the adjustments explained above. We get
\begin{equation}
\E\Upsilon =\frac{-1}{4x_1x_2N^2}\sum_{(a,b)\in\mathcal{J}_1} \frac{\kappa_4}{N^2} (M_1)_{bb}(M_1A)_{aa}\!\!\left(\!(M_2)_{aa}\big(M_{22}^{E_+}\big)_{bb}\! +\! \big(M_{22}^{E_+}\big)_{aa}(M_2)_{bb}\right) +\mathcal{O}\left(\mathcal{E}\right)\!,
\label{eq:cum_exp_kappa4_determ}
\end{equation}
where $E_+=E_+^{(k_2)}$ and $M_{22}^{E_+}$ is defined in \eqref{eq:def_M_11}. An explicit calculation based on \eqref{eq:MDE} and \eqref{eq:def_B_11} shows that
\begin{equation}
M_{22}^{E_+} = \mathcal{B}_{22}^{-1}[M_2^2]= -\ii\partial_{\eta_2} M_2\quad\text{and}\quad M_1A=\frac{M_1^2}{1-x_1\langle M_1^2\rangle}=\mathcal{B}_{11}^{-1}[M_1^2]=-\ii\partial_{\eta_1} M_1,
\label{eq:MA}
\end{equation}
compare with \cite[Eq.(7.21)]{hyperuniformity} stated for $x_1=x_2=1$. Since all diagonal entries of $M_j$ equal to $m_j$, we conclude that 
\begin{equation}
\begin{split}
\E\Upsilon =&\frac{2}{4x_1x_2N^2}\sum_{(a,b)\in\mathcal{J}_1} \frac{\kappa_4}{N^2} m_1 \partial_{\eta_1}[m_1] m_2 \partial_{\eta_2}[m_2]  +\mathcal{O}\left(\mathcal{E}\right)\\
=&\frac{2|\mathcal{J}_1|\kappa_4}{4x_1x_2N^4} \frac{1}{2}\partial_{\eta_1}[m_1^2]\frac{1}{2}\partial_{\eta_2}[m_2^2]+\mathcal{O}(\mathcal{E}) = -\frac{x^2_1\kappa_4}{2x_1x_2N^2}U_1U_2 +\mathcal{O}(\mathcal{E}),
\end{split}
\label{eq:cum_exp_kappa4_determ}
\end{equation}
where we computed that $|\mathcal{J}_1|=2x_1^2N^2$ and recalled the definition of $U_j$ from \eqref{eq:def_UV}. This finishes the proof of \eqref{eq:Cov_main}.

\subsection{Computation of the deterministic approximation}\label{sec:Cov_determ} In this section we consistently denote $E_\sigma:=E^{(k_1)}_\sigma$, for $\sigma\in\{\pm\}$. We claim that 
\begin{equation}
\left\vert\partial_{\eta_2} \left\langle \left(G_1AE_\sigma \widetilde{G}_2 -M_{1\widetilde{2}}^{E_\sigma}\right)E_\sigma\right\rangle\right\vert \prec\frac{1}{N\eta_1\eta_2^2},
\label{eq:dif_ll}
\end{equation}
where the implicit constant in the rhs. of \eqref{eq:dif_ll} may deteriorate polynomially in $||z_j|-\sqrt{x_j}|$, $j=1,2$. This immediately follows from the local law from Proposition~\ref{prop:2G_randomD} combined with the observation that the lhs. of \eqref{eq:dif_ll} (without the absolute value) is an analytic function of $w_2$ evaluated at $w_2=\ii\eta_2$, so its derivative can be represented as a contour integral via Cauchy's integral formula. For a similar argument see e.g. \cite[Eq.(7.11)]{hyperuniformity}. In particular, because of this step we formulated Proposition~\ref{prop:2G_randomD} not only on the imaginary axis, but for general spectral parameters.  

From \eqref{eq:dif_ll} we have that
\begin{equation}
\sum_{\sigma\in\{\pm\}}\sigma\partial_{\eta_2}\E\left[\langle G_1AE_\sigma \widetilde{G}_2 E_\sigma\rangle\right] = \partial_{\eta_2}\sum_{\sigma\in\{\pm\}} \sigma\left\langle M_{1\widetilde{2}}^{AE_\sigma}E_\sigma\right\rangle + \mathcal{O}_\theta\left(\frac{N^\xi}{N\eta_1\eta_2^2}\right).
\label{eq:Cov_to_determ}
\end{equation}
Here $\mathcal{O}_\theta(\cdot)$ is understood in the same sense as in Proposition~\ref{prop:underline_comp}. Computation of the sum in the rhs. of \eqref{eq:Cov_to_determ} closely follows the argument in \cite[Eq.(7.25)-(7.27)]{hyperuniformity}. First, we define $\mathcal{B}_{\widetilde{2}1}$ by interchanging $M_1$ and $\widetilde{M}_2$ in \eqref{eq:def_B_12}. Similarly, $M_{\widetilde{2}1}^B:=\mathcal{B}_{\widetilde{2}1}^{-1}[\widetilde{M}_2BM_1]$ for an $(2x_1N)\times (2x_1N)$ observable $B$. An elementary calculation based on \eqref{eq:def_B_12}, \eqref{eq:M2def}, and on their analogues which we have just defined, shows that
\begin{equation}
\left\langle M_{1\widetilde{2}}^{AE_\sigma}E_\sigma\right\rangle = \left\langle M_{\widetilde{2}1}^{E_\sigma}AE_\sigma\right\rangle = \left\langle \mathcal{B}_{\widetilde{2}1}^{-1}[\widetilde{M}_2E_\sigma M_1] AE_\sigma\right\rangle,\quad \sigma\in\{\pm\}.
\label{eq:12_to_21}
\end{equation}
The exact choice of observables is not important, this identity holds with $AE_\sigma$ and $E_\sigma$ replaced by general $B_1,B_2\in\C^{(2k_1)\times (2k_1)}$. To compute the rhs. of \eqref{eq:12_to_21}, we denote for short $V_\sigma:=\widetilde{M}_2E_\sigma M_1$ for $\sigma\in\{\pm\}$. By the definition of $\mathcal{B}_{\widetilde{2}1}$ it holds that
\begin{equation}
\begin{pmatrix}
\mathcal{B}_{\widetilde{2}1}[V_+]\\\mathcal{B}_{\widetilde{2}1}[V_-]
\end{pmatrix}
=\mathcal{P}\begin{pmatrix}
V_+\\V_-
\end{pmatrix},\quad\text{where}\quad
\mathcal{P}=
\begin{pmatrix}
1-x_1\langle V_+\rangle&x_1\langle V_+E_-\rangle\\ -x_1\langle V_-\rangle &1+x_1\langle V_-E_-\rangle
\end{pmatrix}.
\label{eq:B_V}
\end{equation}
Thus we have
\begin{equation}
\begin{pmatrix}
\mathcal{B}^{-1}_{\widetilde{2}1}[V_+]\\\mathcal{B}^{-1}_{\widetilde{2}1}[V_-]
\end{pmatrix}
=\mathcal{P}^{-1}\begin{pmatrix}
V_+\\V_-
\end{pmatrix}=\frac{1}{\det \mathcal{P}}
\begin{pmatrix}
\left(1+x_1\langle V_-E_-\rangle\right)V_+ - x_1\langle V_+E_-\rangle V_-\\
x_1\langle V_-\rangle V_+ +\left(1-x_1\langle V_+\rangle\right)V_-
\end{pmatrix}.
\label{eq:P_inverse}
\end{equation}
Combining \eqref{eq:12_to_21} with \eqref{eq:P_inverse} and the second part of \eqref{eq:MA}, we conclude that
\begin{equation}
\sum_{\sigma\in\{\pm\}} \sigma\left\langle M_{1\widetilde{2}}^{AE_\sigma}E_\sigma\right\rangle = \frac{\ii}{x_1}\partial_{\eta_1} \log \det \mathcal{P}.
\end{equation}
An explicit calculation based on \eqref{eq:def_u} and \eqref{eq:M_tilde} shows that $\det\mathcal{P}$ equals to the argument of logarithm in the last line of \eqref{eq:def_UV}. Together with \eqref{eq:Cov_main} and \eqref{eq:Cov_to_determ} this finishes the proof of Proposition~\ref{prop:underline_comp} in the complex case.

\subsection{Real case}\label{sec:Cov_real} There are only three minor differences in the proof of Proposition~\ref{prop:underline_comp} in the real case compared to the complex case. First, in the rhs. of \eqref{eq:G_underline} an additional term $N^{-1}\langle G_1^\mt A\rangle$ arises. This term does not contribute to the leading order in \eqref{eq:underline_comp1} and can be incorporated into the error term similarly to the first term in the rhs. of \eqref{eq:G_underline}. Second, the following new term arises in the rhs. of \eqref{eq:quadr_form}:
\begin{equation}
\frac{1}{2N}\sum_{\sigma\in\{\pm\}}\sigma  \mathrm{Tr}\left[ B_1E_\sigma^{(k_1)}B_2^\mt E_\sigma^{(k_1)}\right].
\label{eq:quadr_form_real}
\end{equation}
As in the complex case, we take $B_1:=G_1A$ and $B_2:=\widetilde{G}_2$. Since $W^{(k_1)}$ is a real symmetric matrix and $P$ defined in \eqref{eq:defP} has real entries, we obtain
\begin{equation}
\widetilde{G}_2^\mt =P^*G_2^\mt P = P^*G^{(k_2),\overline{z}_2}(\ii\eta_2)P,
\end{equation}
i.e. transpose on $\widetilde{G}_2$ results in replacing $z_2$ by $\overline{z}_2$. Thus, \eqref{eq:quadr_form_real} contributes $V_{1\overline{2}}$ to the rhs. of \eqref{eq:Cov_UV}. Finally, in the analysis of the fully underlined term \eqref{eq:Cov_cum_exp}--\eqref{eq:cum_exp_kappa4_determ} the formula for $\kappa_4$ changes according to \eqref{eq:def_kappa_4}, while the entire analysis remains intact. This finishes the proof of Proposition~\ref{prop:underline_comp} in the real case.

\section{Proof of the local law in Proposition~\ref{prop:2G_randomD}}\label{sec:local_law_proof}

Recall the definition of the two-body stability operator $\mathcal{B}_{1\widetilde{2}}$ from \eqref{eq:def_B_12}. One of the main ingredients in the proof of Proposition~\ref{prop:2G_randomD} is the upper bound on $\|\mathcal{B}_{1\widetilde{2}}^{-1}\|$, since this quantity controls the size of $M_{1\widetilde{2}}^B$ via \eqref{eq:M2def}. Thus, we start with the discussion of spectral properties of $\mathcal{B}_{1\widetilde{2}}$. 

Throughout this section we use the following short-hand notations. Index $j$ is always meant to be from $\{1,2\}$. Denote $M_j:=M^{(k_j),z_j}_{x_j}(w_j)$ for $x_j:=k_j/N\in (0,1]$ with $x_1\le x_2$, $z_j\in \C$ and $w_j\in\C\setminus\R$. The same convention is used for $m_j, u_j$ and $\rho_j$ defined in \eqref{eq:def_u} and \eqref{eq:def_rho}. Denote additionally $\eta_j:=|\Im w_j|$. Whenever we refer to some properties of \eqref{eq:MDE} known from the literature, we mean that they are known for $x=1$ and for general $x\in (0,1]$ one needs to apply the rescaling \eqref{eq:M_rescaling}.  

First, we have that $\mathcal{B}_{1\widetilde{2}}$ has eigenvalue 1 of multiplicity $(2k_1)^2-2$, and the remaining two eigenvalues are given by
\begin{equation}
\beta_\pm := 1-x_1\Re [z_1\overline{z}_2] u_1u_2 \pm x_1\sqrt{m_1^2m_2^2 -(\Im [z_1\overline{z}_2]u_1u_2)^2}.
\label{eq:beta_expl}
\end{equation}
The proof of \eqref{eq:beta_expl} immediately follows from the fact that the eigenvalues of $\mathcal{B}_{1\widetilde{2}}$ different from 1 are the eigenvalues of the $2\times 2$ matrix $\mathcal{P}$ defined in \eqref{eq:B_V}. For a similar argument see \cite[Eq.(6.3)-(6.4)]{macroCLT_complex}. It further holds that
\begin{equation}
\|\mathcal{B}_{1\widetilde{2}}^{-1}[R]\|\lesssim \frac{\|R\|}{\beta_*},\quad\forall\, R\in\C^{(2k_1)\times(2k_1)},\quad \text{where}\quad \beta_*:=\min\{|\beta_+|,|\beta_-|\}.
\label{eq:B_inv_bound}
\end{equation}
The implicit constant in \eqref{eq:B_inv_bound} depends only on $c_0>0$ in the condition $x_1,x_2\ge c_0$. Since $\mathcal{B}_{1\widetilde{2}}$ is not Hermitian, \eqref{eq:B_inv_bound} is not a direct consequence of \eqref{eq:beta_expl}. However, \eqref{eq:B_inv_bound} follows by a explicit inversion of $\mathcal{B}_{1\widetilde{2}}$, see e.g. \cite[Lemma~S2.8(1)]{hyperuniformity}.

To get an upper bound on the lhs. of \eqref{eq:B_inv_bound} it suffices to get a lower bound on $\beta_*$. Since $|m_j|, |u_j|\lesssim 1$ by \cite[Eq.(3.5)]{macroCLT_complex}, we have $\beta_*\gtrsim |\beta_+\beta_-|$. We further focus on the analysis of the latter quantity, since it is more accessible than $\beta_*$. Let us immediately remove from consideration the regime when at least one of $\eta_1,\eta_2$ is large. Indeed, by the definition of $u_j$ \eqref{eq:def_u}, $|u_j|\lesssim \eta_j^{-1}$. Thus, there exists a (large) constant $C>0$ such that if $\eta_j>C|z_j|$ for at least one $j$, we have $\beta_*\sim 1$ by \eqref{eq:beta_expl}. 

Consider first the case when $w_1,w_2$ are purely imaginary.

\begin{lemma}[$\beta_+\beta_-$ on the imaginary axis]\label{lem:beta} Fix a (small) $c_0>0$ and a (large) $L>0$. For $w_j=\pm\ii\eta_j$ it holds that
\begin{equation}
|\beta_+\beta_-|\gtrsim \rho_2^2|x_2-x_1|+|z_1-z_2|^2 +\rho_1\eta_1 +\rho_2\eta_2,
\label{eq:min_beta}
\end{equation}
uniformly in $x_j\in [c_0,1]$, $x_1\le x_2$, $|z_j|\le L$ and $0<\eta_j\le L$. Additionally, in the regime $\sqrt{x_j}\le |z_1|\le L$, $j=1,2$, we have
\begin{equation}
|\beta_+\beta_-|\gtrsim \left(|z_1|^2-x_1\right)\left(|z_2|^2-x_2\right).
\label{eq:min_beta_outside}
\end{equation}
\end{lemma}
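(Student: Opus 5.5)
We work with the explicit product rather than with $\beta_\pm$ separately. From \eqref{eq:beta_expl},
\begin{equation*}
\beta_+\beta_-=\det\mathcal{P}=1+x_1^2u_1^2u_2^2|z_1z_2|^2-x_1^2m_1^2m_2^2-2x_1u_1u_2\Re[z_1\overline{z}_2],
\end{equation*}
where we used $(\Re[z_1\overline z_2])^2+(\Im[z_1\overline z_2])^2=|z_1z_2|^2$. On the imaginary axis $w_j=\ii\eta_j$ (the case $-\ii\eta_j$ follows by conjugation symmetry of $m,u$) we have $m_j=\ii\Im m_j$ purely imaginary and $u_j>0$ real. Hence $-m_1^2m_2^2=-(\Im m_1)^2(\Im m_2)^2<0$ and every term of $\det\mathcal P$ is real.

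The plan is to rewrite $\det\mathcal P$ as a sum of nonnegative terms. Completing the square gives
\begin{equation*}
\det\mathcal P=(1-x_1u_1u_2|z_1||z_2|)^2+2x_1u_1u_2\big(|z_1||z_2|-\Re[z_1\overline z_2]\big)-x_1^2(\Im m_1\Im m_2)^2 .
\end{equation*}
The middle term is $\gtrsim|z_1-z_2|^2$ up to a correction of order $||z_1|-|z_2||^2$, which will be absorbed into the first term. To handle the first and last terms we use the cubic equation \eqref{eq:m_cubic} in the form $\Im m_j=\Im m_j\big(\eta_j+x_j\Im m_j\big)u_j^{-1}\cdots$. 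More precisely, \eqref{eq:def_u} and \eqref{eq:m_cubic} give $u_j|z_j|^2=1-u_j(\eta_j+x_j\Im m_j)^{\ldots}$, and a short computation yields the identities
\begin{equation*}
1-x_ju_j^2|z_j|^2=x_j(\Im m_j)^2+\eta_ju_j/\Im m_j\cdot(\ldots),\qquad u_j=\frac{\Im m_j}{\eta_j+x_j\Im m_j}.
\end{equation*}
This step has to be done carefully. Next, write $a_j:=\sqrt{x_1}u_j|z_j|$ and $b_j:=\sqrt{x_1}\,\Im m_j$, so that $\det\mathcal P=(1-a_1a_2)^2-b_1^2b_2^2+(\text{angular term})$. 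Factoring,
\begin{equation*}
(1-a_1a_2)^2-b_1^2b_2^2=(1-a_1a_2-b_1b_2)(1-a_1a_2+b_1b_2).
\end{equation*}
The second factor is $\gtrsim1$. For the first factor we want the bound $1-a_1a_2-b_1b_2\ge \tfrac12\big[(1-a_1^2-b_1^2)+(1-a_2^2-b_2^2)+(a_1-a_2)^2+(b_1-b_2)^2\big]$, which follows from Cauchy--Schwarz.

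The two kinds of terms then produce the bound as follows:
\begin{itemize}
\item[(i)] For $j=1$, $1-a_1^2-b_1^2=1-x_1(u_1^2|z_1|^2+(\Im m_1)^2)\sim\eta_1\rho_1$ (up to the bounded factor $u_1^{-1}$); this comes from the imaginary part of the equation, exactly as in the $x_1=x_2$ case of \cite{macroCLT_complex}.
\item[(ii)] For $j=2$ the parameter $x_1$ replaces $x_2$. This gives $1-a_2^2-b_2^2=(1-x_2(\cdot))+(x_2-x_1)(u_2^2|z_2|^2+(\Im m_2)^2)\gtrsim\eta_2\rho_2+(x_2-x_1)\rho_2^2$, which is the source of the term $\rho_2^2|x_2-x_1|$.
\item[(iii)] The squares $(a_1-a_2)^2$, together with the angular term, control $|z_1-z_2|^2$: if $|z_1|,|z_2|$ are close the angular term dominates, and otherwise $a_1-a_2$ does, using monotonicity of $u$ in $|z|$. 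In the bulk-versus-outside mixed regimes this comparison requires a case split.
\end{itemize}

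For \eqref{eq:min_beta_outside}, when both $z_j$ lie outside (so $\rho_j\to0$ as $\eta\to0$), we instead use $1-a_j^2\ge1-x_1u_j^2|z_j|^2$ and the outside limit $u_j\approx|z_j|^{-2}$ to get $1-a_j^2\gtrsim(|z_j|^2-x_j)/|z_j|^2$. We also need $b_1b_2$ to be small relative to these quantities, and then multiply the two factors.

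The main obstacle is step (iii), i.e. extracting $|z_1-z_2|^2$ uniformly near the edges, where $u_j$ varies sharply. I would first try to prove monotonicity/Lipschitz lower bounds for $|z|\mapsto u|z|$ directly from \eqref{eq:m_cubic}, and fall back to a compactness argument away from the degenerate points.
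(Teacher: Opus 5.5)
Your formula for $\det\mathcal P$ and the completed square are correct, but the argument built on them has two genuine gaps.

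\textbf{First gap: the second factor is not $\gtrsim 1$.} On the range of the lemma this claim is false. Take $x_1=x_2$, $z_1=z_2$ with $|z_1|=\sqrt{x_1}$, and $\eta_1=\eta_2=\eta\to0$. Then $u_j\to1/x_1$, so $a_j\to1$, $b_j\to0$, and $1-a_1a_2+b_1b_2\to0$. At this point $\beta_+\beta_-\sim\eta^{4/3}$, while your first factor is $1-x_1u_1\sim\eta^{2/3}$. So the bound $\det\mathcal P\gtrsim 1-a_1a_2-b_1b_2$ you are aiming for fails.

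The identities you left as ``$\cdots$'' are, by \eqref{eq:m_cubic} and \eqref{eq:def_u},
\[
1=\eta_j|\Im m_j|+x_j|\Im m_j|^2+u_j|z_j|^2,\qquad u_j^2|z_j|^2+|\Im m_j|^2=u_j .
\]
Hence $1-a_1^2-b_1^2=1-x_1u_1=\eta_1/(\eta_1+x_1|\Im m_1|)$. This is $\gtrsim\eta_1\rho_1$ but not $\sim\eta_1\rho_1$.

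\textbf{Second gap: step (iii) is left open, and it is the crux.} The quantity $a_j=\sqrt{x_1}u_j|z_j|$ depends on $\eta_j$ and $x_j$ as well as $|z_j|$. So $(a_1-a_2)^2$ can vanish while $|z_1|\neq|z_2|$ (take $\eta_1\neq\eta_2$). Monotonicity of $|z|\mapsto u|z|$ at fixed $\eta$ therefore does not control the correction $-x_1u_1u_2(|z_1|-|z_2|)^2$ hidden in your angular term. A compactness fallback cannot supply the required uniformity either. Near the degenerate set $\eta_j\to0$, $z_1=z_2$, $x_1=x_2$ both sides vanish, and you need the exact powers of $\rho_j,\eta_j$.

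\textbf{The missing idea.} No absorption or case analysis is needed. Substitute $u_j|z_j|^2=1-\eta_j|\Im m_j|-x_j|\Im m_j|^2$ into $\det\mathcal P$ to get the exact identity
\[
\beta_+\beta_-=x_1u_1u_2|z_1-z_2|^2+(1-x_1u_1)(1-x_1u_2)+x_1u_2|\Im m_1|^2\,\frac{1-x_1u_1}{u_1}+x_1u_1|\Im m_2|^2\,\frac{1-x_1u_2}{u_2}.
\]
In your notation, $(1-a_1a_2)^2-b_1^2b_2^2-x_1u_1u_2(|z_1|-|z_2|)^2$ equals exactly the last three terms. Each of them is nonnegative, because $x_1u_1\le1$ and $x_1u_2\le x_2u_2\le1$.

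Using $u_j\sim1$, the terms give the bound as follows:
\begin{itemize}
\item the first term gives $|z_1-z_2|^2$;
\item the third term equals $x_1u_2\eta_1|\Im m_1|\sim\rho_1\eta_1$;
\item writing $1-x_1u_2=(1-x_2u_2)+(x_2-x_1)u_2$, the fourth term gives $\rho_2\eta_2+\rho_2^2(x_2-x_1)$.
\end{itemize}

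For \eqref{eq:min_beta_outside}, keep only the second term and bound it below by $(1-x_1u_1)(1-x_2u_2)$. Then use $1-x_ju_j=\eta_j/(\eta_j+x_j|\Im m_j|)$ together with the outside asymptotics \eqref{eq:rho_asymp}. This is uniform in $\eta_j$, unlike your appeal to the $\eta_j\to0$ limit $u_j\approx|z_j|^{-2}$. This identity-based argument is the paper's proof.
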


The proof of Lemma~\ref{lem:beta} is presented in Appendix~\ref{sec:beta}. In the case $x_1=x_2=1$, \eqref{eq:min_beta} was proven in \cite[Lemma~6.1]{macroCLT_complex} in a slightly weaker form in terms of $\rho_j$. Compared to this result, \eqref{eq:min_beta} features a new effect, which is the dependence on $|x_2-x_1|$. In particular, for $|x_2-x_1|\sim 1$ we have $\|\mathcal{B}_{1\widetilde{2}}^{-1}[R]\|\lesssim \|R\|$ even for small $\eta_1,\eta_2$.

Recall that Proposition~\ref{prop:2G_randomD} is stated for $w_1,w_2$ not only on the imaginary axis, but in a narrow cone around it. Thus, we further consider the perturbative regime when $w_1,w_2$ are close to the imaginary axis.

\begin{lemma}[$\beta_+\beta_-$ in the perturbative regime]\label{lem:beta_perturb} Fix a (small) $c_0>0$ and a (large) $L>0$. For $w_j=E_j\pm\ii\eta_j$ it holds that 
\begin{equation}
\beta_+\beta_-(w_1,w_2) = \beta_+\beta_-(\ii\Im w_1,\ii\Im w_2) +\mathcal{O}\left(\frac{|E_1|}{\eta_1}+\frac{|E_2|}{\eta_2}\right),
\label{eq:beta_perturb}
\end{equation}
uniformly in $x_1,x_2\in [c_0,1]$, $x_1\le x_2$, $|z_j|\le L$ and $w_j=E_j\pm\ii\eta_j$ with $-L\le E_j\le L$, $0<\eta_j\le L$, $j=1,2$. Here by $\beta_+\beta_-(w_1,w_2)$ we abbreviated the dependence of $\beta_+\beta_-$ on $w_1,w_2$, while the parameters $z_1,z_2$ are the same for both $\beta_+\beta_-$ terms in \eqref{eq:beta_perturb}.
\end{lemma}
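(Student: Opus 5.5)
The plan is to write $\beta_+\beta_-$ as an explicit polynomial in $(m_1,m_2,u_1,u_2)$ and reduce \eqref{eq:beta_perturb} to Lipschitz bounds on $m_j$ and $u_j$ in the real part of the spectral parameter. From \eqref{eq:beta_expl},
\begin{equation*}
\beta_+\beta_- = \bigl(1-x_1\Re[z_1\overline{z}_2]u_1u_2\bigr)^2 - x_1^2 m_1^2m_2^2 + x_1^2\bigl(\Im[z_1\overline{z}_2]\bigr)^2u_1^2u_2^2 .
\end{equation*}
This coincides with $\det\mathcal{P}$, the argument of the logarithm in \eqref{eq:def_UV}. It is a polynomial of degree at most four in $m_1,m_2,u_1,u_2$, and its coefficients are bounded uniformly in $|z_j|\le L$ and $x_j\in[c_0,1]$. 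By \eqref{eq:Mmu_bound} we have $|m_j|+|u_j|\lesssim 1$ for $x_j\ge c_0$. Hence the polynomial is Lipschitz in $(m_1,m_2,u_1,u_2)$ on the relevant bounded set. It therefore suffices to show, for each $j$,
\begin{equation*}
\bigl|m_j(E_j\pm\ii\eta_j)-m_j(\pm\ii\eta_j)\bigr| + \bigl|u_j(E_j\pm\ii\eta_j)-u_j(\pm\ii\eta_j)\bigr| \lesssim \frac{|E_j|}{\eta_j}.
\end{equation*}
Both $m_j$ and $u_j$ depend only on $w_j$ and $z_j$ for fixed $x_j$. So the two variables decouple, and the estimate becomes a one-variable statement for each $j$.

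For the one-variable bound, we may assume $|E_j|\le\eta_j$. Otherwise the claim is trivial, since the left-hand side is $\lesssim 1$ by \eqref{eq:Mmu_bound}. Then the segment $\{t E_j\pm\ii\eta_j: t\in[0,1]\}$ stays at distance at least $\eta_j$ from the real axis, and we integrate the derivative along it. Both $m_j$ and $M_j$ are analytic off the real axis. For the derivative we use the Stieltjes representation: $M_j(w)=\int \frac{V(\dif\lambda)}{\lambda-w}$, where $V$ is a matrix-valued positive measure of bounded total mass. This follows from $M$ being the deterministic approximation of the resolvent of a Hermitian matrix, or directly from the MDE theory in \cite{loc_circ_law18}. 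It gives
\begin{equation*}
|\partial_w m_j| \le \frac{|\Im m_j|}{\Im w}\lesssim \frac{1}{\eta_j},
\end{equation*}
using the elementary inequality $|\partial_w s(w)|\le \Im s(w)/\Im w$ for Stieltjes transforms of positive measures. The same argument applies to $u_j$, because $-z_ju_j$ is an off-diagonal block entry of $M_j$ and the corresponding matrix element of $V$ is a complex measure of bounded total variation (polarization). Integrating along the segment yields the desired $|E_j|/\eta_j$ bound.

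The main point to verify is that the derivative bounds are uniform in $|z_j|\le L$ without any $\theta$-dependence, including near the edge $|z_j|\approx\sqrt{x_j}$ where $\rho_j$ can be small. The Stieltjes argument above gives the bound $\Im(\cdot)/\eta_j\lesssim 1/\eta_j$ uniformly, so no edge analysis should be needed. A possible gap is whether the rescaled $u_j$ really is a bounded linear combination of entries of $M_j$ when $z_j$ is small, since we divide by $z_j$. If this causes trouble, I would instead differentiate the cubic \eqref{eq:m_cubic} implicitly and use $u_j=m_j/(w_j+x_jm_j)$. Together with $\partial_w u=\ldots$, this controls the product $|z_j|^2u_j$-type combinations that actually appear in the polynomial: $u_1u_2$ always appears multiplied by $z_1\overline{z}_2$, so only $z_ju_j$ is needed, and $z_ju_j$ is an entry of $M_j$.
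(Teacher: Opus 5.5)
Your proposal is correct. It shares the paper's reduction but proves the key estimate by a different argument.

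The paper also reduces \eqref{eq:beta_perturb} to the Lipschitz bound $\|\widehat M_j-M_j\|\lesssim |E_j|/\eta_j$, since $\beta_+\beta_-$ is a polynomial in the entries of $M_1,\widetilde M_2$. It then subtracts the two Dyson equations to get $\widehat M_j-M_j=E_j\bigl(I-\widehat M_j\mathcal{S}_{x_j}[\cdot]M_j\bigr)^{-1}[\widehat M_jM_j]$. From there it quotes a stability bound for this two-point, non-Hermitian operator from the literature.

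You instead use the matrix Herglotz representation of $M_j$ and the inequality $|\partial_w s|\le \Im s/\Im w$, integrated along the horizontal segment. This route is more elementary and self-contained. It needs only the Stieltjes representation and the a priori bound $\|M\|\lesssim 1$ off the imaginary axis, which the paper also uses for $\widehat M_j$. It is also manifestly uniform in $|z_j|\le L$, including near the edge $|z_j|\approx\sqrt{x_j}$ where $\rho_j$ is small, with no stability analysis. The paper's route stays inside the stability-operator framework used throughout, at the price of relying on an external estimate whose uniformity it does not discuss.

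Two small corrections.

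\textbf{Off-diagonal entries.} Bounded total variation of $V_{ab}$ alone only gives $|\partial_w M_{ab}|\lesssim \eta^{-2}$. What you actually use is that polarization writes $M_{ab}$ as a combination of four Stieltjes transforms of positive measures, so the inequality applies term by term with imaginary parts bounded by $2\|M\|$. Equivalently, by Cauchy--Schwarz, $|\partial_w M_{ab}|\le (\Im M_{aa}\,\Im M_{bb})^{1/2}/\Im w$. State it this way.

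\textbf{The flagged gap for small $z_j$.} There is no gap. Expanding your formula gives
\[
\beta_+\beta_-=1-x_1\bigl[(z_1u_1)(\overline{z}_2u_2)+(\overline{z}_1u_1)(z_2u_2)\bigr]+x_1^2(z_1u_1)(\overline{z}_1u_1)(z_2u_2)(\overline{z}_2u_2)-x_1^2m_1^2m_2^2 .
\]
This is a polynomial in the entries $m_j$, $-z_ju_j$, $-\overline{z}_ju_j$ of $M_j$, which is exactly the paper's reduction. Alternatively, $u_j=|z_j|^2u_j^2-m_j^2=-\det M_j$ as a $2\times 2$ matrix, by \eqref{eq:m_cubic}. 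So no implicit differentiation of the cubic is needed.
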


The proof of Lemma~\ref{lem:beta_perturb} is presented in Appendix~\ref{sec:beta}. Now we are ready to proceed to the proof of the local law~\eqref{eq:2G_av}.

\begin{proof}[Proof of Proposition~\ref{prop:2G_randomD}] To prove the local law we rely on the zigzag strategy (see the references below Proposition~\ref{prop:2G_randomD}). Our proof is analogous to the proof of averaged two-resolvent averaged local laws \cite[Eq.(3.26)]{nonHermdecay} and \cite[Eq.(3.16)]{univ_extr}, with only minor modifications that we now explain. In fact, the current proof is simpler compared to the proof of \cite[Eq.(3.26)]{nonHermdecay} and \cite[Eq.(3.16)]{univ_extr} for two reasons. First, we now focus on the bulk of the spectrum. Though Proposition~\ref{prop:2G_randomD} also covers the complementary regime, it is not done optimally, as it is not needed for the proof of our main results. Second, we do not keep track of the additional smallness coming from the parameter $\gamma_s$, using the notation therein, which gives a smaller error when $z_1, z_2$ are away from each other. The parameter $\gamma_s$, which can be thought as an analog of the rhs. of \eqref{eq:min_beta} for $x=y=1$, determines an improved local law bound in terms of the decay $|z_1-z_2|^{-2}$ (cf. \cite[Eq.(3.26)]{nonHermdecay} with \eqref{eq:2G_av}), i.e. further away $z_1, z_2$ give an improvement in the local law bound compared to the case $z_1=z_2$. 

From now on we assume that $\theta\sim 1$ in Proposition~\ref{prop:2G_randomD}, i.e. that $z_i$ is either in the bulk regime or well outside of the non-Hermitian spectrum, for $i=1,2$. In the case when $\theta$ vanishes with $N$, one can see that the estimates below deteriorate polynomially in $\theta$, which is affordable due to the $\theta$-dependence of the rhs. of \eqref{eq:2G_av}. Recall the notations from Section~\ref{sec:strategy_Cov}. When both parameters $z_1,z_2$ satisfy $|z_i|\ge \sqrt{x_i}+\theta$, then $|\beta_+\beta_-|\sim 1$ by \eqref{eq:min_beta_outside} for $w_1,w_2$ on the imaginary axis. Meanwhile, Lemma~\ref{lem:beta_perturb} extends this estimate to a narrow cone $|\Re w_j|\le c\eta_j$, $j=1,2$, for some small $c>0$. Therefore, $\|\mathcal{B}_{1\widetilde{2}}^{-1}[R]\|\lesssim \|R\|$ for any $R\in\C^{(2k_1)\times (2k_1)}$, and the result in \eqref{eq:2G_av} immediately follows by a global law as in  \cite[Appendix~B]{Multi_res_llaws}. In the case when $|z_i|\le \sqrt{x_i}-\theta$ and $|z_j|\ge \sqrt{x_j}+\theta$ for $\{i,j\}=\{1,2\}$, the argument is similar and relies on \eqref{eq:min_beta} instead of \eqref{eq:min_beta_outside}.

We now consider the bulk regime $|z_j|\le \sqrt{x_j}-\theta$. Fix a small $\delta>0$. If at least one of the bounds
\begin{equation}
|x_1-x_2|<N^{-\delta},\quad |z_1-z_2|<N^{-\delta},\eta_j<N^{-\delta}
\label{eq:global_regime}
\end{equation}
does not hold, then by \eqref{eq:min_beta}, \eqref{eq:beta_perturb} we have $|\beta_+\beta_-|\gtrsim N^{-\delta}$, so the global law-type argument gives (see also \cite[Appendix~B]{Multi_res_llaws} for the gain when either $\eta_i\gg 1$)
\begin{equation}
\left\vert\left\langle \left(G_1B_1\widetilde{G}_2 - M_{1\widetilde{2}}^{B_1}\right)B_2\right\rangle\right\vert \le \frac{N^{\xi+C\delta}}{N\eta_1\eta_2}.
\end{equation}
for some fixed $C>0$ and any fixed $\xi>0$. We may further assume that \eqref{eq:global_regime} holds. We are thus left with showing that this bound can be propagated down to $\min_i|\eta_i|\ge N^{-1+\epsilon}$. For this purpose, we embed the matrix $X$ in an Orstein-Uhlenbeck flow and prove \eqref{eq:2G_av} for matrices with an (almost) order one Gaussian component. The fact that this Gaussian component can be removed at the price of a negligible error follows from a standard Green's function comparison (GFT) argument;  see e.g. \cite[Section~5]{nonHermdecay}. To be precise, the GFT argument \cite[Section~5]{nonHermdecay} is not used only once to remove the order one Gaussian component in one step, but rather repeated many times starting for $\eta=N^{-\delta}$ and decreasing the value of $\eta$ by a factor $N^{-\delta}$ at each step (at each step the Gaussian component to be removed decreases by a factor $N^{-\delta}$); this is what is called \emph{zigzag strategy} (see \cite[Fig.~1-2]{nonHermdecay} for an illustration of this approach).

Fix $T=C N^{-\delta}$, for some $C>0$ to be chosen shortly. We now consider the Orstein-Uhlenbeck flow
\begin{equation}
\label{eq:OU}
\dif X_t =-\frac{1}{2} X_t +\frac{1}{\sqrt{N}}B_t, \qquad\quad X_0=X,
\end{equation}
and the associated \emph{characteristic flow} (see e.g. \cite[Eq.(5.3)-(5.4)]{Cipolloni_meso})
\begin{equation}
\frac{\dif}{\dif t} w_{i,t} = -x_i m_{i,t}-\frac{w_{i,t}}{2},\quad\text{with}\quad m_{i,t}:=m_{x_i}^{z_{i,t}}(w_{i,t}), \qquad\quad \frac{\dif}{\dif t} z_{i,t}=-\frac{z_{i,t}}{2},
\label{eq:char_flow}
\end{equation}
for $i=1,2$. We also denote $\eta_{i,t}:=|\Im w_{i,t}|$. The flow \eqref{eq:OU} is considered for times $t\in [0,T]$, so that for each $w_1,w_2\in\C\setminus\R$ with $\eta_1,\eta_2 \ge N^{-1+\epsilon}$ it possible to choose initial conditions $w_{i,0}$ with $\eta_{i,0}\ge N^{-\delta}$ such that the characteristics $w_{i,t}$ with initial data $w_{i,0}$ satisfy $w_{i,T}=w_i$. It is thus enough to prove \eqref{eq:2G_av} for the resolvents of the Hermitization of the solution of \eqref{eq:2G_av}, with hermitization parameters $z_{i,t}$ and spectral parameters $w_{i,t}$, for any $t\in [0,T]$. 

The following lemma shows that if a target $w_{i,T}$ of the characteristic flow \eqref{eq:char_flow} is chosen in a narrow cone around the imaginary axis, then the entire trajectory still lies in a cone which may be slightly broader.  Since this statement holds both for $i=1$ and $i=2$, we omit the index $i$ for brevity.

\begin{lemma}[Trajectories of the characteristic flow]\label{lem:trajectories} Let $T>0$ be chosen as above. Consider $|z_T|\le \sqrt{x}-\theta$ and $w_T\in\C\setminus\R$. Assume that $|\Re w_T|\le c\eta_T$ for a sufficiently small $c>0$.  Then for any $t\in [0,T]$ it holds that $|\Re w_t|\le c'\eta_t$ for some $c'>0$. The constant $c'>0$ can be chosen arbitrarily small provided that $c>0$ is also arbitrary small.
\end{lemma}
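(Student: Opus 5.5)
Since the characteristic flow \eqref{eq:char_flow} is autonomous in $(z,w)$, the plan is to compare the trajectory with the one started on the imaginary axis and to show that the ratio $\Re w_t/\eta_t$ stays controlled backwards in time. By the rescaling \eqref{eq:M_rescaling} it suffices to take $x=1$, up to constants depending on $c_0$. The first step is a symmetry observation. For $w\in\ii\R_+$ the solution $m$ of \eqref{eq:m_cubic} is purely imaginary, since $m\mapsto -\overline{m}$, $w\mapsto-\overline{w}$ maps solutions to solutions and the solution is unique. Hence the imaginary axis is invariant under \eqref{eq:char_flow}. Moreover $m^z(-\overline{w})=-\overline{m^z(w)}$, so $\Re m$ is odd in $\Re w$.

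The second step is to linearize in $E_t:=\Re w_t$. In the bulk regime $|z_t|\le\sqrt{x}-\theta'$ (note $|z_t|=e^{(T-t)/2}|z_T|$ and $T=CN^{-\delta}\ll\theta$, so $z_t$ stays in the bulk), $m$ is analytic in $w$ near the imaginary axis, with $|\partial_w m|\lesssim \theta^{-C}$ and $\Im m\sim 1$ for $\eta\lesssim 1$. From \eqref{eq:char_flow} we get
\[
\frac{\dif}{\dif t}E_t=-x\Re m_t-\frac{E_t}{2},\qquad \frac{\dif}{\dif t}\eta_t=-x\Im m_t-\frac{\eta_t}{2}.
\]
By the oddness, $\Re m_t=a_tE_t+\mathcal{O}(E_t^2)$ with $a_t=\partial_E\Re m|_{E=0}$, which is bounded. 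This gives $|\tfrac{\dif}{\dif t}E_t|\le C|E_t|$. Running backward from $T$ (with $T\lesssim 1$), Gronwall yields $|E_t|\le e^{C(T-t)}|E_T|\lesssim |E_T|$, as long as the trajectory stays in the cone where the expansion is valid. For large $\eta$ I would instead use $|\Re m|\lesssim |E|/\eta^2$ from \eqref{eq:m_cubic}.

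The third step compares with $\eta_t$. Backward in time $\eta_t$ increases: $\eta_t=\eta_T+\int_t^T(x\Im m_s+\eta_s/2)\dif s\ge\eta_T$, because $\Im m>0$. Hence $|E_t|/\eta_t\le C|E_T|/\eta_T\le Cc=:c'$, which can be made small together with $c$. A continuity (bootstrap) argument closes the assumption that the trajectory remains in the cone $|E_t|\le 2c'\eta_t$, which is where analyticity and the expansion of $m$ hold. I expect the main obstacle to be the uniform bound on $a_t=\partial_E\Re m$ near the imaginary axis for small $\eta$. For this I would differentiate the cubic \eqref{eq:m_cubic} implicitly and use that its derivative in $m$ is bounded below in the bulk, i.e. the stability that also underlies \eqref{eq:beta_expl}--\eqref{eq:min_beta} with $z_1=z_2$, $x_1=x_2$, where $|\beta_+\beta_-|\gtrsim\rho\eta$ only degenerates mildly. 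Alternatively I would use the explicit limit \eqref{eq:u_at_origin} and the fact that $m$ is smooth in $E$ in the bulk.
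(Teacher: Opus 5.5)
Your proof is correct, but it is organized differently from the paper's. The paper needs no Gronwall or bootstrap step. Since $m$ is transported explicitly along characteristics ($\partial_t M_t=M_t/2$ by \eqref{eq:M2derivative}, so $m_t=\ee^{(t-T)/2}m_T$), the $w$-equation in \eqref{eq:char_flow} becomes linear and integrates to the closed form \eqref{eq:wt_eplicit}. This gives $|\Re w_t|\lesssim|\Re w_T|+|\Re m_T|$. The two facts you also use, $\Re m=0$ on $\ii\R$ and Lipschitz continuity of $m$ in the bulk, are then invoked only once, at the terminal point $w_T$, which lies in the cone by hypothesis. Finally $\eta_T\le\eta_t$ concludes.

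Your Gronwall argument needs the Lipschitz bound at every point of the trajectory, which is why you need the continuity argument. In exchange it does not rely on exact solvability of the flow. You also check explicitly that $z_t$ stays in the bulk, a point the paper leaves implicit.

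One detail of your sketch should be corrected: what makes $|\partial_w m|\lesssim 1$ is not the product bound $|\beta_+\beta_-|\gtrsim\rho\eta$. Implicit differentiation of \eqref{eq:m_cubic} gives $\partial_w m=\langle M^2\rangle/(1-x\langle M^2\rangle)$. The denominator is $1-x\langle M^2\rangle=\frac{w}{w+xm}-2xm^2$ (cf. \eqref{eq:A_proof}), which is the \emph{non-degenerate} one of the two nontrivial eigenvalues of $\mathcal{B}_{11}$ and is bounded below in the bulk near $\ii\R_+$. The other eigenvalue, $w/(w+xm)\sim\eta$ on $\ii\R_+$, never enters.

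If you only used $|\beta_+\beta_-|\gtrsim\rho\eta$, you would get $|\Re m_t|\lesssim|E_t|/\eta_t$. The backward Gronwall factor $\exp\bigl(C\int_t^T\eta_s^{-1}\dif s\bigr)$ would then be a power of $1+(T-t)/\eta_T$ with an uncontrolled exponent, which the growth of $\eta_t$ cannot absorb.
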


This result is needed for the application of Lemma~\ref{lem:beta_perturb} effectively along the trajectory. For the proof of Lemma~\ref{lem:trajectories} see Appendix~\ref{sec:beta}.

In the following, for $i=1,2$, we use the short-hand notations
\begin{equation}
G_{i,t}:=G_{x_i}^{(k_i),z_i}(w_i),\,\, i=1,2,\qquad\quad \widetilde{G}_{2,t}:=P^*G_{2,t}P,
\end{equation}
and $B_t^{(k_i)}$ to denote the $k_i$-minor of the matrix-valued Brownian motion $B_t$. Additionally, by $M_{1\widetilde{2},t}^B$ we denote the deterministic approximation of $G_{1,t}B\widetilde{G}_{2,t}$, which is defined as in \eqref{eq:M2def} with $M_1$, $\widetilde{M}_2$ in \eqref{eq:def_B_12} and \eqref{eq:M2def} being replaced by
\[
M_{1,t}:=M^{z_{1,t}}_{x_i}(w_{1,t}),\,\,i=1,2, \qquad\text{and}\qquad \widetilde{M}_{2,t}:=P^*M_{2,t}P,
\]
respectively. Note that analogously to \cite[Eq.(A.19),(A.26)]{Cipolloni_meso}, for $i=1,2$ and $t\in[0,T]$, we obtain
\begin{equation}
\label{eq:M2derivative}
\partial_t M_{i,t}=\frac{M_{i,t}}{2}, \qquad\quad \partial_t \left\langle M_{1\widetilde{2},t}^{B_1}B_2\right\rangle =\left\langle M_{1\widetilde{2},t}^{B_1}B_2\right\rangle+\left\langle \mathcal{S}^{(k_1)}_{x_1}\left[M_{1\widetilde{2},t}^{B_1}\right] M_{\widetilde{2}1,t}^{B_2}\right\rangle.
\end{equation}
Here $M_{\widetilde{2}1,t}^{B_1}$ is the time-dependent version of $M_{\widetilde{2}1}^{B_1}$ defined below \eqref{eq:Cov_to_determ}.

Next, using It\^{o}'s formula and \eqref{eq:M2derivative}, for any deterministic matrices $B_1,B_2\in\C^{(2k_1)\times (2k_1)}$ we obtain 
\begin{equation}
\begin{split}
\label{eq:new2Geq}
&\dif \left\langle (G_{1,t}B_1P^*G_{2,t}P-M_{1\widetilde{2},t}^{B_1})B_2\right\rangle = \left\langle (G_{1,t}B_1P^*G_{2,t}P-M_{1\widetilde{2},t}^{B_1})B_2\right\rangle \dif t \\
&\qquad\quad+ \left\langle G_{1,t}\mathcal{S}^{(k_1)}_{x_1}\left[ G_{1,t}B_1P^*G_{2,t}P\right] P^*G_{2,t}PB_2\right\rangle\dif t-\left\langle \mathcal{S}^{(k_1)}_{x_1}\left[M_{1\widetilde{2},t}^{B_1}\right] M_{\widetilde{2}1,t}^{B_2}\right\rangle\\
&\qquad\quad+ x_1\left\langle G_{1,t}-M_{1,t}\right\rangle \left\langle G_{1,t}^2 B_1P^*G_{2,t}PB_2\right\rangle \dif t + x_2\left\langle G_{2,t}-M_{2,t}\right\rangle \left\langle G_{1,t}B_1P^*G_{2,t}^2PB_2\right\rangle\\
&\qquad\quad- \frac{1}{\sqrt{N}}\left(\left\langle G_{1,t}\dif B^{(k_1)}_tG_{1,t}B_1P^*G_{2,t}PB_2\right\rangle + \left\langle G_{1,t}B_1P^*G_{2,t}\dif B_t^{(k_2)}G_{2,t}PB_2\right\rangle\right).
\end{split}
\end{equation}
In \eqref{eq:new2Geq} we wrote out $\widetilde{G}_2$ in terms of $G_2$ and $P$ for transparency. This equation is analogous to \cite[Eq.(4.20)]{nonHermdecay}, with the only exceptions being the appearance of $x_1, x_2$, and $P$ in \eqref{eq:new2Geq}, which do not cause any changes. In fact, the pre-factors $x_1,x_2$ in the third line of \eqref{eq:new2Geq} are of order one, and the $P$ appearing in all the traces can just be removed using a Schwarz inequalities. We just explain this for a specific term, i.e. for the second term in the third line we use 
\begin{equation}
\begin{split}
\left\vert\langle G_{1,t}B_1P^*G_{2,t}^2PB_2\rangle \right\vert &\lesssim \langle P^*G_{2,t}^* G_{2,t}P\rangle^{1/2} \langle B_2G_{1,t}B_1P^*G_{2,t}G_{2,t}^*PB_1^*G_{1,t}^*B_2^*\rangle^{1/2}\\
&\lesssim \frac{\lVert PB_1^*G_{1,t}^*B_2^*B_2G_{1,t}B_1P^*\rVert^{1/2}}{\eta_{2,t}}\left\vert\langle \Im G_{2,t}\rangle\right\vert\lesssim \frac{1}{\eta_{1,t}\eta_{2,t}},
\end{split}
\label{eq:propagator_entries}
\end{equation}
with very high probability, where in the last inequality we used the single resolvent local law from Theorem~\ref{theo:1Gllaw}. This is exactly the same bound one would have obtained if $P$ was not present (see e.g.\footnote{We stress that in the current case we do not need to extract the gain from the parameter $\gamma_s$, using the notation in these references.} \cite[Eq.(4.25)]{nonHermdecay} and \cite[Eq.(5.40)]{univ_extr}). All the other terms with $P$ can be treated analogously. 

For the analysis of \eqref{eq:new2Geq} we need the following input, which is the analogue of \cite[Eq.(4.27)]{nonHermdecay}. The proof of this lemma is postponed to Appendix~\ref{sec:beta}.
 
\begin{lemma}\label{lem:M2_bound} Consider $|z_j|\le\sqrt{x_j}-\theta$ and $w_j\in \C\setminus\R$ for $j=1,2$. For any $B_1,B_2\in\C^{(2k_1)\times (2k_1)}$ it holds that
\begin{equation}
x_1\left\vert\left\langle M_{1\widetilde{2}}^{B_1}B_2\right\rangle\right\vert \le \|B_1\|\|B_2\|\sqrt{\frac{x_1\pi\rho_1}{\eta_1}\frac{x_2\pi\rho_2}{\eta_2}}.
\label{eq:M2_bound1}
\end{equation}
Assume further that $|\Re w_j|\le c\eta_j$ for sufficiently small $c>0$. Then we have
\begin{equation}
x_1\left\vert \left\langle M_{1\widetilde{2}}^{E_\sigma}B\right\rangle\right\vert + x_1\left\vert \left\langle M_{1\widetilde{2}}^{B}E_\sigma\right\rangle\right\vert \lesssim \left(|x_1-x_2|+|z_1-z_2|+|w_1|+|w_2|\right)\|B\|\sqrt{\frac{x_1\pi\rho_1}{\eta_1}\frac{x_2\pi\rho_2}{\eta_2}},
\label{eq:M2_bound2}
\end{equation}
for $\sigma:=\mathrm{sgn}(\Im w_1\Im w_2)$ and $E_\sigma:=E_\sigma^{(k_1)}$. This bound holds uniformly in $x_j\in [c_0,1]$ with $x_1\le x_2$, $|z_j|< \sqrt{x_j}$, for any fixed $c_0>0$.
\end{lemma}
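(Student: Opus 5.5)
Both bounds will follow from the explicit two-dimensional structure of $\mathcal{B}_{1\widetilde{2}}$, via \eqref{eq:B_V}--\eqref{eq:P_inverse}. All relevant matrices are $2\times2$ block-constant, so $\mathcal{B}_{1\widetilde{2}}$ acts as the identity on the complement of the span of $M_1E_\pm\widetilde{M}_2$. On that span it acts through a $2\times 2$ matrix $\mathcal{P}'$, the analogue of $\mathcal{P}$ with the order of $M_1$ and $\widetilde{M}_2$ reversed. Applying $\mathcal{B}_{1\widetilde{2}}^{-1}$ to $M_1B_1\widetilde{M}_2$ and tracing against $B_2$ therefore reduces $\langle M_{1\widetilde{2}}^{B_1}B_2\rangle$ to $\langle M_1B_1\widetilde{M}_2B_2\rangle$ plus a correction. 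That correction is a bilinear form in the vectors $(\langle M_1B_1\widetilde{M}_2E_\pm\rangle)$ and $(\langle M_1E_\pm\widetilde{M}_2B_2\rangle)$, weighted by $x_1(\mathcal{P}')^{-1}$.

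For \eqref{eq:M2_bound1} I will not expand this formula but use a positivity (Cauchy--Schwarz) argument. This mirrors the standard proof of the analogous bound for $x_1=x_2$ in \cite[Eq.(4.27)]{nonHermdecay}. The operator $R\mapsto M_1\mathcal{S}^{(k_1)}_{x_1}[R]\widetilde{M}_2$ factorizes as a product of the ``one-sided'' operators $\mathcal{C}_1: R\mapsto \sqrt{x_1}M_1 R M_1^*$-type maps. The key inputs are the Ward-type identities $x_1\langle M_1M_1^*\rangle = \pi\rho_1x_1/(\eta_1+x_1\pi\rho_1)$, which follow from taking the imaginary part of \eqref{eq:MDE} after \eqref{eq:M_rescaling}, and their analogue for $\widetilde{M}_2$. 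Since $\widetilde{M}_2$ has the same block constants as $M_2$, the latter gives $x_2\langle M_2M_2^*\rangle$ with the factor $x_2$. I will write
\[
\langle M_{1\widetilde 2}^{B_1}B_2\rangle=\sum_{n\ge0}\big\langle (M_1\mathcal{S}^{(k_1)}_{x_1}[\cdot]\widetilde M_2)^n[M_1B_1\widetilde M_2]B_2\big\rangle
\]
for $\eta_j$ large, bound each term by Cauchy--Schwarz in the Hilbert--Schmidt inner product weighted by $E_\pm$, and sum the geometric series. Each step contributes a factor $\sqrt{x_1\langle M_1M_1^*\rangle\,x_1\langle \widetilde M_2\widetilde M_2^*\rangle}\le \sqrt{x_1\langle M_1M_1^*\rangle\,x_2\langle M_2M_2^*\rangle}<1$, using $x_1\le x_2$. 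The sum of the series is then exactly $\sqrt{\frac{x_1\pi\rho_1}{\eta_1}\frac{x_2\pi\rho_2}{\eta_2}}$ up to the $\|B_1\|\|B_2\|$ factor. The final bound is analytic in $w_j$, so it extends to all $w_j\in\C\setminus\R$ by continuation, or directly by the same Cauchy--Schwarz applied to $\mathcal{P}'^{-1}$.

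For \eqref{eq:M2_bound2} the aim is to exploit cancellation in the special direction $E_\sigma$. For $z_1=z_2$, $x_1=x_2$ and $w_1=\pm \bar w_2$ on the imaginary axis, the direction $M_1E_\sigma\widetilde{M}_2$ is exactly the near-kernel of $\mathcal{B}_{1\widetilde 2}$, and the projected quantities $\langle M_1E_\sigma\widetilde{M}_2 E_{\mp}\rangle$ entering the correction vanish at $w=0$. I will compute $\langle M_1E_\sigma\widetilde M_2E_{\pm}\rangle$ explicitly from \eqref{eq:def_u}, in terms of $m_1m_2$ and $u_1u_2z_1\bar z_2$. I will then show that the numerator of the inverse in \eqref{eq:P_inverse} vanishes at the reference point $(x_1,z_1,0)=(x_2,z_2,0)$. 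This uses that at $w=0$ we have $m=\ii\sqrt{x-|z|^2}/x$ and $u=1/x$ by \eqref{eq:u_at_origin}, and that $\sigma=\mathrm{sgn}(\Im w_1\Im w_2)$ matches the sign of $m_1m_2$. Since these quantities are Lipschitz in $(x_j,z_j,w_j)$ in the bulk $|z_j|\le\sqrt{x_j}-\theta$, the numerator is $O(|x_1-x_2|+|z_1-z_2|+|w_1|+|w_2|)$.

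The denominator $\det\mathcal{P}'=\beta_+\beta_-$ is lower bounded by Lemma~\ref{lem:beta} and Lemma~\ref{lem:beta_perturb} in the cone $|\Re w_j|\le c\eta_j$. This step is the main obstacle: the ratio must be compared not to a constant but to $\sqrt{\rho_1\rho_2/(\eta_1\eta_2)}$. The lower bound \eqref{eq:min_beta} involves $\rho_2^2|x_2-x_1|+|z_1-z_2|^2+\rho_1\eta_1+\rho_2\eta_2$, while the numerator is only linear in $|x_1-x_2|$ and $|z_1-z_2|$. I therefore plan to show that the numerator is in fact of order $(\text{distance})\times\sqrt{\beta_+\beta_-}$-type, or to split into the cases $|\beta_+\beta_-|\gtrsim \eta_1+\eta_2$ and the complementary one, in which $|x_1-x_2|$ and $|z_1-z_2|$ are small. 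In that second case I combine the explicit vanishing with $\rho_j\sim1$ in the bulk to reach $\lesssim \text{dist}\cdot(\eta_1\eta_2)^{-1/2}$. The second term $\langle M^{B}_{1\widetilde 2}E_\sigma\rangle$ is handled identically via the swap identity \eqref{eq:12_to_21}.
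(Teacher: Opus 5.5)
Your plan for \eqref{eq:M2_bound2} is the paper's argument. Write $\mathrm{dist}:=|x_1-x_2|+|z_1-z_2|+|w_1|+|w_2|$. Express $\langle M_{1\widetilde 2}^{B}E_\sigma\rangle$ through the explicit inverse \eqref{eq:P_inverse}. For $\sigma=+$ the numerator contains only $1+x_1\langle M_1E_-\widetilde M_2E_-\rangle$ and $\langle M_1E_-\widetilde M_2\rangle$ times bounded factors. Both vanish when $x_1=x_2$, $z_1=z_2$, $w_1=w_2\to0$, so they are $\mathcal{O}(\mathrm{dist})$; then divide by $|\beta_+\beta_-|$.

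For \eqref{eq:M2_bound1} your route is genuinely different. The paper applies Cauchy--Schwarz and the Ward identity to the resolvents, $x_1|\langle G_1B_1\widetilde G_2B_2\rangle|\le\|B_1\|\|B_2\|(x_1|\langle\Im G_1\rangle|\eta_1^{-1}\,x_2|\langle\Im G_2\rangle|\eta_2^{-1})^{1/2}$. It then transfers this to $M_{1\widetilde2}^{B_1}$ by the tensorization argument, so positivity comes for free and nothing about $\mathcal{S}$ is used. You stay at the deterministic level. That is self-contained and gives the sharp constant, but only if the contraction is measured in the right norm; the factorization into maps $R\mapsto M_1RM_1^*$ you mention plays no role.

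Set $\Delta:=\mathcal{S}^{(k_1)}_{x_1}[M_{1\widetilde2}^{B_1}]=\mathrm{diag}(d_1,d_2)$. Then $\Delta=\mathcal{S}^{(k_1)}_{x_1}[M_1B_1\widetilde M_2]+\mathcal{L}\Delta$ with
\[
\mathcal{L}(d_1,d_2)=x_1\big(\overline{z}_1z_2u_1u_2\,d_1+m_1m_2\,d_2,\; m_1m_2\,d_1+z_1\overline{z}_2u_1u_2\,d_2\big),
\]
so $\|\mathcal{L}\|_{\ell^2\to\ell^2}\le x_1(|z_1u_1||z_2u_2|+|m_1||m_2|)\le x_1\langle M_1M_1^*\rangle^{1/2}\langle\widetilde M_2\widetilde M_2^*\rangle^{1/2}=:q$. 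If instead you expand the $n$-th power of $R\mapsto M_1\mathcal{S}^{(k_1)}_{x_1}[R]\widetilde M_2$ into $2^n$ sign strings and bound each separately, you lose a factor $2^n$.

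Your Ward identity $x\langle MM^*\rangle=x\pi\rho/(\eta+x\pi\rho)$ holds for every $w\in\C\setminus\R$. Hence, using $x_1\le x_2$, $q\le\sqrt{a_1a_2}<1$ everywhere, with $a_j:=x_j\pi\rho_j/(\eta_j+x_j\pi\rho_j)$. So drop the restriction to large $\eta_j$ and the analytic continuation, which is meaningless for an inequality anyway. Cauchy--Schwarz applied to the zeroth-order term, to $\mathcal{S}^{(k_1)}_{x_1}[M_1B_1\widetilde M_2]$, and to the pairing $\Delta\mapsto x_1\langle M_1\Delta\widetilde M_2B_2\rangle$ gives $x_1|\langle M_{1\widetilde2}^{B_1}B_2\rangle|\le\|B_1\|\|B_2\|\,q/(1-q)$. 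By monotonicity and AM--GM, $q/(1-q)\le\sqrt{a_1a_2/((1-a_1)(1-a_2))}$, which is the right-hand side of \eqref{eq:M2_bound1}. This is an inequality, not the identity you state.

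In the second part, several heuristics need correcting.

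The near-kernel of $\mathcal{B}_{1\widetilde2}$ is $M_1E_{-\sigma}\widetilde M_2$, not $M_1E_\sigma\widetilde M_2$. For $\sigma=+$ and $z_1=z_2$, the matrix $\mathcal{P}$ in \eqref{eq:B_V} is diagonal, and its small entry $1+x_1\langle M_1E_-\widetilde M_2E_-\rangle$ belongs to the $E_-$ direction. If $M_1E_\sigma\widetilde M_2$ were the near-kernel, $\langle M_{1\widetilde2}^{E_\sigma}B\rangle$ would be of order $\eta^{-1}$ at $x_1=x_2$, $z_1=z_2$, $w_1=w_2=\ii\eta$. There the right-hand side of \eqref{eq:M2_bound2} is $\mathcal{O}(\|B\|)$, so the lemma would be false. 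The gain comes precisely from $E_\sigma$ being transversal to the bad direction. Also, on the imaginary axis $m_1m_2$ has sign $-\sigma$, not $\sigma$.

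The ``main obstacle'' does not exist. In the bulk with $\eta_j\lesssim1$ one has $\rho_j\gtrsim1$, so $|\beta_+\beta_-|\gtrsim\rho_1\eta_1+\rho_2\eta_2\ge2\sqrt{\rho_1\rho_2\eta_1\eta_2}$ already gives $|\beta_+\beta_-|^{-1}\lesssim\sqrt{\rho_1\rho_2/(\eta_1\eta_2)}$. No case split is needed. Your alternative, a numerator of order $\mathrm{dist}\cdot|\beta_+\beta_-|^{1/2}$, is false: in the configuration above the numerator is of order $\eta$ for generic $B$, while $\mathrm{dist}\cdot|\beta_+\beta_-|^{1/2}\sim\eta^{3/2}$.

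Finally, Lemma~\ref{lem:beta_perturb} as stated has error $\mathcal{O}(|E_1|/\eta_1+|E_2|/\eta_2)=\mathcal{O}(c)$. That does not preserve a lower bound of order $\eta_1+\eta_2$ once $\eta_j\ll c$. Use instead that $M_j$ is Lipschitz in $w_j$ in the bulk (as in the proof of Lemma~\ref{lem:trajectories}): $\partial_wM_j=M_j^2/(1-x_j\langle M_j^2\rangle)$ with $|1-x_j\langle M_j^2\rangle|\gtrsim1$. Then $\beta_+\beta_-$ changes by only $\mathcal{O}(c(\eta_1+\eta_2))$ inside the cone.
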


Combining \eqref{eq:global_regime} with the bound $|\Re w_{j,t}|\le c'\eta_{j,t}$ from Lemma~\ref{lem:trajectories}, we get that  the first factor in the rhs. of \eqref{eq:M2_bound2} is at most of order $N^{-\delta}$ along the entire trajectory. Observe additionally that 
\begin{equation}
\int_s^t \sqrt{\frac{x_1\pi\rho_{1,r}}{\eta_{1,r}}\frac{x_2\pi\rho_{2,r}}{\eta_{2,r}}}\dif r \le  \frac{1}{2}\int_s^t \left(\frac{x_1\pi\rho_{1,r}}{\eta_{1,r}}+ \frac{x_2\pi\rho_{2,r}}{\eta_{2,r}}\right)\dif r \nc \le\frac{1}{2}\log \frac{\eta_{1,s}\eta_{2,s}}{\eta_{1,t}\eta_{2,t}},\quad \forall\, 0\le s\le t\le T.
\end{equation}
Here in the second bound we used that $x_j\pi\rho_{j,r}=-\partial_r \eta_{j,r}-\eta_{j,r}/2\le-\partial_r \eta_{j,r}$ by \eqref{eq:char_flow}. \nc Given these inputs,the analysis of \eqref{eq:new2Geq} leading to
\begin{equation}
\big\vert\big\langle \big(G_{1,t}B_1\widetilde{G}_{2,t} - M_{1\widetilde{2},t}^{B_1}\big)B_2\big\rangle\big\vert \le \frac{N^{\xi+C\delta}}{N\eta_{1,t}\eta_{2,t}},
\label{eq:ll_proof_fin}
\end{equation}
is analogous to the proof in \cite[Proposition~4.3]{nonHermdecay} (in fact simpler since here we do not carry the dependence of the error on $\gamma_s$, using the notation therein). We omit the details as they are completely analogous. Since $\delta,\xi>0$ can be chosen arbitrarily small in \eqref{eq:ll_proof_fin}, this concludes \eqref{eq:2G_av} for matrices with a Gaussian component of size $T$, and thus the proof.
\end{proof}

\appendix

\section{Proofs of additional technical results}

\subsection{Properties of the bilinear form $C^{(G)}$}\label{sec:C_properties} 

In this section we show that the integrals in \eqref{eq:C_Gauss} are well-defined and prove Corollary~\ref{cor:general_testfn}.

Consider first the case when $x_1\neq x_2$ and assume without loss of generality that $x_1<x_2$. We have that $K>c$ for any $z_j\in\DD_{x_j}$, $j=1,2$ and some $c>0$ depending on $x_1,x_2$. Indeed, both terms in the definition of $K$ are non-negative, and the second one vanishes only when $|z_2|=\sqrt{x_2}$, in which case $|z_1-z_2|\ge |\sqrt{x_1}-\sqrt{x_2}|>0$. Together with the compactness argument this implies the desired bound on $K$. Therefore, the integral in the first line of \eqref{eq:C_Gauss} is not singular and thus is well-defined under the integrability assumption on the derivatives of $f_1,f_2$. The integral in the second line of \eqref{eq:C_Gauss} is also well-defined, since
\begin{equation}
|z_1\overline{z}_2|=\sqrt{x_1x_2}>\min\{x_1,x_2\},\quad \forall \, z_j\in \partial\DD_{x_j},\, j=1,2.
\end{equation}

In the case $x_1=x_2\in(0,1]$ the integral in the second line of \eqref{eq:C_Gauss} is divergent, and \eqref{eq:C_Gauss} is understood in the limiting sense:
\begin{equation}
C^{(G)}\big((x,f_1),(x,f_2)\big):=\!\!\lim_{n\to\infty}C^{(G)}\big((x_1^{(n)}\!\!,f_1),(x_2^{(n)}\!\!,f_2)\big) = \frac{1}{4\pi}\langle \nabla f_1,\nabla f_2\rangle_{L^2(\DD_x)} + \frac{1}{2}\langle f_1,f_2\rangle_{{\dot{H}}^{1/2}(\partial\DD_x)},
\label{eq:C_limit}
\end{equation}
where the limit is taken with respect to any sequence $\{(x_1^{(n)},x_2^{(n)})\}\subset (0,1]$ converging to $(x,x)$, such that $x_1^{(n)}\neq x_2^{(n)}$ for any $n\in\N$. In \eqref{eq:C_limit} we denoted
\begin{equation}
\begin{split}
\langle \nabla f_1,\nabla f_2\rangle_{L^2(\DD_x)}:=&\int_{\DD_x} \langle \nabla f_1(z),\nabla f_2(z)\rangle \dif^2 z,\\
\langle f_1,f_2\rangle_{{\dot{H}}^{1/2}(\partial\DD_x)}:=&\sum_{k\in\mathbf{Z}} |k|\overline{\widehat{f_1\big|_{\partial\DD_x}}(k)}\widehat{f_2\big|_{\partial\DD_x}}(k),\quad \widehat{f_j\big|_{\partial\DD_x}}:=\frac{1}{2\pi} \int_0^{2\pi} f_j(\sqrt{x}\ee^{\ii\theta})\ee^{-\ii\theta k}\dif\theta,\,\, k\in\mathbf{Z}.
\end{split}
\end{equation}
We prove \eqref{eq:C_limit} later in this section. The second identity in \eqref{eq:C_limit} shows that the limit in \eqref{eq:C_limit} exists and does not depend on the choice of $\{(x_1^{(n)},x_2^{(n)})\}$, so the lhs. of \eqref{eq:C_limit} is well-defined. We also note that the rhs. of \eqref{eq:C_limit} is exactly the Gaussian contribution appearing in \cite[Eq.(2.6)]{macroCLT_complex} in the covariance of linear statistics of the same i.i.d. matrix.

To prove \eqref{eq:C_limit} and Corollary~\ref{cor:general_testfn}, it is convenient to write $C^{(G)}$ in the form
\begin{equation}
C^{(G)}\big((x_1,f_1),(x_2,f_2)\big) = \frac{1}{8\pi^2} \int_\C\dif^2 z_1\int_\C \dif^2 z_2 \overline{\Delta f_1(z_1)}\Delta f_2(z_2) \Theta \big((x_1,z_1),(x_2,z_2)\big),
\label{eq:C_Gauss_app}
\end{equation}
as it follows from Lemma~\ref{lem:UV_int}. Here we recalled the definition of $\Theta$ from \eqref{eq:def_Theta} and used \eqref{eq:V_to_Theta} to integrate $V_{12}$ over $\eta_1,\eta_2$. The rhs. of \eqref{eq:C_Gauss_app} is well-defined for any $x_1,x_2\in (0,1]$ and $f_1,f_2\in H^2_0(\Omega)$ with $\Omega$ defined in Theorem~\ref{theo:main}. In the arguments below we prefer to use \eqref{eq:C_Gauss_app} as the definition of $C^{(G)}$ instead of \eqref{eq:C_Gauss}, since this simplifies the analysis.

We derive both \eqref{eq:C_limit} and Corollary~\ref{cor:general_testfn} from the following lemma. 

\begin{lemma}[Regularity of $C^{(G)}$ in $x_1,x_2$]\label{lem:C_reg} Let $\Omega$ be defined as in Theorem~\ref{theo:main}. Fix (small) $c_0,\varepsilon>0$. For any $f_1,f_2\in H_0^{2}(\Omega)$ and $x_j,x_j'\in [c_0,1]$, $j=1,2$, it holds that
\begin{equation}
\left\vert C^{(G)}\big((x_1,f_1),(x_2,f_2)\big) - C^{(G)}\big((x_1',f_1),(x_2',f_2)\big)\right\vert \lesssim \left(|x_1-x_1'|+|x_2-x_2'|\right)^{1/2-\varepsilon} \prod_{j=1}^2\|\Delta f_j\|_{L^2(\Omega)},
\label{eq:C_reg}
\end{equation}
where the implicit constant depends only on $c_0$ and $\varepsilon$.
\end{lemma}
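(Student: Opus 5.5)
Using the representation \eqref{eq:C_Gauss_app}, the difference in \eqref{eq:C_reg} equals
\[
\frac{1}{8\pi^2}\int_\C\dif^2 z_1\int_\C\dif^2 z_2\,\overline{\Delta f_1(z_1)}\Delta f_2(z_2)\Big[\Theta\big((x_1,z_1),(x_2,z_2)\big)-\Theta\big((x_1',z_1),(x_2',z_2)\big)\Big].
\]
By Cauchy--Schwarz, and since $f_j$ are supported in the bounded $\Omega$, it suffices to show that
\[
\Big\|\Theta\big((x_1,\cdot),(x_2,\cdot)\big)-\Theta\big((x_1',\cdot),(x_2',\cdot)\big)\Big\|_{L^2(\Omega\times\Omega)}\lesssim \big(|x_1-x_1'|+|x_2-x_2'|\big)^{1/2-\varepsilon}.
\]
By the triangle inequality we may vary one parameter at a time; say $x_1\to x_1'$ with $x_2$ fixed. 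The other case is analogous, with a symmetric version of \eqref{eq:def_Theta}, obtained by swapping roles when the order of $x_1,x_2$ changes. Crossing $x_1=x_2$ is handled by inserting the intermediate point $x_1=x_2$, where $\Theta$ is still given by the $\eta\to0$ limit in \eqref{eq:V_to_Theta}. I expect the four-case formula \eqref{eq:def_Theta} to remain valid there, with $K=|z_1-z_2|^2$.

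Set $h:=|x_1-x_1'|$. I split $\Omega\times\Omega$ into a good region and a bad region. The good region is where neither $z_1$ nor $z_2$ lies in an annulus of width $\sim h^{1-2\varepsilon}$ around the circles $\partial\DD_{x_1},\partial\DD_{x_1'},\partial\DD_{x_2}$, and where $|z_1-z_2|\ge h^{1/2}$. There $\Theta$ is given by the same branch of \eqref{eq:def_Theta} for both parameter values. Its $x_1$-derivative is bounded by
\[
C\Big(K^{-1}|x_1-x_2|\,|z_1|^2/x_1^2+K^{-1}B\Big)\lesssim K^{-1}
\]
inside $\DD_{x_1}\times\DD_{x_2}$. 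In the other branches the derivative is bounded by $|x_2z_1-x_1z_2|^{-1}$ or $|x_1-z_1\bar z_2|^{-1}$, and these quantities are bounded below away from the circles, as in the well-definedness argument above. Hence the good-region contribution to the $L^2$ norm is at most $h\,\|K^{-1}\|_{L^2(\{K\ge h\})}\lesssim h\cdot h^{-1/2}|\log h|^{1/2}$, using $\int_{|z_1-z_2|\ge h^{1/2}}|z_1-z_2|^{-4}\,\dif^2z_1\,\dif^2z_2\lesssim h^{-1}$.

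On the bad region I bound each $\Theta$ separately. The function $\Theta$ has only logarithmic singularities, of the types $\log K$ and $\log|z_1-z_2|$, and it is uniformly bounded away from the diagonal. It is therefore in every $L^p$ locally, uniformly in the parameters. Hölder's inequality then gives
\[
\|\Theta\|_{L^2(\mathrm{bad})}\lesssim |\mathrm{bad}|^{1/2-\varepsilon'}\lesssim h^{(1-2\varepsilon)(1/2-\varepsilon')},
\]
since the bad region has measure $O(h^{1-2\varepsilon})$. Combining the two regions gives the exponent $1/2-\varepsilon$.

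The main obstacle is the jump of the branch of \eqref{eq:def_Theta} across the moving circles, and continuity of $\Theta$ across them (so that no boundary delta terms survive). The annulus cutoff plus $L^p$ integrability handles this, which is why the exponent is only $1/2-\varepsilon$. The identity \eqref{eq:C_limit} then follows by passing to the limit along $x_j^{(n)}$, using the lemma for existence and the computation of \cite{macroCLT_complex} at $x_1=x_2$. Corollary~\ref{cor:general_testfn} follows by writing $\Cov(L_N(F_1),L_N(F_2))$ as a double sum over levels of \eqref{eq:Cov_aim} and replacing the Riemann sums by integrals. The replacement error is controlled by \eqref{eq:C_reg} together with the Hölder continuity of $F_j$ in $x$.
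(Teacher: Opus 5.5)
Your proposal is correct. It shares the paper's overall structure: the representation \eqref{eq:C_Gauss_app}, Cauchy--Schwarz against $\|\Delta f_j\|_{L^2(\Omega)}$, a thin region around the moving circles where the branch of $\Theta$ switches, and a same-branch region where only the singularity at $z_1=z_2$ matters. Where you differ is in how you estimate the same-branch region.

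The paper uses a single pointwise inequality:
\[
|\log K-\log K'|=\log\Big(1+\frac{|K-K'|}{\min\{K,K'\}}\Big)\lesssim\Big(\frac{|K-K'|}{\min\{K,K'\}}\Big)^{1/2-\varepsilon}\lesssim\Big(\frac{|x_1-x_1'|+|x_2-x_2'|}{|z_1-z_2|^2}\Big)^{1/2-\varepsilon}.
\]
The right-hand side is square integrable in $z_2$, uniformly in $z_1$. This one bound has several consequences:
\begin{itemize}
\item it handles all scales at once and treats both parameters simultaneously;
\item it does not care which case of \eqref{eq:def_K} applies, since $|K-K'|\lesssim h$ in every case;
\item so it needs no reduction to one parameter at a time, no intermediate point at $x_1=x_2$, and no cutoff near the diagonal.
\end{itemize}
The paper also takes the annuli $\mathcal{A}_j$ of width $\sim h$ and applies Cauchy--Schwarz in $z_1$ only, together with $\sup_{z_1}\|\Theta(z_1,\cdot)-\Theta'(z_1,\cdot)\|_{L^2}$. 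That part therefore costs exactly $h^{1/2}$, with no $\varepsilon$-loss.

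Your version works on two scales instead. Off the strip $|z_1-z_2|<h^{1/2}$ you use the mean value theorem with $|\partial_{x_1}\Theta|\lesssim K^{-1}\le|z_1-z_2|^{-2}$. On the strip and the widened annuli you use an $L^p$--H\"older bound. This is more elementary, since it only needs first-derivative bounds, but it requires more case bookkeeping. It reaches the same exponent $1/2-\varepsilon$.

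One step needs to be made quantitative. You say the other branches are controlled because $|x_2z_1-x_1z_2|$ and $|x_1-z_1\bar z_2|$ are ``bounded below away from the circles, as in the well-definedness argument''. As stated, this does not hold uniformly in the parameters: that argument degenerates as $x_1\to x_2$, where both quantities vanish at $z_1=z_2$ on the circle. What you actually need holds in the respective branches (with $x_1\le x_2$):
\[
|x_2z_1-x_1z_2|\ge x_1|z_1-z_2|,\qquad |x_1-z_1\bar z_2|\ge\sqrt{x_1}\,|z_1-z_2|.
\]
To see this, write $z_j=r_j\ee^{\ii\phi_j}$ and compare the radial and angular parts separately.

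These bounds give $|\partial_{x_j}\Theta|\lesssim|z_1-z_2|^{-1}\le h^{-1/2}$ on your good region. They are also exactly what justifies $|\Theta|\lesssim 1+\big|\log|z_1-z_2|\big|$ uniformly in the parameters, which your H\"older step on the bad region relies on.

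Finally, your hedged expectation about $x_1=x_2$ is not an extra input. The one-sided formulas for $\Theta$ agree there, and the paper takes \eqref{eq:C_Gauss_app} with this $\Theta$ as the definition of $C^{(G)}$.
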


\begin{proof}[Proof of Lemma~\ref{lem:C_reg}] By \eqref{eq:C_Gauss_app} it holds that the lhs. of \eqref{eq:C_reg} has an upper bound of order
\begin{equation}
\int_\C\dif^2 z_1\int_\C \dif^2 z_2 |\Delta f_1(z_1)||\Delta f_2(z_2)| \left\vert \Theta\big((x_1,z_1),(x_2,z_2)\big)-\Theta\big((x_1,z_1),(x_2,z_2)\big)\right\vert.
\label{eq:C_difference}
\end{equation}
Denote the first $\Theta(...)$ in \eqref{eq:C_difference} simply by $\Theta(z_1,z_2)$ and the second by $\Theta'(z_1,z_2)$. We further denote
\begin{equation}
\mathcal{A}_j:=\big\lbrace z_j\in \C: \min\{x_j,x_j'\}\le |z_j|\le \max\{x_j,x_j'\}\big\rbrace,\quad j=1,2.
\end{equation}
The integral in \eqref{eq:C_difference} over $z_1\in \mathcal{A}_1$ and $z_2\in\C$ has an upper bound
\begin{equation}
\begin{split}
&\int_{\mathcal{A}_1}\dif^2 z_1|\Delta f_1(z_1)| \|\Delta f_2\|_{L^2(\Omega)} \|(\Theta(z_1,\cdot)-\Theta'(z_1,\cdot)\|_{L^2(\Omega)}\\
&\quad \le |\mathcal{A}_1|^{1/2} \sup_{z_1\in \Omega}\|(\Theta(z_1,\cdot)-\Theta'(z_1,\cdot)\|_{L^2(\Omega)}\prod_{j=1}^2 \|\Delta f_j\|_{L^2(\Omega)}.
\end{split}
\label{eq:Theta_int_bound}
\end{equation}
The $L^2$ norm in the first line of \eqref{eq:Theta_int_bound} is taken with respect to $z_2$. 
Since $\Theta$ may have only $\log$ singularities, $\Theta(z_1,\cdot)\in L^2(\Omega)$. We also have from \eqref{eq:def_Theta} that the supremum in \eqref{eq:Theta_int_bound} is bounded from above by some universal constant. Together with the bound $|\mathcal{A}_1|\lesssim |x_1-x_1'|$, \eqref{eq:Theta_int_bound} implies that the contribution from the integration regime $z_1\in\mathcal{A}_1$ in \eqref{eq:C_difference} is bounded by the rhs. of \eqref{eq:C_reg}. For $z_2\in\mathcal{A}_2$ the argument is analogous.

We are left with estimating the integral in \eqref{eq:C_difference} over four regimes $\Omega_1\times \Omega_2$, where 
\begin{equation}
\Omega_j\in \left\lbrace \DD_{\min\{x_j,x_j'\}}, \DD_{\max\{x_j,x_j'\}}^c\right\rbrace,\quad j=1,2.
\end{equation}
Here the superscript $c$ denotes the complement of a set. Since for all these regimes the proofs are similar, we consider only the exemplary case $\Omega_j:=\DD_{\min\{x_j,x_j'\}}$, $j=1,2$. In this regime both $\Theta$ and $\Theta'$ are given by the first line of \eqref{eq:def_Theta}. The contribution to \eqref{eq:C_difference} from the second term in this line is immediately estimated from above by the rhs. of \eqref{eq:C_reg}. Thus, we further focus on the first term in the first line of \eqref{eq:def_Theta}. Following the same convention as discussed below \eqref{eq:C_difference}, we introduce the notations $K(z_1,z_2)$ and $K'(z_1,z_2)$. For any fixed $\varepsilon\in (0,1/2)$ it holds that
\begin{equation}
\left\vert\log K - \log K'\right\vert = \log \left(1 + \frac{|K-K'|}{\min\{K,K'\}}\right) \lesssim \left(\frac{|K-K'|}{\min\{K,K'\}}\right)^{1/2-\varepsilon}\!\!\!\!\lesssim \left(\frac{|x_1-x_1'|+|x_2-x_2'|}{|z_1-z_2|^2}\right)^{1/2-\varepsilon}\!\!,
\label{eq:K_difference}
\end{equation}
uniformly in $z_j\in\Omega_j$. In the last step in \eqref{eq:K_difference} we used the explicit form of $K$ from \eqref{eq:def_K}. Since the singularity $|z|^{-2+2\varepsilon}$ is integrable near origin in $\C$, \eqref{eq:K_difference} implies
\begin{equation}
\sup_{z_1\in\Omega}\left\lVert\log K(z_1,\cdot)-\log K'(z_1,\cdot)\right\rVert_{L^2(\Omega_2)}\lesssim \left(|x_1-x_1'|+|x_2-x_2'|\right)^{1/2-\varepsilon}.
\label{eq:sup_L2_bound}
\end{equation}
Finally, we estimate the integral in \eqref{eq:C_difference} in the regime $\Omega_1\times\Omega_2$ similarly to \eqref{eq:Theta_int_bound}, and gain smallness from \eqref{eq:sup_L2_bound} instead of the volume factor. This completes the proof of Lemma~\ref{lem:C_reg}.
\end{proof}

Now we are ready to prove \eqref{eq:C_limit}.

\begin{proof}[Proof of \eqref{eq:C_limit}] Lemma~\ref{lem:C_reg} implies that the limit in \eqref{eq:C_limit} equals to the integral in the rhs. of \eqref{eq:C_Gauss_app} with $x_1=x_2=x$. It is known from \cite[Lemma~4.8]{macroCLT_complex} that this integral equals to the rhs. of \eqref{eq:C_limit} in the special case $x=1$. In the general case $x\in (0,1]$ we apply \cite[Lemma~4.8]{macroCLT_complex} for the test functions $f_1(x^{-1/2}z_1)$ and $f_2(x^{-1/2}z_2)$, and obtain the desired identity.
\end{proof}

We conclude this section by the proof of Corollary~\ref{cor:general_testfn}.

\begin{proof}[Proof of Corollary~\ref{cor:general_testfn}] We focus on the proof of \eqref{eq:cor_general_testfn}, while for higher order moments the argument is analogous. For $k\in[N]$ denote $x_k:=k/N$. The unrestricted sums over $k, l$ below run over $[N]$, though non-zero contribution comes only from $[c_0N,N]$. Applying \eqref{eq:main_macro} for $n=2$ and test functions $F_1(x_{k},\cdot)$, $F_2(x_{l},\cdot)$, we get
\begin{equation}
N^{-2}\Cov\left(L_N(F_1),L_N(F_2)\right) = N^{-2}\sum_{k,l} C_\beta \Big(\big(x_{k}, F_1(x_{k},\cdot)\big),\big(x_{l}, F_1(x_{l},\cdot)\big)\Big) + \mathcal{O}(N^{-c}).
\label{eq:3d_to_2d}
\end{equation}
The term (together with the factor $N^{-2}$) corresponding to $k,l$ in the rhs. of \eqref{eq:3d_to_2d}, equals to
\begin{equation}
\begin{split}
&\int_{x_k-1/N}^{x_k}\!\!\!\dif t_1\int_{x_l-1/N}^{x_l}\!\!\!\dif t_2  C_\beta \Big(\big(t_1, F_1(x_k,\cdot)\big),\big(t_2, F_1(x_l,\cdot)\big)\Big)+N^{-2}\mathcal{O}\left(N^{-1/2+\varepsilon}\right)\\
&=\int_{x_k-1/N}^{x_k}\!\!\!\dif t_1\int_{x_l-1/N}^{x_l}\!\!\!\dif t_2  C_\beta \Big(\big(t_1, F_1(t_1,\cdot)\big),\big(t_2, F_1(t_2,\cdot)\big)\Big)+N^{-2}\mathcal{O}\left(N^{-1/2+\epsilon}+N^{-\alpha}\right),
\end{split}
\label{eq:C_reg_application}
\end{equation}
for any fixed $\varepsilon>0$. Here we first applied Lemma~\ref{lem:C_reg} and then used that $F_1,F_2$ and $\alpha$-H{\"o}lder regular in the first variable. Summing up \eqref{eq:C_reg_application} over $k,l$, we finish the proof of \eqref{eq:cor_general_testfn}.

\end{proof}

\subsection{Two-body stability analysis}\label{sec:beta} In this section we collect the proofs of Lemma~\ref{lem:beta}, Lemma~\ref{lem:beta_perturb} and \eqref{eq:A_bound}. We often use the following asymptotics of $\rho_x^z$ from \cite[Eq.(3.15)]{spectral_radius}, which we restate here for convenience. For $0<\eta\lesssim 1$ it holds that 
\begin{equation}
\rho^z_x(\ii\eta)\sim \begin{cases}
(x-|z|^2)^{1/2}+\eta^{1/3},&|z|\le\sqrt{x},\\
\frac{\eta}{|x-|z|^2| + \eta^{2/3}},&|z|>\sqrt{x}. 
\end{cases}
\label{eq:rho_asymp}
\end{equation}

\begin{proof}[Proof of Lemma~\ref{lem:beta}] Consider the product of $\beta_\pm$ defined in \eqref{eq:beta_expl}.  A basic manipulation with \eqref{eq:m_cubic} shows that it can be written in the form
\begin{equation}
\beta_+\beta_- = x_1u_1u_2 |z_1-z_2|^2 + (1-x_1u_1)(1-x_1u_2) +x_1|\Im m_1|^2u_2 \frac{1-x_1u_1}{u_1} + x_1|\Im m_2|^2u_1 \frac{1-x_1u_2}{u_2},
\label{eq:beta_prod_id}
\end{equation}
for more details see \cite[Lemma~6.1]{macroCLT_complex}. Each of the terms in the rhs. of \eqref{eq:beta_prod_id} is non-negative. Since $u_1,u_2\sim 1$ by \eqref{eq:def_u} and \eqref{eq:rho_asymp}, the first term in the rhs. of \eqref{eq:beta_prod_id} is of order $|z_1-z_2|^2$. For the third term we have
\begin{equation}
x_1|\Im m_1|^2u_2 \frac{1-x_1u_1}{u_1} \sim |\Im m_1|^2 \frac{1-x_1u_1}{u_1} =|\Im m_1|^2 \frac{\eta_1}{\eta_1+x_1|\Im m_1|}\sim \rho_1\eta_1, 
\end{equation}
where in the last step we used that $\rho_1\gtrsim \eta_1$ by \eqref{eq:rho_asymp}. Similarly, for the fourth term in the rhs. of \eqref{eq:beta_prod_id} it holds that
\begin{equation}
x_1|\Im m_2|^2u_1 \frac{1-x_1u_2}{u_2} = x_1|\Im m_2|^2u_1 \frac{1-x_2u_2}{u_2} + x_1(x_2-x_1) |\Im m_2|^2 u_1\sim \rho_2\eta_2 + \rho_2^2(x_2-x_1).
\end{equation}
This finishes the proof of \eqref{eq:min_beta}.

To prove \eqref{eq:min_beta_outside}, consider $|z_j|\ge \sqrt{x_j}$ for $j=1,2$. We estimate the second term in the rhs. of \eqref{eq:beta_prod_id} from below as follows:
\begin{equation}
(1-x_1u_1)(1-x_1u_2) \ge (1-x_1u_1)(1-x_2u_2) \sim \frac{\eta_1\eta_2}{(\eta_1+\rho_1)(\eta_2+\rho_2)}\gtrsim \left(|z_1|^2-x_1\right)\left(|z_2|^2-x_2\right),
\end{equation}
where in the last bound we used the second line of \eqref{eq:rho_asymp}. This finishes the proof of Lemma~\ref{lem:beta}.
\end{proof}

\begin{proof}[Proof of Lemma~\ref{lem:beta_perturb}] The explicit formula \eqref{eq:beta_expl} implies that to prove \eqref{eq:beta_perturb} it suffices to show
\begin{equation}
\left\lVert \widehat{M}_j-M_j\right\rVert\lesssim \frac{|E_j|}{\eta_j},\quad M_j:=M_{x_j}^{z_j}(\ii\eta_j),\quad \widehat{M}_j:=M_{x_j}^{z_j}(E_j+\ii\eta_j),
\label{eq:M_perturb}
\end{equation} 
for $j=1,2$. Here we treat both $M_j$ and $\widehat{M}_j$ as $2\times 2$ matrices and thus omit in $M$-terms the superscript $(k)$ which indicates the matrix size. We also use the same convention for $\mathcal{S}_{x_j}$. Subtracting \eqref{eq:MDE} for $w=E_j+\ii\eta_j$ and $w=\ii\eta_j$ from each other, we obtain
\begin{equation}
\widehat{M}_j-M_j=E_j\left(I-\widehat{M}_j\mathcal{S}_{x_j}[\cdot]M_j\right)^{-1}[\widehat{M}_jM_j],
\label{eq:M_difference}
\end{equation}
where $I$ acts as identity on $\C^{2\times 2}$. Using that $\|M_j\|,\|\widehat{M}_j\|\lesssim 1$ by \eqref{eq:Mmu_bound} and employing the stability bound from \cite[Lemma~6.1]{macroCLT_real}, we bound the rhs. of \eqref{eq:M_difference} in operator norm from above by $|E_j|/\eta_j$ and finish the proof of Lemma~\ref{lem:beta_perturb}.
\end{proof}

\begin{proof}[Proof of Lemma~\ref{lem:trajectories}] For simplicity, we consider only the case $x=1$, while the general case can be reduced to this one by the rescaling \eqref{eq:M_rescaling}. Solving \eqref{eq:char_flow} explicitly, we get 
\begin{equation}
w_t =\ee^{(T-t)/2}w_T +\left(\ee^{(T-t)/2}-\ee^{-(T-t)/2}\right) m_T,\quad t\in [0,T].
\label{eq:wt_eplicit}
\end{equation}
Therefore, $|\Re w_t|\lesssim |\Re w_T|+|\Re m_T|$. We further have
\begin{equation}
\Re m_T = \Re m^{z_T}(w_T) = \Re \left[ m^{z_T}(w_T)- m^{z_T}(\ii\Im w_T)\right] =\mathcal{O}\left(|\Re w_T|\right) =\mathcal{O}\left(\eta_T\right).
\label{eq:Re_m}
\end{equation}
Here we omitted the subscript $x=1$ in $m^{z_T}$ for brevity. In the second step in \eqref{eq:Re_m} we used that $\Re m^z(w)=0$ for $\Re w=0$, and in the third step that $m^z(w)$ is Lipshitz in $w$ with $\Im w>0$ in the bulk regime. Combining the inputs presented above and using that $\eta_T<\eta_t$, we finish the proof of Lemma~\ref{lem:trajectories}.
\end{proof}

\begin{proof}[Proof of Lemma~\ref{lem:M2_bound}] We start with the proof of \eqref{eq:M2_bound1}. It holds that
\begin{equation}
\begin{split}
x_1\left\vert \langle G_1B_1\widetilde{G}_2B_2\rangle\right\vert =& (2N)^{-1} \left\vert \mathrm{Tr}\left[ G_1B_1P^*G_2PB_2\right]\right\vert\\ 
\le &(2N)^{-1}\left(\mathrm{Tr}\left[ G_1^*G_1B_1P^*PB_1^*\right]\right)^{1/2}\left( \mathrm{Tr}\left[ G_1^*G_2 PB_2B_1^*P^*\right]\right)^{1/2}\\
\le &\|B_1\|\|B_2\|\sqrt{x_1x_2}\langle G_1^*G_1\rangle^{1/2}\langle G_2^*G_2\rangle^{1/2} = \|B_1\|\|B_2\|\sqrt{\frac{x_1|\langle \Im G_1\rangle|}{\eta_1}\frac{x_2|\langle \Im G_2\rangle|}{\eta_2}}.
\end{split}
\label{eq:M2_bound1_proof}
\end{equation}
Here in the first bound we used the Cauchy-Schwarz inequality, and in the last step the Ward identity. Observe that the lhs. of \eqref{eq:M2_bound1_proof} is the deterministic approximation to the lhs. of \eqref{eq:M2_bound1}, and similarly for the rhs. of \eqref{eq:M2_bound1_proof} and \eqref{eq:M2_bound1}. Therefore, \eqref{eq:M2_bound1} follows from \eqref{eq:M2_bound1_proof} by a standard \emph{tensorization argument} (also known as \emph{meta argument}), see e.g. \cite[Section~2.6]{Najim16}, \cite[Proof of Lemma~D.1]{non-herm_overlaps} and \cite[Proof of Lemma~S2.6]{hyperuniformity}. 

Now we prove \eqref{eq:M2_bound2}. It suffices to establish this bound for $\Im w_1\Im w_2>0$, since in the complementary case the sign of $\Im w_1$ can be switched by the elementary identity
\begin{equation}
E_-^{(k_1)}G_{x_1}^{(k_1),z_1}(w_1)E_-^{(k_1)} = - G_{x_1}^{(k_1),z_1}(-w_1),
\label{eq:w_switch}
\end{equation}
for the proof see e.g. \cite[Eq.(7.52)]{hyperuniformity}. We further focus on the bound for the second term in the lhs. of \eqref{eq:M2_bound2}, while for the first term the argument is analogous. It follows from \eqref{eq:P_inverse} that
\begin{equation}
\left\langle M_{1\widetilde{2}}^BE_+\right\rangle = \frac{1}{\beta_+\beta_-} \left(\left(1+x_1\langle M_1E_-\widetilde{M}_2E_-\rangle\right)\langle M_1B \widetilde{M}_2\rangle - x_1 \langle M_1E_-\widetilde{M}_2\rangle \langle M_1B\widetilde{M}_2E_-\rangle\right),
\end{equation}
\label{eq:M2_bound2_proof}
where we additionally used that $\det \mathcal{P}=\beta_+\beta_-$ by \eqref{eq:beta_expl}. An explicit calculation based on \eqref{eq:def_u} and \eqref{eq:M_tilde} shows that
\begin{equation}
\langle M_1E_-\widetilde{M}_2\rangle = \ii\Im [\overline{z}_1z_2] u_1u_2 = \ii\Im [\overline{z}_1(z_2-z_1)]u_1u_2 = \mathcal{O}(|z_1-z_2|).
\end{equation}
Next, using the rescaling \eqref{eq:M_rescaling} we get that $1+x_1\langle M_1E_-\widetilde{M}_2E_-\rangle$ equals to
\begin{equation}
\begin{split}
&\frac{\sqrt{x_2}-\sqrt{x_1}}{\sqrt{x_2}} + \sqrt{\frac{x_1}{x_2}}\left(1+\left\langle M_1^{(k_1),z_1/\sqrt{x_1}}(w_1/\sqrt{x_1}) E_- M_1^{(k_1),z_2/\sqrt{x_2}}(w_2/\sqrt{x_2})E_-\right\rangle\right)\\
&\quad \lesssim \frac{|x_1-x_2|+|z_1-z_2|+|w_1|+|w_2|}{\min\{\rho_1,\rho_2\}^4}.
\end{split}
\label{eq:MEME_bound}
\end{equation}
To go from the first to the second line we used the calculation of $1+x_1\langle M_1E_-\widetilde{M}_2E_-\rangle$ in the case $x_1=x_2=1$ from \cite[Eq.(A.22)]{hyperuniformity}.

Combining \eqref{eq:M2_bound2_proof}--\eqref{eq:MEME_bound} we get
\begin{equation}
\left\vert\left\langle M_{1\widetilde{2}}^BE_+\right\rangle \right\vert \lesssim \left(|x_1-x_2|+|z_1-z_2|+|w_1|+|w_2|\right)\|B\|\frac{1}{|\beta_+\beta_-|}.
\label{eq:M2_prefin}
\end{equation}
From the bound on the imaginary axis \eqref{eq:min_beta} and the condition $|\Re w_j|\le c\eta_j$, for a sufficiently small $c>0$, we further have $|\beta_+\beta_-|\gtrsim \rho_1\eta_1+\rho_2\eta_2$. Together with \eqref{eq:M2_prefin} this finishes the proof of \eqref{eq:M2_bound2}.
\end{proof}

\begin{proof}[Proof of \eqref{eq:A_bound}] Using the explicit formula for $A$ from \eqref{eq:def_A} and recalling that $M$ has the $2\times 2$ block structure \eqref{eq:def_u}, we get
\begin{equation}
A=\left(1-x_1\left(m_1^2 +|z_1|^2u_1^2\right)\right)^{-1}M_1 = \left(\frac{w_1}{w_1+xm_1} - 2x_1m_1^2\right)^{-1}M_1.
\label{eq:A_proof}
\end{equation}
In the second identity we additionally used \eqref{eq:m_cubic}. Since $\Re m_1=0$, there is no cancellation between the two terms in the denominator of the rhs. of \eqref{eq:A_proof}. Using \eqref{eq:rho_asymp} to compute this denominator and estimating $\|M_1\|\lesssim 1$ by \eqref{eq:Mmu_bound}, we complete the proof of \eqref{eq:A_bound}.
\end{proof}

\subsection{Proof of Lemma~\ref{lem:determ_truncation}}\label{sec:truncation} In this proof, we use $\xi>0$ to denote a fixed exponent which may be taken arbitrarily small at the end. The precise value of $\xi$ may change from line to line. Lemma~\ref{lem:Girko_reduction} together with the definition of $I_c^{(k)}(f)$ from \eqref{eq:def_I_c} imply that it suffices to check that
\begin{equation}
\frac{k_1k_2}{(2\pi)^2} \int_\C \dif^2 z_1\int_\C \dif^2 z_2 \overline{\Delta f_1(z_1)}\Delta f_2(z_2) \int_{\eta_c}^{T}\dif\eta_1\int_{\eta_c}^{T}\dif \eta_2 \Cov\left(\langle G_{x_1}^{(k_1),z_1}(\ii\eta_1)\rangle,\langle G_{x_2}^{(k_2),z_2}(\ii\eta_2)\rangle\right) 
\label{eq:truncation1}
\end{equation} 
equals to the rhs. of \eqref{eq:determ_truncation}. First, we remove the edge regime in \eqref{eq:truncation1} where at least one of the parameters $z_j$ is close to $\partial\DD_{x_j}$. To this end, fix a small $\omega>0$ and consider the set of all $z_1,z_2\in \C$ such that $|z_j-\sqrt{x_j}|\le N^{-\omega}$ for some $j=1,2$. The contribution of this regime to \eqref{eq:truncation1} is bounded from above by
\begin{equation}
N^{2-\omega} \int_{\eta_c}^T\dif\eta_1 \int_{\eta_c}^T \dif \eta_2 \frac{N^\xi}{N^2\eta_1\eta_2} \lesssim N^{-\omega+\xi}.
\label{eq:truncation2}
\end{equation}
Here we estimated the covariance in \eqref{eq:truncation1} in absolute value by the single-resolvent local law \eqref{eq:1G_av}. The factor $N^{-\omega}$ comes from the volume of the domain of integration.

Next, consider the complementary regime, where $|z_j-\sqrt{x_j}|>N^{-\omega}$ for both $j=1$ and $j=2$. Denote for short  $G_j:=G_{x_j}^{(k_j),z_j}(\ii\eta_j)$ and observe that by the chiral symmetry of the spectrum of $H^{(k_j),z_j}$, $\overline{\langle G_j\rangle}=-\langle G_j\rangle$. Therefore,
\begin{equation}
\Cov\left(\langle G_1\rangle, \langle G_2\rangle\right) = -\E\langle G_1-\E G_1\rangle\langle G_2-\E G_2\rangle.
\label{eq:Cov_to_E}
\end{equation}
We apply \eqref{eq:Cov_UV} to the rhs. of \eqref{eq:Cov_to_E} and plug this formula into \eqref{eq:truncation1} (where the edge regime has been already removed). The integral of the error term in the rhs. of \eqref{eq:Cov_UV} has an upper bound of order 
\begin{equation}
N^2\int_\C \dif^2 z_1\int_\C \dif^2 z_2 |\Delta f_1(z_1)||\Delta f_2(z_2)|\int_{\eta_c}^T\dif\eta_1\int_{\eta_c}^T \dif \eta_2 \frac{N^{-\delta_c+D\omega+\xi}}{N^2\eta_1\eta_2}\lesssim N^ {-\delta_c+D\omega+\xi}\prod_{j=1}^2\|\Delta f_j\|_{L^2(\Omega)}.
\label{eq:truncation3}
\end{equation}
Here the factor $N^{-\delta_c}$ comes from the factor in the error term, and $N^{D\omega}$ arises from the $\theta$-dependence of the implicit constant in \eqref{eq:Cov_UV} with $\theta:=N^{-\omega}$. We further take $\omega<\delta_c/(2D)$, so that both \eqref{eq:truncation2} and \eqref{eq:truncation3} can be incorporated into the error term in the rhs. of \eqref{eq:determ_truncation}.

In such a way, we obtain that \eqref{eq:determ_truncation} holds with the only difference that the integration in the rhs. of \eqref{eq:determ_truncation} now runs over $\eta_j\in [\eta_c,T]$ and $|z_j-\sqrt{x_j}|>N^{-\omega}$, for $j=1,2$. It remains to show that the missing regimes are negligible. For the integral of $U_1U_2$ the argument is identical to the one in the proof of \cite[Lemma~4.7]{macroCLT_complex} and thus is omitted. To estimate the integral of $V_{12}$ in the regime $|z_1-\sqrt{x_1}|\le N^{-\omega}$, $z_2\in\C$, $\eta_1,\eta_2\in (0,+\infty)$, we first integrate $V_{12}$ over $\eta_1,\eta_2$ using \eqref{eq:V_to_Theta}. Then we use \eqref{eq:Theta_int_bound} with $\mathcal{A}_1$ being the set of $z_1\in\C$ such that $|z_1-\sqrt{x_1}|\le N^{-\omega}$, and obtain the upper bound of order $N^{-\omega/2}\|\Delta f_1\|_{L^2(\Omega)}\|\Delta f_2\|_{L^2(\Omega)}$. The integral in the regime $|z_2-\sqrt{x_2}|\le N^{-\omega}$ is estimated in the same way.

To estimate the tails of $\eta$-integration in the regime $|z_j-\sqrt{x_j}|>N^{-\omega}$, we show that
\begin{equation}
|V_{12}| \lesssim \frac{1}{\left\vert \beta_+\beta_-\right\vert^2}\prod_{j=1}^2\frac{1}{\left\vert |z_j|-\sqrt{x_j}\right\vert(\eta_j+1)^2},
\label{eq:V_bound}
\end{equation}
where $\beta_\pm$ are defined in \eqref{eq:beta_expl}. The proof of \eqref{eq:V_bound} is postponed to the end of this section. Consider first the case $|z_j|<\sqrt{x_j}-N^{-\omega}$, $j=1,2$. Combining \eqref{eq:V_bound} with \eqref{eq:min_beta}, we obtain
\begin{equation}
\int_0^{+\infty}|V_{12}|\dif \eta_2 \lesssim \frac{N^{3\omega}}{\left(|z_1-z_2|^2+\rho_1\eta_1\right)\left(\eta_1+1\right)^2}.
\label{eq:abs_V_int1}
\end{equation}
While \eqref{eq:V_bound} is stated only for $\eta_j\le L$, for some large $L>0$, it is in fact uniform in $\eta_j$, since e.g. for $\eta_2>L$ we have $|\beta_+\beta_-|\sim 1\sim \eta_2\rho_2$. Here the first equivalence follows from the discussion above Lemma~\ref{lem:beta} and the second from \eqref{eq:rho_asymp}. In \eqref{eq:abs_V_int1} we also used that $\rho_2\gtrsim N^{-\omega/2}$ for $\eta_2\lesssim 1$ by \eqref{eq:rho_asymp}. Next, we integrate \eqref{eq:abs_V_int1} over $\eta_1$:
\begin{equation}
\left(\int_0^{\eta_c}+\int_T^{+\infty}\right)\dif \eta_1 \int_0^{+\infty}\dif\eta_2 |V_{12}|\lesssim N^{4\omega} \left(\log \left(1+\frac{\eta_c}{|z_1-z_2|^2}\right) + T^{-1}\right).
\end{equation}
Using this bound and arguing similarly to \eqref{eq:Theta_int_bound} and \eqref{eq:K_difference}--\eqref{eq:sup_L2_bound}, we get
\begin{equation}
\begin{split}
&\int_\C\dif^2 z_1\int_\C\dif^2 z_2 \prod_{j=1}^2\left(\bm{1}\left\lbrace|z_j|\le \sqrt{x_j}-N^{-\omega}\right\rbrace |\Delta f_j(z_j)|\right)\left(\int_0^{\eta_c}+\int_T^{+\infty}\right)\dif \eta_1\int_0^{+\infty}\dif\eta_2 |V_{12}|\\
&\quad\lesssim N^{4\omega}\eta_c^{1/2-\varepsilon} \prod_{j=1}^2 \|\Delta f_j\|_{L^2(\Omega)},
\end{split}
\label{eq:abs_V_int2}
\end{equation}
for any fixed $\varepsilon>0$. Since $\eta_c=N^{-1+\delta_c}$ for a small $\delta_c>0$, the rhs. of \eqref{eq:abs_V_int2} can be incorporated into the error term in the rhs. of \eqref{eq:determ_truncation}, provided that $\omega>0$ is chosen sufficiently small. The bound on the tails of $\eta_2$-integration is identical to the one which we have just presented.

In the remaining $(z_1,z_2)$-regimes the argument is simpler. If $|z_i|<\sqrt{x_i}-N^{-\omega}$ and $|z_j|>\sqrt{x_j}+N^{-\omega}$ for $\{i,j\}=\{1,2\}$, \eqref{eq:min_beta} implies $|\beta_+\beta_-|\ge |z_1-z_2|^2\ge N^{-2\omega}$, which in combination with \eqref{eq:V_bound} gives the desired bound. Finally, in the case when $|z_j|>\sqrt{x_j}+N^{-\omega}$ for $j=1,2$, we use \eqref{eq:min_beta_outside} instead of \eqref{eq:min_beta}.

To complete the proof of Lemma~\ref{lem:determ_truncation}, it remains to verify \eqref{eq:V_bound}. We have from \eqref{eq:def_UV} and \eqref{eq:beta_expl} that
\begin{equation}
2V_{12} =\partial_{\eta_1}\partial_{\eta_2} \log \beta_+\beta_- = \frac{\partial_{\eta_1}\partial_{\eta_2} \left[\beta_+\beta_-\right]}{\beta_+\beta_-} - \frac{\partial_{\eta_1}\left[\beta_+\beta_-\right]\partial_{\eta_2}\left[\beta_+\beta_-\right]}{\left(\beta_+\beta_-\right)^2}.
\label{eq:log_beta_dif}
\end{equation}
To upper bound the derivatives of $\beta_+\beta_-$, we write this quantity in the form
\begin{equation}
\beta_+\beta_- = \left(1-x_1\langle M_1\widetilde{M}_2\right)\left(1+x_1\langle M_1E_-\widetilde{M}_2E_-\rangle\right) + x_1^2 \langle M_1\widetilde{M}_2E_-\rangle \langle M_1E_-\widetilde{M}_2\rangle,
\end{equation}
where we recalled the notations from \eqref{eq:M_tilde}, interpreted $M_1,\widetilde{M}_2$ as $2\times 2$ matrices and denoted by $E_-\in \C^{2\times 2}$ the diagonal matrix with $1,-1$ on the diagonal. We further observe that $\|M_1\|\lesssim (\eta_1+1)^{-1}$, $\|\widetilde{M}_2\|\lesssim (\eta_2+1)^{-1}$, as it follows from \eqref{eq:Mmu_bound} and \eqref{eq:MDE}. In combination with \eqref{eq:A_proof} and the second part of \eqref{eq:MA} this gives
\begin{equation}
\|\partial_{\eta_1}M_1\|\lesssim \left\vert |z_1|-\sqrt{x_1}\right\vert^{-1}\left(\eta_1+1\right)^{-2},\quad \|\partial_{\eta_2}\widetilde{M}_2\|\lesssim \left\vert |z_2|-\sqrt{x_2}\right\vert^{-1}\left(\eta_2+1\right)^{-2}.
\end{equation}
Together with \eqref{eq:log_beta_dif} this finishes the proof of \eqref{eq:V_bound}.\hfill\qed

\subsection{Proof of Wick's Theorem}\label{sec:Wick}

In this section we prove Theorem~\ref{theo:main} in full generality for any joint moments of centered linear statistics. Recall that in Section~\ref{sec:CLTproofs} we have already established this result for the second order moments. For simplicity of presentation we focus on the complex case. The real case requires only minor adjustments similar to the ones in Section~\ref{sec:Cov_real} and thus is omitted.

From Proposition~\ref{prop:underline_comp} we have that
\begin{equation}
\label{eq:covwick}
\E\langle G_1-\E G_1\rangle\langle G_2-\E G_2\rangle=\mathcal{M}\big((x_1,\eta_1,z_1),(x_2,\eta_2,z_2)\big)+\mathcal{O}\left(\frac{N^\xi}{\sqrt{N\eta_*}(N^2\eta_1\eta_2)}\right),
\end{equation}
where $\mathcal{M}$ stands for the rhs. of \eqref{eq:Cov_UV} (without the error term):
\begin{equation}
\mathcal{M}\big((x_1,\eta_1,z_1),(x_2,\eta_2,z_2)\big):= \frac{1}{2x_1x_2N^2}\left(V_{12}+\kappa_4 x_1^2 U_1U_2\right),\quad \text{for}\quad x_1\le x_2,
\label{eq:M_UV_def}
\end{equation}
and for $x_1>x_2$ the variables $(x_1,\eta_1,z_1)$ and $(x_2,\eta_2,z_2)$ in the rhs. of  \eqref{eq:M_UV_def} are interchanged. We point out that the error term in \eqref{eq:Cov_UV} is actually better than the one in \eqref{eq:covwick}. Here, and throughout this section, we use this worse error for notational shortness and simplicity, as it will not affect our final result. We now show that the $\langle G_i-\E G_i\rangle$ are asymptotically jointly Gaussian.
\begin{proposition}[Wick's Theorem]\label{prop:Wick} Assume the set-up and conditions of Proposition~\ref{prop:underline_comp} with the only differences that we additionally fix $n\in\N$, consider $j\in[n]$ instead of $j\in[2]$ and do not assume that $x_1\le x_2$. It holds that
\begin{equation}
\label{eq:wickprod}
\E\prod_{i=1}^n\langle G_i-\E G_i\rangle=\sum_{P\in \Pi_n}\prod_{\{i,j\}\in P}\!\!\mathcal{M}_{ij}+\mathcal{O}_\theta\left(\frac{N^{\xi}}{\sqrt{N\eta_*}}\prod_{i=1}^n \frac{1}{N\eta_i} \right),\quad\mathcal{M}_{ij}:=\mathcal{M}\big((x_i,\eta_i,z_i),(x_j,\eta_j,z_j)\big),
\end{equation}
uniformly in the same sense as explained below \eqref{eq:Cov_UV}. Here $\Pi_n$ denotes the set of all pairings of $[n]$. We use the convention that $\Pi_n=\emptyset$ if $n$ is odd.
\end{proposition}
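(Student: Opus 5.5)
We argue by induction on $n$, following the covariance computation in Section~\ref{sec:Cov_underline}; the cases $n=1$ (trivial) and $n=2$ (Proposition~\ref{prop:underline_comp}) are already available. For general $n$, we single out the factor with the smallest level, say $x_1=\min_i x_i$ after relabeling. Since only the indices are relabeled, the asymmetric roles of $x_1\le x_2$ in \eqref{eq:Cov_UV} are respected. We then write $\langle G_1-\E G_1\rangle=\langle G_1-M_1\rangle-\E\langle G_1-M_1\rangle$ and apply the identity \eqref{eq:G_underline}. The quadratic term $\langle\mathcal{S}[G_1-M_1](G_1-M_1)A\rangle$ is of order $N^{\xi}(N\eta_1)^{-2}$ by \eqref{eq:1G_av}. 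Multiplied by the remaining $n-1$ factors, each bounded by $N^\xi/(N\eta_i)$, it fits into the error term, because $(N\eta_1)^{-1}\le(N\eta_*)^{-1/2}$. The same applies to its expectation, and to the extra $N^{-1}\langle G_1^{\mt}A\rangle$ term in the real case. Hence
\[
\E\prod_{i=1}^n\langle G_i-\E G_i\rangle=-\E\Big[\langle \underline{W_1G_1A}\rangle\prod_{i\ge2}\langle G_i-\E G_i\rangle\Big]+\text{error}.
\]

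Next we open the underline: by \eqref{eq:def_under_1}, $\langle W_1G_1A\rangle\prod_{i\ge 2}(\cdots)$ equals the fully underlined product plus the Gaussian contractions, in which $\widetilde W_1$ is contracted against $\partial_{\widetilde W_1}$ acting on one factor $\langle G_j-\E G_j\rangle$, $j\ge2$. Because $x_1\le x_j$, $W_1$ is a minor of $W_j$, so as in \eqref{eq:underline_comp2}--\eqref{eq:usrel} each such contraction gives
\[
-\frac{\ii}{4x_jN^2}\sum_\sigma\sigma\,\partial_{\eta_j}\langle G_1AE_\sigma\widetilde G_jE_\sigma\rangle\prod_{i\ne1,j}\langle G_i-\E G_i\rangle .
\]
We replace the two-resolvent trace by its deterministic approximation via Proposition~\ref{prop:2G_randomD} and the Cauchy-integral argument \eqref{eq:dif_ll}. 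The replacement error is $N^\xi/(N\eta_1\eta_j^2)\cdot N^{-2}$, which, multiplied by the $n-2$ remaining factors, is within the error term. By Section~\ref{sec:Cov_determ}, the deterministic main part is exactly the Gaussian part of $\mathcal{M}_{1j}$. Using the induction hypothesis on $\E\prod_{i\ne1,j}$ then produces all pairings in which $1$ is paired with $j$ (Gaussian part). There is one subtlety: $\langle G_1AE_\sigma\widetilde G_jE_\sigma\rangle$ is random, and we must make sure its fluctuation, multiplied by centered factors, is small. This holds since \eqref{eq:2G_av} is a high-probability bound, so we can replace it by its deterministic approximation before taking the expectation.

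The main obstacle is the fully underlined term. Its expectation vanishes in the Gaussian case, and in general we perform the cumulant expansion \eqref{eq:Cov_cum_exp} with third and higher cumulants acting on $(G_1A)_{ba}\prod_{i\ge2}\langle G_i-\E G_i\rangle$. We need to show three things. First, the $\ell=3$ and $\ell\ge5$ terms are bounded by $N^{-1/2}\prod_i (N\eta_i)^{-1}$ up to $N^\xi$. Second, terms where derivatives fall on two or more different factors $\langle G_i-\E G_i\rangle$ gain an extra $N^{-1}$ each. Third, the leading $\kappa_4$ term pairs $1$ with a single $j$ exactly as in \eqref{eq:cum_exp_kappa4}--\eqref{eq:cum_exp_kappa4_determ}, producing $\kappa_4x_1^2U_1U_j/(2x_1x_jN^2)$ times the remaining expectation. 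Each derivative of a centered factor removes its centering but costs only an isotropic entry bound \eqref{eq:1G_iso}; we plan to transfer the estimates of \cite[Eq.(7.14)-(7.24)]{hyperuniformity}, using that the derivative in $w_{ab}$ with $(a,b)\in\mathcal{J}_1$ of $\langle G_i\rangle$ is $N^{-1}$ times an entry of $G_i^2$ at indices lying in the $k_1$-block of size $k_i$. Where a derivative leaves an uncentered factor $\langle G_i-\E G_i\rangle$, we apply the induction hypothesis (with its a priori size $\prod(N\eta_i)^{-1}$) to close the estimate. Collecting both contributions gives the sum over pairings containing $\{1,j\}$ for all $j$, which is \eqref{eq:wickprod}; odd $n$ yields only error terms.
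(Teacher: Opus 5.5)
Your overall architecture matches the paper's: induction on $n$, choosing $x_1=\min_i x_i$ so that $W_1$ is a minor of every $W_j$, getting $V_{1j}$ from the Gaussian contraction via Proposition~\ref{prop:2G_randomD} and Section~\ref{sec:Cov_determ}, and getting $U_1U_j$ from the $\kappa_4$ term. However, your first reduction has a genuine gap.

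\textbf{The dropped term.} Set $F:=\prod_{i\ge2}\langle G_i-\E G_i\rangle$. Then \eqref{eq:G_underline} gives
\[
\E\prod_{i=1}^n\langle G_i-\E G_i\rangle=-\E\big[\langle\underline{W_1G_1A}\rangle F\big]+\E\langle\underline{W_1G_1A}\rangle\,\E F+\text{error},
\]
and you silently dropped the second term. For $n=2$ it vanishes because $\E F=0$, which is exactly what \eqref{eq:underline_comp1} uses. For $n\ge3$ it does not vanish.

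In the non-Gaussian case, $\E\langle\underline{W_1G_1A}\rangle$ is the usual $\kappa_4$-correction to $\E\langle G_1\rangle-\langle M_1\rangle$. Its fourth-cumulant term, built from all-diagonal resolvent entries, is of order $\kappa_4/N$ and is generically nonzero. Since $|\E F|$ is generically of order $\prod_{i\ge2}(N\eta_i)^{-1}$, the dropped term has size $\eta_1\prod_{i}(N\eta_i)^{-1}$. This exceeds the claimed error $N^\xi(N\eta_*)^{-1/2}\prod_i(N\eta_i)^{-1}$ whenever $\eta_1\sqrt{N\eta_*}\gg N^\xi$; for $n=3$ with all $\eta_i\sim1$ it exceeds it by a factor $\sqrt N$. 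So your displayed reduction is false for $n\ge3$.

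\textbf{The mirror-image omission.} Your three items for the fully underlined term do not cover the cumulant terms in which all derivatives fall on $(G_1A)_{ba}$ and none on $F$. For $n=2$ these are covariances with a single centred factor, hence small. For $n\ge3$, the $\ell=4$ such term is to leading order $\E[Y]\,\E F$, where $Y$ is the corresponding sum over diagonal resolvent entries. It has the same non-negligible size as above and is not of Wick form, so it does not fit into any of your three items.

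\textbf{The missing idea.} These two contributions cancel. The terms in which derivatives hit only $G_1A$ sum to $\E[YF]$, and $\E Y=\E\langle\underline{W_1G_1A}\rangle$ up to $\mathcal{O}(N^{-D})$. Together with the dropped term they therefore leave $-\E[(Y-\E Y)F]$. This is negligible because $Y-\E Y$ is a fluctuation: each diagonal entry deviates from its deterministic limit by $\mathcal{O}_\prec((N\eta_1)^{-1/2})$. This is why the paper keeps $+\E\langle\underline{W_1G_1}A_1\rangle$ inside \eqref{eq:cumexpw} and records the cancellation below \eqref{eq:longexpr}.

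With this bookkeeping restored your argument goes through. As written, it rests on two false steps whose errors happen to cancel.
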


Using Proposition~\ref{prop:Wick} we conclude the proof of Theorem~\ref{theo:main} in the same way as \eqref{eq:Cov_aim} was derived from Proposition~\ref{prop:underline_comp} in Section~\ref{sec:CLTproofs}. Thus, it remains to prove Proposition~\ref{prop:Wick}.

\begin{proof}[Proof of Proposition~\ref{prop:Wick}]
To keep the notation simpler, we assume that $\theta\sim 1$, which is equivalent to the assumption $||z_i|-\sqrt{x_i}|\gtrsim 1$, $i\in[n]$. It can be readily seen from the proof that all bounds will deteriorate only by a factor $\theta^{-D}$ for some large fixed $D>0$, when considering $||z_i|-\sqrt{x_i}|\ge \theta$.

Applying \eqref{eq:G_underline} to $G_i$ and using the single-resolvent local law \eqref{eq:1G_av} to estimate the first term in the rhs. of \eqref{eq:G_underline}, we get
\begin{equation}
\langle G_i-\E G_i\rangle=-\langle\underline{W_iG_i}A_i\rangle+\E\langle\underline{W_iG_i}A_i\rangle+\mathcal{O}\left(\frac{N^\xi}{(N\eta_i)^2}\right),\qquad A_i:=\frac{M_i}{1-x_i\langle M_i^2\rangle}.
\label{eq:G_underline_i}
\end{equation}
Here we additionally used that $\|A_i\|\lesssim 1$ in our regime of interest by \eqref{eq:A_bound}, and recalled the notation $W_i:=W^{(k_i)}$. We now proceed by induction on $n$. For $n=1$ \eqref{eq:wickprod} trivially follows from \eqref{eq:1G_av}, while the case $n=2$ is covered by Proposition~\ref{prop:underline_comp}. Now, fix an even $n\in\mathbf{N}$ and assume that \eqref{eq:wickprod} holds if the number of resolvents is at most $n-2$. Set $L\in \{n-1, n\}$ and consider the product $\prod_{i\in [L]} \langle G_i-\E G_i\rangle$. We label the resolvents in such a way that $x_1$ is the smallest of all $\{x_i\}_{i\in[n]}$. Applying \eqref{eq:G_underline_i} to the first factor and performing the cumulant expansion in $\langle\underline{W_1G_1}A_1\rangle$ similarly to \eqref{eq:Cov_cum_exp}, we get that this product equals to
\begin{equation}
\label{eq:cumexpw}
\E\sum_{ab}\!\left(\!\! -\frac{1}{N}\langle \Delta^{ab}G_1A_1\rangle\partial_{ab}\!-\!\sum_{l=2}^R\sum_{{\bm \alpha}\in\{ab,ba\}^l}\!\frac{\kappa(ab,{\bm \alpha})}{l! N^{(l+1)/2}}\partial_{{\bm\alpha}}(\langle \Delta^{ab}G_1A_1\rangle\,\cdot\,)+\E\langle\underline{W_1G_1}A_1\rangle\right)\!\!\prod_{i=2}^L\langle G_i-\E G_i\rangle,
\end{equation}
up to the additive error term $\mathcal{O}(N^{-D})$, for any fixed $D>0$ and for sufficiently large fixed $R\in\N$. In \eqref{eq:cumexpw} the first summation runs over $a,b\in\mathcal{I}_1$ defined in \eqref{eq:def_IJ}. Additionally, for any function $f(W)$ in \eqref{eq:cumexpw} we introduced the notation
\[
\partial_{{\bm\alpha}}(\langle \Delta^{ab}G_1A_1\rangle\,\cdot\,)f(W):=\partial_{{\bm\alpha}}\big(\langle \Delta^{ab}G_1A_1\rangle f(W)\big).
\]
The first term in \eqref{eq:cumexpw} comes from the second order cumulant contribution in the cumulant expansion. Note that the derivative $\partial_{ab}$ hits only the product of $\langle G_i-\E G_i\rangle$, $i\in [2,L]$, but not $G_1$.  Meanwhile, the terms in the $l$-summation come from higher order cumulants, and the derivatives there may hit any $G_i$ including $G_1$. As we will shortly see, after all differentiations in \eqref{eq:cumexpw} are performed, the second order cumulant term gives rise to the quantities of the form $V_{1j}$ (see the second line of \eqref{eq:def_UV} with 2 replaced by~$j$), and the fourth order terms ($l=3$) yield $U_1U_j$, the rest is negligible.

It can be easily seen (see e.g. \cite[Section 6]{macroCLT_complex}) that all the terms in the $l$-summation in \eqref{eq:cumexpw} with $l\ne 3$ are bounded by
\[
\mathcal{E}_{[L]}:=\frac{N^\xi}{\sqrt{N\eta_*}}\prod_{i=1}^L\frac{1}{N\eta_i},
\]
and so negligible. This is a consequence of the fact that in all these terms it appears at least an off-diagonal term of the resolvent which is small by the isotropic local law in Theorem~\ref{theo:1Gllaw}. For the same reason the only term for $l=3$ which is not negligible is when all the resolvent entries are diagonal. We are thus left with (see \eqref{eq:usrel} for the first term below and \eqref{eq:cum_exp_kappa4} for the second)
\begin{equation}
\begin{split}
\label{eq:longexpr}
\eqref{eq:cumexpw}=&-\frac{\ii}{4N^2}\E\sum_{p=2}^L \frac{1}{x_p}\left(\sum_{\sigma\in\{\pm\}} \sigma\partial_{\eta_p}\left\langle G_1A_1E_\sigma^{(k_1)} \widetilde{G}_pE_\sigma^{(k_1)}\right\rangle\right)\E\prod_{i=2 \atop i\ne p}^L\langle G_i-\E G_i\rangle \\
&-\frac{1}{2x_1N}\sum_{(a,b)\in\mathcal{J}_1} \frac{\kappa_4}{N^2} \E \partial_{ba} [(G_1A_1)_{ba}]\partial_{ab}\partial_{ba} \left[ \langle G_p\rangle -\E\langle G_p\rangle\right] \E\prod_{i=2 \atop i\ne p}^L\langle G_i-\E G_i\rangle+\mathcal{O}\left(\mathcal{E}_{[L]}\right).
\end{split}
\end{equation}
Here we denoted $\widetilde{G}_p:=P_p^*G_pP_p$, where $P_p$ is defined as in \eqref{eq:defP}, but with $k_2$ replaced by $k_p$. The index set $\mathcal{J}_1$ is defined in \eqref{eq:def_IJ}. In \eqref{eq:longexpr} we also used that the term for $l=3$ when all the three derivatives hit $\langle \Delta^{ab}G_1A_1\rangle$ (which consists of all diagonal resolvent entries) is cancelled by the analogous term in cumulant expansion of $\E\langle\underline{W_1G_1}A_1\rangle$. In fact, all the terms when the derivatives hit only $\langle \Delta^{ab}G_1A_1\rangle$ are cancelled by their counterpart in $\E\langle\underline{W_1G_1}A_1\rangle$.

Finally, proceeding analogously to the computations of Section~\ref{sec:compcov}, we compute that the rhs. of \eqref{eq:longexpr} equals to
\begin{equation}
\sum_{p=2}^n \mathcal{M}_{1p}\,\E\prod_{i=2 \atop i\ne p}^L\langle G_i-\E G_i\rangle+\mathcal{O}\left(\mathcal{E}_{[L]}\right).
\label{eq:Wick_fin}
\end{equation}
Since the number of resolvents in \eqref{eq:Wick_fin} is at most $n-2$, we use the inductive hypothesis and finish the proof of \eqref{eq:wickprod}.
\end{proof}

\bibliographystyle{plain} 
\bibliography{refs}

\end{document}